\pdfoutput=1	
\documentclass[
	final,	
	bigMargin = false,
	font = libertine,
	useBibTeX = true
	]{MyClass}
\usepackage{amscd}

\newtheoremstyle{theorem}
  {3pt}
  {3pt}
  {\itshape}
  {}
  {\scshape}
  {:}
  {.5em}
  {}

\theoremstyle{theorem}
\newtheorem{thm}{Theorem}[section]
\crefname{thm}{Theorem}{Theorems}

\newtheorem{lem}[thm]{Lemma}
\newtheorem{prop}[thm]{Proposition}
\newtheorem{comp}[thm]{Complement}
\theoremstyle{definition}

\newtheorem{defn}[thm]{Definition}

\theoremstyle{remark}
\newtheorem{rema}[thm]{Remark}

\def\id{\mathrm{Id}}
\def\Hol{\mathrm{Hol}}
\def\Hom{\mathrm{Hom}}
\def\Ho{\mathrm{H}}
\def\cchi{\chi}
\def\cchiz{\chi}
\def\QQQ{{Q_\PP}}
\numberwithin{equation}{section}

\def\lift{\lambda}
\def\hh{\mathfrak h}
\def\wilson{\rho}
\def\wilsonn{\rho_{\flat}}

\def\prin{\xi}

\def\mani{M}
\def\form{\mathcal A}

\def\GG{G}
\def\gg{\mathfrak g}

\def\ZZZ{H}
\def\PP{P}

\def\zzz{\mathfrak h}

\def\conn{A}

\def\adx{\mathrm{ad}_{\prin}}

\def\pois{\{\,\cdot\, , \,\cdot\, \}}
\def\Stab{Z}

\def\curv{\mathrm{curv}}
\def\ggcchi{\gg}
\def\vol{\mathrm{vol}}
\def\wuw{u}
\def\pio{\pi_1}
\def\MZ{\mathbb Z}
\def\vS{\vol_{\Sigma}}
\def\Hom{\mathrm{Hom}}
\def\GR{\Gamma_{\mathbb R}}

\def\GGGG{M}
\def\MS{M_{\Sigma}}
\def\wMS{\widetilde M_{\Sigma}}
\def\DDD{\Delta}
\newcommand{\T}{\mathrm{T}}

\newcommand{\ba}{\begin{eqnarray}}
\newcommand{\ea}{\end{eqnarray}}

\newcommand{\Conn}{\mathcal A_{\prin}}
\newcommand{\Gau}{\mathcal{G}_{\prin}}

\newcommand{\gau}{\liea{gau}_{\prin}}
\newcommand{\GauQ}{\mathcal{G}_{\prin,Q}}
\newcommand{\GauA}{\mathcal{G}_{\prin,A}}

\newcommand{\defeq}{:=}
\renewcommand{\iref}[1]{\ref{#1}}
\newcommand{\liea}[1]{\mathfrak{#1}}
\newcommand{\set}[1]{\{#1\}}
\newcommand{\equivClass}[1]{#1}

\newcommand{\LeftTrans}{\mathrm{L}}     
    
\newcommand{\AdAction}{\mathrm{Ad}}    
     
\renewcommand{\Conj}{\mathrm{Ad}}

\usepackage{xparse}
\NewDocumentCommand{\field}{m}{\mathbb{#1}}			
\NewDocumentCommand{\R}{}{\field{R}}

\newcommand\dif{\mathop{}\!\mathrm{d}}		

\newcommand{\TBundle}{\mathrm{T}}
\newcommand{\tangent}{\mathrm{T}}

\DeclarePairedDelimiter\norm{\lVert}{\rVert}

\newcommand\normDot{\norm{\,\cdot\,}}

\Title{\vspace{-5ex}\Large Yang-Mills moduli spaces over an orientable closed surface via Fr\'echet reduction}

\Author[
	affiliation = mpiAndItp,
	email = {tobias.diez@itp.uni-leipzig.de}
]{diez}{T. Diez}

\Author[
	affiliation = lille,
	email = {Johannes.Huebschmann@math.univ-lille1.fr}
]{huebschmann}{J. Huebschmann}

\Institution{mpiAndItp}{%
	Max Planck Institute for Mathematics in the Sciences
	Inselstra{\ss}e 22, 
	04103 Leipzig, Germany
	and
	Institut f\"ur Theoretische Physik,
	Universit\"at Leipzig,
	Postfach 100 920,
	04009 Leipzig, Germany%
}

\Institution{lille}{%
	USTL, UFR de Math\'ematiques,
	CNRS-UMR 8524, 
	Labex CEMPI (ANR--11--LABX--0007--01), 
	59655 Villeneuve d'Ascq Cedex, France%
}

\Keywords{Yang-Mills connection on a closed Riemann surface,
moduli space of
smooth Yang-Mills connections on a closed Riemann surface
in the Fr\'echet setting,
stratified symplectic structure on the moduli space of
Yang-Mills connections on a closed Riemann surface,
moduli space of holomorphic vector bundles on a projective curve,
Fr\'echet slice analysis}

\MSC{Primary:  58E15,  81T13; Secondary: 14D20, 53C07, 58D27}

\Abstract{Given a principal bundle on an orientable closed surface 
with compact connected structure group,
we endow the space of based gauge equivalence classes of
smooth connections relative to smooth based
gauge transformations with the structure of a Fr\'echet
manifold. Using Wilson loop holonomies and a certain characteristic class
determined by the topology of the bundle, we then impose  
suitable constraints
on that Fr\'echet manifold
that single out the
based gauge equivalence classes of central Yang-Mills connections
but do not directly involve the Yang-Mills equation.
We also explain how our theory yields 
the 
based and unbased gauge equivalence classes of all Yang-Mills connections
and deduce the stratified symplectic structure
on the space of unbased gauge equivalence classes
of central Yang-Mills connections.
The crucial new
technical tool is a slice analysis in the Fr\'echet setting.
}

\begin{document}
\maketitle
\enlargethispage{3.4\baselineskip}
\tableofcontents

\section{Introduction}

We rework and extend,
in the framework of
Fr\'echet manifolds,
 the approach of Atiyah and Bott~\cite{atibottw} 
to the moduli spaces of Yang-Mills connections on a closed Riemann surface.

Let $\GG$ be a compact connected Lie group 
and $\mani$  a Riemannian manifold, 
and
let $\prin \colon \PP \to \mani$
be a principal $G$-bundle.
Yang-Mills theory proceeds by constructing
moduli spaces of solutions of the relevant
partial differential equations
modulo gauge transformations.
In the case of a manifold $M$ of arbitrary finite dimension,
smooth solutions need not exist and, even if they exist,
constructing solutions is a rather delicate endeavor.
For example, Uhlenbeck established the local solvability in the 
Coulomb gauge \cite{MR648356}. 
The main difficulty in Yang-Mills theory
resides, perhaps, in the fact that
the space of connections is too big, with too much unwanted
gauge freedom or gauge ambiguity.

We will here concentrate on the case
where the base manifold $M$ is an orientable closed (compact) 
Riemann surface, and 
 we write this surface as
$\Sigma$
and denote by $\vol_{\Sigma}$
a (suitably normalized) volume form on $\Sigma$.
The situation now greatly simplifies.
Indeed, 
the central Yang-Mills connections $\conn$
on $\prin$ (the Yang-Mills connections having central curvature)
are characterized by the equation
\begin{equation}
\curv_{\conn}=-X_\prin\cdot\vol_{\Sigma},
\end{equation}
for a certain characteristic class $X_\prin$
in the center $\hh$ of the Lie algebra $\gg$ of $\GG$
determined by the topology of the bundle $\prin$.
Concerning the sign, see \cref{univex} below.
In a fundamental paper~\cite{atibottw},
Atiyah and Bott 
 showed that, in the case at hand, the Yang-Mills solution space
decomposes into a disjoint union of solution spaces 
of central Yang-Mills connections
for reductions of the structure group.
Furthermore, they described the structure of such a central Yang-Mills 
solution space
using infinite dimensional techniques including
symplectic reduction phrased in terms
of suitable Sobolev spaces.
They emphatically raised the issue of understanding the singularities
of this kind of space.
In the presence of singularities, the concept of a stratified symplectic space
due to \cite{MR1127479}
satisfactorily isolates
the singular structure of a space that arises
by symplectic reduction in finite dimensions.
This approach has so far not been available for 
a space which results from an infinite-dimensional construction,
such as
the solution space of central Yang-Mills connections.
In particular, 
a rigorous theory of Poisson brackets in infinite dimensions
is lacking
and functional analytic issues obscure the underlying geometry.
	
We circumvent these technical issues and reduce 
the analysis of the singularities of the moduli space of central 
Yang--Mills connections to a finite-dimensional problem.
	As a new approach to the issue of gauge freedom, we perform 
reduction by stages, first relative to based gauge transformations
and thereafter relative to the residual action of the structure group.
We deliberately write \lq\lq reduction by stages\rq\rq\ 
rather than  \lq\lq symplectic reduction by stages\rq\rq,
since
there is no obvious momentum mapping
for the group of based gauge transformations.
We therefore take the entire orbit space
relative to the action,
on the  space of smooth connections,
of  the Fr\'echet Lie group of based gauge transformations.
We show that this orbit space is a Fr\'echet manifold.
We then show that the Wilson loop mapping developed
by the second-named author in~\cite{MR1600534}
yields a smooth map
from the 
 Fr\'echet manifold
of based gauge equivalence classes of connections
to the product manifold $\GG^{2\ell}$ 
(technically the space of $\GG$-valued homomorphisms 
defined on the free group $F$ on a family of $2\ell$
chosen generators of the fundamental group of $\Sigma$)
where $\ell$ denotes the genus of $\Sigma$.
We then impose,
in terms of the holonomies around all contractible loops bounded by a disk,
 suitable constraints 
on that Fr\'echet manifold
which single out the based gauge equivalence classes
of smooth central Yang-Mills connections.
These constraints recover the central curvature condition
in \cite{atibottw}
 and thereby yield a 
replacement for the Yang-Mills equation.
We show that 
the Wilson loop mapping
(more precisely, the map \eqref{singleout2} below)
yields a homeomorphism 
between the space
of based gauge equivalence classes of smooth 
connections satisfying the appropriate constraint
(equivalently: space of based gauge equivalence
classes of smooth central Yang--Mills connections)
endowed with its Fr\'echet topology
and a suitable space of homomorphisms into the structure group $\GG$,
realized within the product manifold $\GG^{2\ell}$.
We also show that, relative to
the $\GG$-orbit stratifications,
 the homeomorphism under discussion
is an isomorphism of stratified spaces
which, on each stratum,
restricts to a diffeomorphism.
See \cref{singleout} 
below for details.

A crucial step in the proof of \cref{singleout} consists in establishing
the fact that the Wilson loop mapping 
from that space of based gauge equivalence classes 
to that space of homomorphisms into the structure group $G$
is a local homeomorphism.
The continuity of that Wilson loop mapping is a consequence
of the smoothness of the Wilson loop mapping defined on the ambient space.
A construction in ~\cite[Section~6]{atibottw}
yields the inverse of the map under discussion, see \cref{novel} below.
While, on the one hand, the continuity of that inverse
map is a consequence
of Uhlenbeck compactness \cite{MR648356},
that compactness theorem relies on Sobolev space techniques.
We avoid these techniques and develop a proof 
that merely involves 
Fr\'echet space techniques. The proof we give includes the statement of
the compactness theorem  for our Fr\'echet space situation
over a surface. See  \cref{ubc} and \cref{lem2} below.

Our basic tool is a slice analysis technique 
for groups of smooth gauge transformations in the Fr\'echet setting.
This technique was developed in~\cite{AbbatiCirelliEtAl1989}
and~\cite{MR849796}
and has thereafter been extended by the first-named author
in~\cite{diez2013}.
It is
phrased within
the differential calculus due to
Michal and Bastiani, see~\cite{Neeb2006} for  further remarks
concerning the differential calculus on locally convex spaces,
and
crucially involves the version of the Nash--Moser inverse function theorem 
given in~\cite{MR656198} in the tame Fr\'echet setting.
Below we reproduce that result as \cref{prop::locallyConvexSpace:NashMoserInverseTheorem}.
This differential calculus, in turn,
has since been applied to
global analytic problems 
in~\cite{MR656198},~\cite{Milnor1984},~\cite{Neeb2006} and elsewhere.
The slice analysis technique
enables us to reconstruct, in our framework,
the stratified symplectic structure
on such a  Yang-Mills solution space
developed previously~\cite{MR1370113},~\cite{MR1277051}.
The upshot of the present paper is that
the theory developed in~\cite{MR1363857},
\cite{MR1369463}, and \cite{MR1600534} -- see~\cite{MR1938554} for 
a leisurely introduction -- can
be built in terms of the appropriate spaces
of \emph{smooth} connections.
Pushing a bit further, we endow the
Fr\'echet manifold of based gauge equivalence classes of connections
with a quasi-hamiltonian $G$-space structure
relative to the structure group $G$, and
the Yang-Mills moduli space we are interested in
then arises by reduction relative to $G$.
See \cref{further} below for details.
This observation, together
with the idea of imposing suitable constraints
on the space of based gauge equivalence classes of smooth connections,
justifies, perhaps, the
term {\em Fr\'echet reduction\/} in the title.
The analysis of the topology of the Yang-Mills solution spaces
in~\cite[Section 6]{atibottw}
enables us to reconstruct,
entirely in terms of Fr\'echet topologies on spaces
of smooth connections,
 the moduli space
of all Yang-Mills connections
on the fixed bundle $\prin$
and, furthermore,
that of all Yang-Mills connections
relative to $G$
over the surface $\Sigma$.
See \cref{all} for details.
Thus, we can understand
the Yang-Mills solution spaces
over a closed surface
in terms of spaces
of ordinary smooth connections.
This raises the issue whether 
we can understand the Yang-Mills solution spaces
over a general compact Riemannian manifold
in terms of spaces
of smooth connections.

For a general gauge theory situation,
an alternate model of the space of based holonomies
was developed by the second-named author in~\cite{MR1670408}.
That model 
yields a rigorous approach to lattice gauge theory.
For the special case explored in the present paper,
the alternate model comes down to the extended moduli space construction
 developed in~\cite{MR1277051},~\cite{MR1370113},~\cite{MR1470732},
see \cref{strati} below.
The construction
in the present paper is somewhat  a special case of
that in~\cite{MR1670408}, but now in the framework of
Fr\'echet manifolds.

In \cref{univex}, following \cite{atibottw},
 we spell out the universal example
in terms of the (necessarily unique) Schur cover of  the fundamental group
of $\Sigma$,
tailored to our purposes and,
in \cref{top1}, we recall 
the requisite 
topological classification of principal bundles over $\Sigma$.
In \cref{frechet} we introduce the Fr\'echet manifold structure
on the space of based equivalence classes of connections.
In \cref{wilson} we 
explore the smoothness of the Wilson loop mapping
and in \cref{preparing} we establish a technical result.
In \cref{constraints} we then proceed to
describe the constraints and to
 spell out and prove the 
main result, \cref{singleout}, 
and we complete
the proof 
in \cref{dep}.
In \cref{all}
we explain how we can recover,
entirely in terms of Fr\'echet topologies on spaces
of smooth connections,
 the moduli space
of all Yang-Mills connections relative to the structure group $G$.
In \cref{strati} we  briefly recall the resulting
singular symplectic geometry, phrased in the language of 
stratified symplectic spaces, 
and the final section, \cref{frechetslices}, is devoted to the requisite
Fr\'echet space technology.

\section{The universal example}
\label{univex}

We maintain the notation $G$ for a compact connected Lie group.
Our base manifold is an orientable 
(real) closed (compact) connected
surface $\Sigma$ of genus $ \geq 1$.
Our notation for the differential forms on a smooth manifold $M$ is
$\form(M, \,\cdot\,)$.

Ordinary Yang-Mills theory necessitates a choice
of Riemannian metric on the structure group
and on the base manifold.
Our approach does not involve
such metrics except when we link it to
ordinary Yang-Mills theory, and a choice of complex structure 
of $\Sigma$
will play no role either, except possibly for the
orientation it determines.
As a side remark we note that the Yang-Mills equation $\dif_A\ast\curv_A=0$
makes sense for 
a principal bundle having as structure group a general Lie 
group when we interpret 
the operator $\ast$
as having its values in the
forms with values in the dual to the adjoint bundle.

Let $\ell\, (\geq 1)$ denote the genus of $\Sigma$,
let $Q$ be a point of $\Sigma$, 
fixed throughout and taken henceforth as base point,
and let
$\pio=\pi_1(\Sigma,Q)$ denote 
the fundamental group of $\Sigma$
at the point $Q$.
Consider the standard presentation
\begin{equation}
\big\langle x_1,y_1,\dots, x_\ell,y_\ell; r\big\rangle,
\quad
r = \left[x_1,y_1\right] \cdot
\ldots
\cdot
\left[x_\ell,y_\ell\right]
\label{3.1}
\end{equation}
of $\pio$.
Let $u_1,v_1,\dots,u_\ell,v_\ell$
be a {\em canonical system\/} (in the sense of \cite{MR606743}) 
of (closed) curves
in $\Sigma$
that have $Q$ as starting point; that is to say:
Cutting $\Sigma$ successively along 
these curves yields a planar disk $e^2$;
see, e.g., \cite[\S 3.3.2]{MR606743} for details.
We suppose things arranged in such a way that these curves
represent the respective generators
$x_1,y_1,\dots,x_\ell,y_\ell$
of the fundamental group $\pio=\pi_1(\Sigma,Q)$ of $\Sigma$
and that the boundary path of the 
disk $e^2$ yields the relator $r$.
Thus the standard cell decomposition of
$\Sigma$ with a single 2-cell $e^2$
corresponding to $r$ results. 

We denote by $F$ the free group on
$x_1,y_1,\dots, x_\ell,y_\ell$ and by $N$ the normal closure
of $r$ in $F$. Consider the quotient group $\Gamma = F\slash [F,N]$.
The image $[r] \in \Gamma$ of $r$ 
generates the central subgroup 
$\mathbb Z\langle [r]\rangle =N\slash [F,N]$ of $\Gamma$,
 and
the resulting extension
\begin{equation}
\begin{CD}
0
@>>>
\mathbb Z\langle [r]\rangle
@>>>
\Gamma
@>>>
\pio
@>>>
1
\end{CD}
\label{3.2}
\end{equation}
is central.
Since the transgression homomorphism
$\Ho_2(\pio) \to \MZ\langle [r]\rangle$
is an isomorphism, the extension \eqref{3.2}
is a maximal stem extension (Schur cover) and since,
furthermore, the abelianization of $\pio$ 
is a free abelian group,
that maximal stem extension is unique
up to within isomorphism \cite[\S 9.9 Theorem 5 p.~214]{MR0279200}.
Atiyah and Bott use the terminology 
\lq\lq universal central extension\rq\rq\ 
to refer to this situation
\cite[\S 6]{atibottw}.

Relative to the embedding 
$\mathbb Z\langle [r]\rangle \to \mathbb R$ of 
abelian groups
given by the assignment to $[r]$ of $2 \pi\in \mathbb R$, 
similarly as in~\cite[\S 6 p.~560]{atibottw},
let $\GR$ denote
the 1-dimensional Lie group characterized by the requirement that
\begin{equation}
\begin{CD}
0
@>>>
\mathbb Z\langle [r]\rangle
@>>>
\Gamma
@>>>
\pio
@>>>
1
\\
@.
@VVV
@VVV
@|
@.
\\
0
@>>>
\mathbb R
@>>>
\GR
@>>>
\pio
@>>>
1
\end{CD}
\label{uce2}
\end{equation}
be a commutative diagram of central extensions.
Consider
a principal 
$\mathrm U(1)$-bundle $\prin_{\MS} \colon {\MS} \to \Sigma$
on $\Sigma$ having Chern class $1$,
cf., e.g., \cite[I.4 p. 58 ff]{hirzeboo} for the precise meaning
of having Chern class $1$.
Similarly as in \cite[\S 6]{atibottw}, we realize \eqref{3.2} 
in terms of $\prin_{\MS}$ as follows 
(in \cite[\S 6]{atibottw}, the compact 3-manifold $\MS$ is written as $Q$):
Choose
a smooth connection form 
\begin{equation}
\omega_{\MS}\colon \mathrm T{\MS} \longrightarrow i \mathbb R
=\mathrm{Lie}(\mathrm U(1)) \subseteq
\mathrm{Lie}(\mathrm{GL}(1,\mathbb C)) =\mathbb C 
\end{equation} 
on $\prin_{\MS}$
having non-degenerate curvature form
and write
the curvature form as
$-2 \pi i \vS \in \form^2(\Sigma, i\mathbb R)$.
This defines the volume form
$\vS$ on $\Sigma$ and fixes an orientation of $\Sigma$
and, when we orient $e^2$ consistently with $\Sigma$,
fixes an orientation
of the closed path 
$
{\left[u_1,v_1\right] \cdot
\ldots
\cdot
\left[u_\ell,v_\ell\right]}$,
the \lq\lq boundary path\rq\rq\ of
$e^2$, as well.
Unlike the approach in~\cite{atibottw}, in our setting,
there is no need to 
choose a Riemannian metric on $\Sigma$ and to
arrange for
$\vS$
to be the volume form associated to that Riemannian metric.
We can, of course, choose a (positive) complex structure on $\Sigma$
that is 
compatible with the volume form $\vS$ (as a symplectic structure)
and work with the resulting K\"ahler metric on $\Sigma$.
Then the $2$-form $\vS$ 
is the corresponding
normalized Riemannian volume form on $\Sigma$,
and the orientation of $\Sigma$ coincides with the orientation 
arising from the complex structure.
This reconciles our approach with that  in~\cite{atibottw}
and shows that the sign in the above expression for the curvature
form is consistent with standard Chern-Weil theory, cf. \cite{MR2244174}.

The group $\Gamma$ is isomorphic to the fundamental group of ${\MS}$. 
To make the identification
of $\Gamma$ with the fundamental group of ${\MS}$ explicit,
we choose a pre-image $Q_{\MS}\in {\MS}$ of the chosen base point $Q$
of $\Sigma$ and, thereafter,
closed lifts 
 ${u}_{1,{\MS}}$, ${v}_{1,{\MS}}$, \ldots, ${u}_{\ell,{\MS}}$, ${v}_{\ell,{\MS}}$, to ${\MS}$, 
 of the canonical curves having $Q_{\MS}\in {\MS}$ as starting point.
Though this is not strictly necessary, we note that,
since, up to gauge transformations, 
the curvature determines the connection only up to a member
of $\Ho^1(\Sigma,\mathrm U(1))$,
we can rechoose the smooth principal $\mathrm U(1)$-connection
form
$\omega_{\MS}$ in such a way
that its holonomies with respect to 
$Q_{\MS}$ and the  canonical curves in $\Sigma$ 
are trivial.
See also \cref{rem1} below.
Such a connection is unique up to gauge transformations. 
The respective horizontal lifts
 ${u}_{1,{\MS}}$, ${v}_{1,{\MS}}$, \ldots, ${u}_{\ell,{\MS}}$, ${v}_{\ell,{\MS}}$ 
in ${\MS}$
having $Q_{\MS}$ as starting point are then closed, and we can 
take their based homotopy classes as generators of $\pi_1({\MS},Q_{\MS})\cong \Gamma$.

We take the universal covering projection $\mathbb R \to \mathrm U(1)$
to be given by  the association $t \mapsto \mathrm e^{it}$ 
($t \in \mathbb R$).
The $\mathrm U(1)$-bundle projection $\prin_{\MS}\colon \MS \to \Sigma$
lifts to a principal $\mathbb R$-bundle projection
$\prin_{\wMS}\colon \wMS \to \widetilde \Sigma$
defined on the universal covering manifold
$\wMS$ of $\MS$ having as base the universal covering
manifold $\widetilde \Sigma$ of $\Sigma$,
and the $\mathbb R$-action and $\Gamma$-action on
$\wMS$ combine to a $\GR$-action on $\wMS$
turning the 
resulting projection $\prin_{{\wMS}}\colon {\wMS} \to \Sigma$
into
a principal bundle having $\GR$ as its structure group.
The connection form $\omega_{\MS}$ on $\prin_{\MS}$ determines a
connection form
$\omega_{{\wMS}} \colon \mathrm T{\wMS} \to \mathbb R$
on $\prin_{{\wMS}}\colon {\wMS} \to \Sigma$
and hence a connection
having curvature form
$-2 \pi  \vS \in \form^2(\Sigma, \mathbb R)$, and
this connection is necessarily central.
Here and henceforth we refer to a connection as being 
{\em central\/} when the values of its curvature form
lie in the center of the structure group.
The universal covering projection $\mathbb R \to \mathrm U(1)$
induces the map $\mathrm{Lie}(\GR)= \mathbb R \to i \mathbb R = \mathrm{Lie}(\mathrm U(1))$ given by $t \mapsto it$ ($t \in \mathbb R$) and,
by construction, the connection and curvature forms on
$\prin_{{\wMS}}$ and
$\prin_{{\MS}}$ correspond via that induced map.

\section{Topology of principal bundles over a closed oriented surface}
\label{top1}

To unveil, in the Fr\'echet setting, the structure of 
 the
moduli space of Yang-Mills connections on a fixed principal $G$-bundle
over $\Sigma$
we  
exploit a certain topological characteristic class 
that lies in the center  of the Lie algebra of $G$,  
cf.~\cite[\S 6 p.~560]{atibottw}.
For intelligibility and for later reference, 
we briefly recall how this class arises
and how it  relates to the topological classification
of principal $G$-bundles on $\Sigma$.

By structure theory, the compact connected Lie group $\GG$ is generated by
its maximal semi-simple subgroup $S=[\GG,\GG]$ and the connected component
$H$ of the identity of the center of $\GG$ and, accordingly, we write
$\GG$ as
$\GG=H \times_D S$, for the finite discrete central subgroup $D= H \cap S$
of $\GG$. 
The injection $H \to \GG$
induces an isomorphism $\overline H = H/D \to \GG/S$
onto the abelianized group $\GG_{\mathrm{Ab}}=\GG/S$, and
the group $\overline H$ is a torus group.

Let $\hh$ denote the Lie algebra 
of $\overline H$ (and of $H$).
The
assignment to $\varphi \in \Hom(\mathrm U(1),\overline H)$ of
the induced principal $\overline H$-bundle 
$\prin_{\MS} \times_{\varphi}\overline H\colon {\MS} \times_{\varphi}\overline H \to \Sigma$ topologically
classifies principal $\overline H$-bundles on $\Sigma$.
When we assign to a member $\varphi$ of $\Hom(\mathrm U(1),\overline H)$,
viewed as a \( 1 \)-parameter subgroup of \( \overline{H} \),
its tangent vector $X_\varphi \in \mathfrak h$ at the origin
(so that $\varphi(\mathrm e^{i t})=\mathrm{exp}(tX_\varphi)$),
we obtain an isomorphism
\begin{equation}
\Hom(\mathrm U(1),\overline H) \longrightarrow 
\mathrm{ker}(\mathrm{exp}\colon \mathfrak h \to \overline H)= \pi_1(\overline H)
\end{equation}
of discrete abelian groups.
Observing that
Poincar\'e duality with respect to the orientation of $\Sigma$
chosen in \cref{univex} above
identifies
$\mathrm H^2(\pio, \pi_1(\overline H))$ canonically with
$\pi_1(\overline H)$
reconciles this description with the ordinary topological classification
of principal $\overline H$-bundles on $\Sigma$.
In this language, our reference bundle
$\xi_{\MS}\colon \MS \to \Sigma$
introduced in \cref{univex} above has characteristic class
$X_{\xi_{\MS}}=2 \pi i \in i \R = \mathrm{Lie}(\mathrm U(1))$.

Consider a  principal $\GG$-bundle 
$\prin\colon \PP \to \Sigma$ on $\Sigma$.
The bundle projection factors through 
the induced map from the orbit manifold
$\PP/S$ to $\Sigma$, necessarily a smooth
principal $\overline H$-bundle,
and we denote this bundle by
$\prin_{\overline H}\colon \PP/S \to \Sigma$
and
by $X_\prin \in \hh$
the single topological characteristic class of
$\prin_{\overline H}$.
This yields the requisite topological characteristic class for
the bundle $\prin$, cf.~\cite[\S 6 p.~560]{atibottw}.

Consider the relator map
\begin{equation}
r\colon
\Hom(F,\GG)
\longrightarrow
\GG,
\ 
r(\chi)=\left[\chi(x_1),\chi(y_1)\right] \cdot
\dots
\cdot
\left[\chi(x_\ell),\chi(y_\ell)\right] \in \GG.
\label{relator}
\end{equation}
The injection  \( \Hom(\Gamma,\GG) \subseteq \Hom(F,\GG) \)
induced by the surjection $F \to \Gamma$ identifies 
a certain subspace
 of \( \Hom(\Gamma,\GG) \) with 
the subspace $r^{-1}(\mathrm{exp}(X_{\prin}))$
of  $\Hom(F,\GG)$, 
and we denote that subspace of \( \Hom(\Gamma,\GG) \)
by \( \Hom_{X_\prin}(\Gamma,\GG) \).
Viewed as  
a subspace of $\Hom(F,\GG)$, the
space $\Hom_{X_\prin}(\Gamma,\GG)$
is necessarily compact.

Let $\Hom_{X_\prin}(\GR,\GG)$ denote the space of homomorphisms
$\cchi$ from $\GR$ to $\GG$
that have the property that
$\cchi(t[r])=\exp(tX_{\prin})$ ($t \in \mathbb R$).
Given $\cchi\in \Hom_{X_\prin}(\Gamma,\GG)$, 
with a slight abuse of the notation
$\cchi$,
setting
$\cchi(t[r])=\exp(tX_{\prin})$ ($t \in \mathbb R$),
we obtain an extension 
of $\cchi$ to a homomorphism
$\cchi\colon \GR \longrightarrow \GG$,
uniquely determined by $\cchi$ and $X_{\prin}$, 
that is, 
the  restriction 
$\Hom_{X_\prin}(\GR,\GG) \to \Hom_{X_\prin}(\Gamma,\GG)$
is a bijection.
Hence, given $\cchi\in \Hom_{X_\prin}(\Gamma,\GG)$, 
the associated principal $\GG$-bundle 
$\prin_{\cchi}\colon {\wMS} \times_{\cchi} \GG \to \Sigma$
on $\Sigma$ is defined.

\begin{prop}
\label{prop1}
The assignment to 
$\cchi \in \Hom_{X_\prin}(\Gamma,\GG)$ 
of $\prin_{\cchi}$ yields a bijection between
the connected components of 
$\Hom_{X_\prin}(\Gamma,\GG)$ 
and the topological types
of  principal $\GG$-bundles on $\Sigma$ having
$X_{\prin} \in \hh$
as its corresponding characteristic class,
the number of components being given by the order
$|\pi_1(S)|$ of $\pi_1(S)$.
\end{prop}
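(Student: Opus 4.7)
The plan is to identify the set of connected components of $\Hom_{X_\prin}(\Gamma,\GG)$ with the set of isomorphism classes of principal $\GG$-bundles on $\Sigma$ whose associated $\overline H$-bundle has characteristic class $X_\prin$, and then to count those classes by means of the structure $\GG=H\times_D S$.

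I would first verify that $\prin_{\cchi}$ has characteristic class $X_\prin$ by composing $\cchi$ with $\GG\to\overline H$: the resulting $\overline H$-bundle is $\wMS\times_{\overline\cchi}\overline H$ with $\overline\cchi(t[r])=\exp(tX_\prin)$, which coincides by the construction in \cref{univex} with the unique $\overline H$-bundle of characteristic class $X_\prin$. Since isomorphism classes of principal $\GG$-bundles on the closed surface $\Sigma$ are classified by $H^2(\Sigma,\pi_1(\GG))\cong\pi_1(\GG)$ and this set is discrete, and since the assignment $\cchi\mapsto\prin_{\cchi}$ is continuous, the isomorphism class of $\prin_{\cchi}$ is locally constant in $\cchi$, hence descends to a well-defined map on connected components of $\Hom_{X_\prin}(\Gamma,\GG)$.

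For surjectivity of that induced map, I would follow the argument of Section~6 of~\cite{atibottw}: any $\eta\colon P\to\Sigma$ with characteristic class $X_\prin$ admits a central Yang--Mills connection; its parallel transport along the canonical curves $u_1,v_1,\dots,u_\ell,v_\ell$ starting from a chosen point over $Q$ produces an element of $\Hom(F,\GG)$, and the holonomy around the boundary of $e^2$ equals $\exp(X_\prin)$, so this element descends to $\cchi\in\Hom_{X_\prin}(\Gamma,\GG)$; using horizontal lifts then yields an explicit isomorphism $\eta\cong\prin_{\cchi}$. For injectivity on components, I would show that the fiber of $\cchi\mapsto[\prin_{\cchi}]$ over a fixed topological type is connected: two $\cchi,\cchi'$ giving isomorphic bundles arise from two central Yang--Mills connections on the same bundle, both having curvature $-X_\prin\cdot\vol_{\Sigma}$; since such connections form an affine subspace of the space of all connections, they can be joined by a continuous path whose holonomy translates to a continuous path from $\cchi$ to $\cchi'$ in $\Hom_{X_\prin}(\Gamma,\GG)$.

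Finally, applying the long exact homotopy sequence to $S\hookrightarrow\GG\twoheadrightarrow\overline H$ and using that $\pi_2(\overline H)=0$ because $\overline H$ is a torus, I obtain
\[
0\longrightarrow\pi_1(S)\longrightarrow\pi_1(\GG)\longrightarrow\pi_1(\overline H)\longrightarrow 0,
\]
so that the preimage in $\pi_1(\GG)$ of the class corresponding to $X_\prin$ in $\pi_1(\overline H)$ is a torsor over $\pi_1(S)$, yielding the count $|\pi_1(S)|$. The main obstacle is the injectivity step, since one must avoid circular reliance on results of the paper that come later; a direct route is to combine the affine connectedness of the space of connections of prescribed central curvature with the continuity of the holonomy map and the fact that two central Yang--Mills connections on the same bundle differ by a gauge transformation together with a closed $1$-form in the central direction, both of which vary continuously.
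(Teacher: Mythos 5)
Your overall architecture agrees with the paper's, which likewise delegates the bijection itself to \cite[Section 6]{atibottw} and only details the count; your count via the homotopy sequence of the fibration $S\to\GG\to\overline H$ (using $\pi_2(\overline H)=0$ and connectedness of $S$ to get $0\to\pi_1(S)\to\pi_1(\GG)\to\pi_1(\overline H)\to 0$) is correct and is the same computation the paper performs after translating characteristic classes through Poincar\'e duality. The well-definedness of the map on components and the surjectivity step (existence of a central Yang--Mills connection on every bundle of class $X_\prin$, read off through its holonomies) are also consistent with the paper's intent.

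The injectivity step, however, rests on a false claim. The set of connections $\conn$ on a fixed bundle with $\curv_\conn=-X_\prin\cdot\vol_\Sigma$ is \emph{not} an affine subspace of $\Conn$ unless $\GG$ is abelian: writing $\conn'=\conn+\alpha$ with $\alpha\in\form^1(\Sigma,\adx)$ gives $\curv_{\conn'}=\curv_\conn+\dif_\conn\alpha+\tfrac12[\alpha\wedge\alpha]$, so the condition on $\alpha$ is quadratic. For semisimple $\GG$ this set is precisely the space of flat connections on the fixed bundle, and its connectedness is essentially equivalent to the statement you are trying to prove (that a single component of $\Hom_{X_\prin}(\Gamma,\GG)$ sits over each topological type), so the argument is circular at best. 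Your fallback claim --- that two central Yang--Mills connections on the same bundle differ by a gauge transformation together with a closed central $1$-form --- is also false: two flat connections on the trivial $\mathrm{SU}(2)$-bundle with non-conjugate holonomy representations differ by neither, the central direction being zero. What is actually needed is the nontrivial connectivity result underlying \cite[\S 6]{atibottw}: lifting the relator word to the universal cover $\widetilde{\GG}$ gives a well-defined map $\widetilde r\colon\GG^{2\ell}\to\widetilde{\GG}$ whose value in the fibre of $\pi_1(\GG)$ over $\exp(X_\prin)$ is the characteristic class, and one must prove that each fibre $\widetilde r^{-1}(\tilde z)$ over a central element is connected. You should either invoke that result explicitly or supply a proof of it; the path-of-connections argument as you have set it up does not close the gap.
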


The reasoning in~\cite[Section 6]{atibottw}
yields a proof of this proposition.
Suffice it to note the following:
Let $\overline S = \GG/H \cong S/D$.
The bundle $\prin$ is topologically classified by its characteristic class
in $\Ho^2(\Sigma, \pi_1(\GG))$ and, likewise, the principal
$\overline H$-bundle $\prin_{\overline H}$ 
is topologically classified by its characteristic class
in $\Ho^2(\Sigma, \pi_1(\overline H))$.
Under the induced homomorphism 
$\Ho^2(\Sigma, \pi_1(\GG))\to \Ho^2(\Sigma, \pi_1(\overline H))$,
the characteristic class of $\prin$ 
goes to the characteristic class of $\prin_{\overline H}$.
Poincar\'e duality relative to the fundamental class
$[\Sigma]$ identifies
that homomorphism with the induced homomorphism
$\pi_1(\GG)\to \pi_1(\overline H)$,
and the
induced homomorphism $\pi_1(S)\to \pi_1(\GG)$ identifies  
that kernel  with $\pi_1(S)$.
We leave the details to the reader.

We denote by $\Hom_{X_\prin}(\Gamma,\GG)_{\prin} \subseteq \Hom_{X_\prin}(\Gamma,\GG)$
the connected component that corresponds to the principal $\GG$-bundle $\prin$.

\section[Manifold structure on the space of based 
gauge equivalence classes of connections]{Fr\'echet manifold structure on the space of based 
gauge equivalence classes of connections} 
\label{frechet}

Consider a principal $\GG$-bundle $\prin \colon \PP \to \GGGG$
on a smooth compact connected manifold $\GGGG$.
Let $\gg$ denote the Lie algebra of $\GG$.
We use the standard formalism with structure group $\GG$
acting on $\PP$ from the right etc. 
We denote by
\( {\adx\colon \PP \times _\GG \gg \to \GGGG} \)
the (infinitesimal) adjoint bundle associated to \( \prin \) and  write 
the graded object of differential
forms on \( \GGGG \) with values in \( \adx \)
as
\(
\form^*(\GGGG, \adx) \).   

Consider the space $\Conn$ of {\em smooth $G$-connections\/}
on $\prin\colon \PP \to \GGGG$.
The Atiyah sequence~\cite{MR0086359}
associated with $\prin$, spelled out here for
the total spaces, has the form
\begin{equation}
0
\longrightarrow
\PP\times_G\gg
\longrightarrow
(\T \PP)/G
\longrightarrow
\T \GGGG
\longrightarrow
0,
\label{ati1}
\end{equation}
and we realize the space $\Conn$ as that of vector bundle sections
$
\T \GGGG
\to
(\T \PP)/G
$
for~\eqref{ati1}.
Thus the points of $\Conn$
are
in bijective correspondence with the sections of an affine bundle over 
\( \GGGG \)
and in this way carry a natural tame Fr\'echet manifold structure 
modeled on the
vector space 
\( \mathcal A^1(\GGGG, \adx) \)~\cite{AbbatiCirelliEtAl1986}.

Let $\Gau$ denote the group of {\em smooth gauge transformations\/} of
$\prin$. We realize this group as the group
\(
\Gamma^\infty(\Conj_\prin) \)
of smooth sections of the associated 
adjoint
bundle
$\Conj_\prin \colon \PP \times_G G \to \GGGG$,
endowed with the 
pointwise group structure.
In this manner, $\Gau$ acquires
the structure of
a tame Fr\'echet Lie group~\cite{AbbatiCirelliEtAl1986}. 

As usual, we identify the Lie algebra of $\Gau$  with the 
Lie algebra 
\( \gau =\Gamma^\infty(\adx) \) of smooth sections of the adjoint 
bundle $\adx$ on $\GGGG$, endowed with the 
pointwise Lie bracket.
This Lie algebra acquires
the structure of
a tame Fr\'echet Lie algebra
and, relative to the Fr\'echet structures, 
the exponential map \( \exp\colon \gau \to \Gau \) is a
local diffeomorphism at \( 0 \), see~\cite[Section~IV]{MR814813}.
In these topologies, the standard 
left
$\Gau$-action on $\Conn$ is tame smooth, proper and admits slices
at every point~\cite{AbbatiCirelliEtAl1989, diez2013}.
In particular the orbit space \( \Gau \backslash\Conn  \) is 
stratified by tame Fr\'echet manifolds.

Choose a base point  $\QQQ$ for the total space $\PP$ of
$\prin \colon \PP \to \GGGG$ and let
${Q=\prin(\QQQ) \in \GGGG}$. 
The \emph{evaluation map}
$\mathrm{ev}_{\QQQ}\colon \Gau \to G$ characterized by the identity
\[
\phi(Q) = [\QQQ, \mathrm{ev}_{\QQQ}(\phi)] \in \PP\times_G G,
\]
as  $\phi$ ranges over  $\Gau$ (the group of sections of $\Conj_\prin$),
is a morphism of Lie groups.
The  group of
\emph{smooth based gauge transformations} 
associated to the data
is the kernel of $\mathrm{ev}_{\QQQ}$. This group is 
a normal, locally exponential Lie subgroup of 
$\Gau$~\cite[Prop. IV.3.4]{Neeb2006}. 
While 
$\mathrm{ev}_{\QQQ}$ depends on the choice of
$\QQQ$, the kernel of $\mathrm{ev}_{\QQQ}$
 is independent of the particular choice of $\QQQ$
in $\prin^{-1}(Q)$ and depends only on $Q$;
we therefore denote the group of based gauge transformations by $\GauQ$.

Relative to $\Gau$, the group $\GauQ$ has finite
codimension equal to the dimension of $G$. \Cref{prop::lieGroup:finiteDimSubgroupIsStrongSplittingLieSubgroup} 
below entails that the obvious surjection
\( \Gau \to \Gau \slash \GauQ \) is a smooth right principal \( \GauQ
\)-bundle;
we refer to this kind of situation by the phrase
\lq\lq $\GauQ$
is a principal Lie
subgroup of $\Gau$\rq\rq.

\Cref{slice::actionOfSubgroup} below will say that
slices for the $\Gau$-action on $\Conn$ yield slices for the
 $\GauQ$-action on $\Conn$  and thus implies the
following:

\begin{prop}
\label{l1} 
Relative to the Fr\'echet manifold structures, the $\GauQ$-action on $\Conn$
is smooth and free; furthermore, it admits slices at every point. Hence the orbit space
$
\mathcal B_{\prin,Q}=\GauQ \backslash \Conn 
$ 
acquires a smooth Fr\'echet manifold structure, and 
the canonical projection
$\Conn \to \mathcal
B_{\prin,Q}$ is a smooth principal $\GauQ$-bundle.
\end{prop}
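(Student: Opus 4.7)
The plan is to split the claim into three assertions---smoothness of the action, freeness, and existence of slices---and then to deduce the Fréchet manifold and principal bundle structure on $\mathcal B_{\prin,Q} = \GauQ \backslash \Conn$ from these by a standard slice argument.

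Smoothness is immediate: the $\Gau$-action on $\Conn$ is already known to be tame smooth, and since $\GauQ$ is a principal Lie subgroup of $\Gau$ by \cref{prop::lieGroup:finiteDimSubgroupIsStrongSplittingLieSubgroup}, the restricted $\GauQ$-action is smooth. For freeness, I would argue that a gauge transformation $\phi \in \Gau$ fixing a connection $\conn \in \Conn$ must be parallel with respect to the connection induced by $\conn$ on the group adjoint bundle $\Conj_\prin$; equivalently, $\mathrm d_\conn \phi = 0$ when $\phi$ is viewed as a section of $\Conj_\prin$. If furthermore $\phi \in \GauQ$, then $\phi(Q)$ equals the identity element in the fibre over $Q$, and the connectedness of $\mani$ together with parallel transport along paths emanating from $Q$ forces $\phi$ to agree with the identity section throughout $\mani$. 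Thus $\phi$ is the identity in $\GauQ$.

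For slices, I would invoke \cref{slice::actionOfSubgroup}, which ensures that any slice at $\conn$ for the $\Gau$-action doubles as a slice at $\conn$ for the $\GauQ$-action. Since slices for the ambient $\Gau$-action exist at every point by \cite{AbbatiCirelliEtAl1989, diez2013}, the same holds for $\GauQ$. Freeness together with the slice property gives local trivialisations of the orbit projection $\Conn \to \GauQ \backslash \Conn$ of the form $\GauQ \times S \to S$ for slice neighbourhoods $S$; these local models endow the orbit space with a tame Fréchet manifold structure and turn the projection into a smooth principal $\GauQ$-bundle.

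The main technical obstacle does not lie in the proposition itself but rather in the two preparatory results it invokes: \cref{prop::lieGroup:finiteDimSubgroupIsStrongSplittingLieSubgroup}, which requires a strong splitting $\gau \cong \gg \oplus \mathrm{Lie}(\GauQ)$ compatible with the Fréchet structure (to be supplied by the derivative of $\mathrm{ev}_{\QQQ}$), and \cref{slice::actionOfSubgroup}, which requires a careful descent argument for slices along a principal Lie subgroup of a tame Fréchet Lie group. Granted these, \cref{l1} follows essentially formally.
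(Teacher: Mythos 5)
Your proposal is correct and follows essentially the same route as the paper: smoothness is inherited from the ambient $\Gau$-action, freeness of $\GauQ$ is the standard parallel-transport argument, slices come from \cref{slice::actionOfSubgroup}, and the quotient manifold and principal bundle structure then follow from freeness plus the slice property, with the real work residing in \cref{prop::lieGroup:finiteDimSubgroupIsStrongSplittingLieSubgroup} and \cref{slice::actionOfSubgroup} exactly as you say. One small imprecision: a $\Gau$-slice $S$ does not literally \emph{double as} a $\GauQ$-slice (its $\GauQ$-saturation would not be open); \cref{slice::actionOfSubgroup} instead produces the $\GauQ$-slice as a thickening $\sigma(V)\cdot S$ of $S$ by a transversal to $\GauA\GauQ$ in $\Gau$, which is why that proposition additionally requires $\GauA\GauQ$ to be a principal Lie subgroup.
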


\section{Wilson loops}
\label{wilson}

Consider a principal $\GG$-bundle $\prin \colon \PP \to \GGGG$
on a smooth connected manifold $\GGGG$, not necessarily compact.
Choose a base point 
$\QQQ$ of $\PP$ and
consider a smooth closed path 
$\wuw\colon [0,1]\to \GGGG$
in $\GGGG$ starting at the point $\prin(\QQQ)$ of $\GGGG$. 
With respect to $\QQQ$
and a smooth connection $\conn$ on $\prin\colon \PP \to \GGGG$,
we  denote
by
$\mathrm{Hol}_{\wuw,\QQQ}(\conn)\in \GG$
the holonomy of $\conn$ along $\wuw$.
For later reference, we recall a description, 
cf.~\cite[Appendix to Lecture 3]{MR1338391}, \cite[Proof of Proposition II.3.1 p. 69]{MR0152974}.

Let $\wuw_\PP \colon [0,1] \to \PP$ be a lift of $\wuw$ having $\QQQ$
as its starting point, and let
$\omega_\conn$ denote the connection form associated to $\conn$. 
The solution
$\wuw_\GG\colon [0,1] \to \GG$  of the differential equation
\begin{equation}
\omega_\conn({(\wuw_\PP \wuw_\GG)\,\dot {} }\, )=0,\ \wuw_\GG(0)=e,
\label{diff3}
\end{equation}
then yield the desired horizontal 
lift $\wuw_\conn\colon [0,1] \to \PP$ 
of $\wuw$
as
$\wuw_\conn = \wuw_\PP \wuw_\GG$.

Now $(\wuw_\PP \wuw_\GG)\,\dot{}
= \dot \wuw_\PP \wuw_\GG +\wuw_\PP \dot \wuw_\GG$, 
the sum being evaluated, for given $t\in [0,1]$, in the vector space
$\mathrm T_{\wuw_\PP(t) \wuw_\GG(t)}\PP$,
and
\begin{equation}
\omega_\conn(\dot \wuw_\PP \wuw_\GG +\wuw_\PP \dot\wuw_\GG)
=\omega_\conn(\dot \wuw_\PP \wuw_\GG)+\omega_\conn(\wuw_\PP \dot\wuw_\GG)=
\mathrm{Ad}_{\wuw_\GG}^{-1}\omega_\conn(\dot \wuw_\PP)+\wuw_\GG^{-1}\dot\wuw_\GG,
\end{equation}
whence \eqref{diff3} is equivalent to
\begin{equation}
\dot \wuw_\GG  \wuw^{-1}_\GG  =-\omega_\conn(\dot \wuw_\PP ),\ \wuw_\GG(0)=e.
\label{diff4}
\end{equation}
Then  the identity
\begin{equation}
\QQQ\, \Hol_{\wuw,\QQQ}(\conn)=\wuw_\conn(1) =\wuw_\PP(1) 
\wuw_\GG(1)
\end{equation}
characterizes $\Hol_{\wuw,\QQQ}(\conn) \in \GG$.

The following observation
is a Fr\'echet version of~\cite[Theorem 2.1]{MR1600534}.

\begin{lem}
\label{horlift}
Given a smooth loop $\wuw\colon [0,1]\to \GGGG$ in 
$\GGGG$ having starting point $\prin(\QQQ)$,
the horizontal lift map 
\begin{equation}
 \mathrm{Hor}_{\wuw,\QQQ}\colon \Conn \longrightarrow 
\Gamma^\infty(\wuw^* \prin),\ \conn \longmapsto \wuw_\conn,
\end{equation}
with respect to $\QQQ$
is smooth
and, at a smooth connection $\conn$,  the derivative is given by
\begin{equation} \label{eq::gt:wilsonLoop:derivativeOfHorizontalLift}
\begin{aligned}
\tangent_\conn \mathrm{Hor}_{\wuw,\QQQ}&\colon 
\mathcal A^1(\GGGG, \adx) \longrightarrow C^\infty([0,1], \gg), 
\quad \vartheta \mapsto f_\vartheta,\\  
f_\vartheta(t)&=\int_0^t {\vartheta_{\wuw_\conn(\tau)}(\dot\wuw_\conn(\tau))} \mathrm{d}\tau,\ t \in [0,1].
\end{aligned}
\end{equation}
\end{lem}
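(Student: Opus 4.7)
The plan is to view the horizontal lift as the solution of a connection-dependent ordinary differential equation on $\GG$, deduce smoothness from smooth dependence of ODE solutions on parameters in the tame Fr\'echet category, and then obtain the derivative formula by differentiating the defining ODE.

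Fix once and for all a smooth (not necessarily horizontal) lift $\wuw_\PP\colon [0,1] \to \PP$ of $\wuw$ with $\wuw_\PP(0) = \QQQ$, as in~\eqref{diff3}. Any other lift of $\wuw$ starting at $\QQQ$ has the form $\wuw_\PP \cdot g$ for a unique $g \in C^\infty([0,1], \GG)$ with $g(0) = e$, which yields an identification $\Gamma^\infty(\wuw^*\prin) \cong C^\infty([0,1], \GG)$. Under this identification, the horizontal lift map reads $\mathrm{Hor}_{\wuw,\QQQ}(\conn) = \wuw_\PP \cdot \wuw_\GG$, where $\wuw_\GG$ solves~\eqref{diff4}. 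The assignment $\conn \mapsto \bigl(t \mapsto -\omega_\conn(\dot \wuw_\PP(t))\bigr)$ is continuous and linear from $\Conn$ to $C^\infty([0,1],\gg)$ and hence smooth in the Michal--Bastiani sense, so smoothness of $\mathrm{Hor}_{\wuw,\QQQ}$ reduces to smoothness of the ODE-solution map $C^\infty([0,1],\gg) \to C^\infty([0,1], \GG)$, $\alpha \mapsto g$ with $\dot g\, g^{-1} = \alpha$ and $g(0)=e$. This is the classical smooth-dependence-on-parameters statement for ODEs, transported to the tame Fr\'echet setting in the spirit of~\cite{MR656198} and~\cite{Neeb2006}.

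To compute the derivative, it is convenient to parametrize variations directly with respect to the horizontal lift $\wuw_\conn$ rather than $\wuw_\PP$. For $\vartheta \in \form^1(\GGGG, \adx)$, set $\conn_s = \conn + s\vartheta$ and write $\wuw_{\conn_s} = \wuw_\conn \cdot g_s$ with $g_0 \equiv e$. Denoting by $\vartheta^\sharp \in \form^1(\PP, \gg)$ the horizontal equivariant $\gg$-valued form on $\PP$ corresponding to $\vartheta$, one has $\omega_{\conn_s} = \omega_\conn + s\vartheta^\sharp$; combined with $\omega_\conn(\dot \wuw_\conn) = 0$, the horizontality condition $\omega_{\conn_s}(\dot \wuw_{\conn_s}) = 0$ unfolds, via equivariance of the connection form and the standard identification of vertical tangent vectors with $\gg$, to
\[
g_s^{-1}\dot g_s \;=\; -s\,\AdAction_{g_s^{-1}}\bigl(\vartheta^\sharp_{\wuw_\conn}(\dot \wuw_\conn)\bigr),\quad g_s(0)=e.
\]
Differentiating at $s=0$ and using $g_0 \equiv e$, the variation $\beta(t) = \tfrac{d}{ds}\big|_{s=0} g_s(t) \in \gg$ satisfies $\dot\beta(t) = -\vartheta^\sharp_{\wuw_\conn(t)}(\dot \wuw_\conn(t))$ with $\beta(0) = 0$. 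Since $\vartheta^\sharp_{\wuw_\conn(\tau)}(\dot \wuw_\conn(\tau))$ is by construction the representative in $\gg$ of $\vartheta_{\wuw(\tau)}(\dot \wuw(\tau)) \in (\adx)_{\wuw(\tau)}$ via the frame $\wuw_\conn(\tau)$, integration gives the asserted formula~\eqref{eq::gt:wilsonLoop:derivativeOfHorizontalLift}, up to the conventional identification of $T_{\wuw_\conn}\Gamma^\infty(\wuw^*\prin)$ with $C^\infty([0,1], \gg)$.

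The principal obstacle is the smoothness of the parameter-dependent ODE-solution map in the tame Fr\'echet sense; the derivative identity is then essentially a matter of differentiating the defining ODE and using $g_0 \equiv e$. The required smoothness can be established by realising the solution as a fixed point of a contraction admitting tame estimates in each of the $C^k$-seminorms topologising $C^\infty([0,1], \GG)$, a procedure standard within the Michal--Bastiani calculus employed throughout this paper.
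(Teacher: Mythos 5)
Your proposal follows essentially the same route as the paper's proof: both reduce the horizontal lift to a $\GG$-valued ODE whose right-hand side depends affinely on the connection, deduce smoothness of $\mathrm{Hor}_{\wuw,\QQQ}$ from smoothness of the solution map of that ODE, and obtain the derivative by differentiating the defining equation at $s=0$ using that the zeroth-order path is constant. The differences are in the details. First, the paper trivializes $\wuw^*\prin$ by the horizontal lift $\wuw_\conn$ of the reference connection, so that $\wuw_\conn$ itself becomes the constant path at $e$ and the perturbed ODE reads $\dot g\, g^{-1}=h_\vartheta$ outright; you instead fix an arbitrary lift $\wuw_\PP$ for the identification $\Gamma^\infty(\wuw^*\prin)\cong C^\infty([0,1],\GG)$ and switch to the horizontal frame only for the derivative computation --- both are fine. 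Second, where you appeal to smoothness of the evolution map $\alpha\mapsto g$, $\dot g\, g^{-1}=\alpha$, $g(0)=e$, as a known fact (regularity of the mapping group $C^\infty([0,1],\GG)$) and sketch a contraction argument, the paper establishes continuity via Dieudonn\'e's continuous dependence on parameters and then reads off all higher derivatives from the Magnus expansion of the solution. Your route is legitimate and arguably cleaner, but note that tame estimates are not actually needed here: Michal--Bastiani smoothness suffices and no Nash--Moser argument enters this lemma, so the honest citation is to regularity of $C^\infty([0,1],\GG)$ rather than to a tame fixed-point scheme.

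One point needs fixing. You posit $\omega_{\conn_s}=\omega_\conn+s\vartheta^\sharp$, which under the paper's realization of $\Conn$ as sections of the Atiyah sequence is the wrong sign: the paper is explicit that $\conn+\vartheta$ has connection form $\omega_\conn-\vartheta$. As written, your computation yields $\dot\beta=-\vartheta^\sharp_{\wuw_\conn}(\dot\wuw_\conn)$ and hence $f_\vartheta(t)=-\int_0^t\vartheta_{\wuw_\conn(\tau)}(\dot\wuw_\conn(\tau))\,\mathrm{d}\tau$, the negative of \eqref{eq::gt:wilsonLoop:derivativeOfHorizontalLift}; this discrepancy is not absorbed by the identification of $\mathrm T_{\wuw_\conn}\Gamma^\infty(\wuw^*\prin)$ with $C^\infty([0,1],\gg)$, which is a matter of trivialization, not of sign. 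With the paper's convention the two minus signs cancel and the asserted formula results.
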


To guide the reader through 
\eqref{eq::gt:wilsonLoop:derivativeOfHorizontalLift}, we note that,
for any $\tau \in [0,1]$, the notation
$\wuw_\conn(\tau)$ refers to a point of the total space $\PP$ of $\prin$
and
$\dot\wuw_\conn(\tau)$ to a tangent direction at $\wuw_\conn(\tau)$.

The idea that underlies the proof 
we are about to give
is the same as that for the proof of~\cite[Theorem 2.1]{MR1600534}. 
However, we here consider the horizontal lift as a map 
\( \Conn \to \Gamma^\infty(\wuw^* \prin) \) 
into the space of smooth sections of the induced bundle
$\wuw^* \prin$
rather than as a map of the kind
\( \Conn \times [0,1] \to \PP \). Moreover, we exploit 
a very natural and well-established notion of smoothness 
instead of the ad hoc concept of a map that is
\lq\lq smooth on every finite-dimensional submanifold\rq\rq\  
used in~\cite{MR1600534}.

\begin{proof}  
Let $\wuw\colon [0,1]\to \GGGG$ be a smooth loop in 
$\GGGG$ having starting point $\prin(\QQQ)$,
let $\conn$ be a smooth connection on $\prin$, and let
\( \wuw_{\conn} \) 
denote the horizontal lift of \( \wuw \) with respect to 
\( \conn \) and starting at \( \QQQ \).

The induced bundle $\wuw^*\prin$ trivializes.
We take the connection $\conn$ as reference connection, 
use the horizontal lift $\wuw_\conn$ 
of $\wuw$ relative to $\conn$
to trivialize
the bundle $\wuw^* \prin$ and
argue henceforth in terms of the
total space $[0,1] \times G$. 
The trivialization 
identifies the space $\Gamma^\infty(\wuw^* \prin)$ of smooth sections
of  $\wuw^* \prin$
with the space $C^{\infty}([0,1],G)$ of smooth $G$-valued maps on $[0,1]$.
By construction, in the chosen trivialization,
the original path $\wuw$ amounts to the identity path
$[0,1] \to [0,1]$.
Accordingly, given $\vartheta \in \form^1(\GGGG, \adx)$,
the component
$\wuw_{\conn +\vartheta} \colon [0,1] \to G$ into $G$
of the horizontal lift
$[0,1]\to [0,1] \times G$ of the identity path of $[0,1]$
determines the horizontal lift  of $\wuw$ relative to $\conn +\vartheta$.
Thus the horizontal lift map takes the form
\begin{equation} 
		\mathrm{Hor}_{\wuw,\QQQ}\colon 
\form^1(\GGGG, \adx) \longrightarrow C^\infty([0,1], \GG), 
\qquad \vartheta \mapsto \wuw_{\conn + \vartheta}.
\label{trivia}
\end{equation}
In particular, $\wuw_\conn$ is the trivial path at the neutral element
$e$ of $G$.

We use the variable $t$ as coordinate on $[0,1]$.
Let $\vartheta \in \form^1(\GGGG, \adx)$.
This $1$-form, restricted to
the bundle $\wuw^*\prin$,
now takes, along the factor $[0,1]$ of the decomposition
 $[0,1] \times G$ of the total space, 
 the form
$h_{\vartheta} dt$ for some smooth $\gg$-valued function $h_\vartheta$ 
on $[0,1]$ uniquely determined by $\vartheta$, 
and
the path \( \wuw_{\conn +\vartheta} \) is  the solution of the 
differential equation 
\begin{equation}
\dot\wuw_{\conn +\vartheta}\wuw_{\conn +\vartheta}^{-1} = h_\vartheta.
\label{deq}
\end{equation}
The sign here is consistent with 
\eqref{diff4}.
Indeed, in our realization of connections as vector bundle sections for
\eqref{ati1},
the connection   
 $\conn + \vartheta$ on $\prin$ has connection form
$\omega_\conn - \vartheta \colon \T \PP \to \gg$,
where we identify $\adx$-valued $1$-forms on $\GGGG$ with
$G$-equivariant $1$-forms $\T \PP \to \gg$ as usual.
The right-hand side of equation \eqref{deq} 
 depends continuously on the parameter 
\( \vartheta \in \form^1(\GGGG,\adx) \). 
The continuous dependence of solutions of differential equations on 
parameters~\cite[10.7.1]{MR0349288} 
now implies that 
the horizontal lift map \eqref{trivia}
is a continuous map.

Let $s$ denote a real variable.
Differentiating the identity
\begin{align}
\dot \wuw_{ \conn + s\vartheta}\wuw_{ \conn + s\vartheta}^{-1}&=s h_\vartheta 
\label{id21}
\end{align}
with respect to $s$ yields the identity
\begin{align*}
\tfrac{\partial}{\partial s}
(\dot \wuw_{ \conn + s\vartheta})\wuw_{ \conn + s\vartheta}^{-1}
+\dot \wuw_{ \conn + s\vartheta}
\tfrac{\partial}{\partial s}
(\wuw_{ \conn + s\vartheta}^{-1})
&=h_\vartheta .
\end{align*}
However, the path $\wuw_\conn$
is constant whence, for $s=0$,  the differential equation reduces to
\begin{align*}
\tfrac{\partial}{\partial s}\big|_{s=0}
(\dot \wuw_{ \conn + s\vartheta})
&=h_\vartheta .
\end{align*}
Interchanging the order of differentiation, we write this equation as
\begin{align*}
\tfrac{d}{dt}
\tfrac{\partial}{\partial s}\big|_{s=0}
(\wuw_{ \conn + s\vartheta})
&=h_\vartheta .
\end{align*}
Consequently
\begin{align*}
\tfrac{\partial}{\partial s}\big|_{s=0}
(\wuw_{ \conn + s\vartheta})(t)
&=\int_0^t h_\vartheta(\tau) d\tau 
=\int_0^t {\vartheta_{\wuw_\conn(\tau)}(\dot\wuw_\conn(\tau))} \mathrm{d}\tau.
\end{align*}

The associated {\em Magnus series\/} provides more insight:
Since \eqref{trivia}
is a continuous map,
the horizontal lift with respect to a connection near 
\( \conn \) lies in an appropriate tubular neighborhood
of  \( \wuw_\conn \). 
Hence, when $\vartheta$
is sufficiently close to zero, we
can
write the path $\wuw_{\conn +\vartheta}$ as
\begin{equation*}
\wuw_{\conn +\vartheta} = \mathrm{exp}(\mathrm W_{\conn +\vartheta})
\end{equation*}
for some path
$\mathrm W_{\conn +\vartheta}\colon [0,1] \to \gg$. 
Using the familiar identity
\begin{align*}
\tfrac{d}{dt}(\mathrm{exp}(Y(t))) \mathrm{exp}(Y(t))^{-1}&=
\frac { 
\mathrm e^{\mathrm{ad}(Y(t))}-1}{\mathrm{ad}(Y(t))}
\dot Y(t),
\end{align*}
letting $Y=\mathrm W_{\conn +\vartheta}$, we 
rewrite equation \eqref{deq}
as
\begin{align}
\frac { 
\mathrm e^{\mathrm{ad}(Y)}-1}{\mathrm{ad}(Y)}
\dot Y
&=h_\vartheta .
\label{deqq}
\end{align}
The first two terms
of a {\em Magnus expansion\/} 
$Y(t) =\sum_{j \geq 1} Y^{(j)} (t)$
of the solution \( Y \) of \eqref{deqq}
read
\begin{align}
		Y^{(1)}(t) &= 
\int_0^t \mathrm{d} \tau\ h_\vartheta(\tau), \\
	 	Y^{(2)}(t) &= 
                  -\frac{1}{2} \int_0^t \mathrm{d} \tau_1 
       \int_0^{\tau_1} \mathrm{d} \tau_2\ 
[h_\vartheta(\tau_1), h_\vartheta(\tau_2)],  
\end{align}
and the higher order terms involve higher order commutators of 
\( h_\vartheta \) 
evaluated at different times. The expansion yields explicit 
expressions for the derivatives of the
horizontal lift map \eqref{trivia},
since every order in the series equates to the order of differentiation. 
In particular, this argument 
confirms that \eqref{trivia} 
is smooth and that the
expression~\eqref{eq::gt:wilsonLoop:derivativeOfHorizontalLift}
yields the derivative.
\end{proof}

We return to our surface $\Sigma$ and
maintain
the choice of base point $Q$ and of
a canonical system $u_1,v_1,\dots,u_\ell,v_\ell$ of 
closed curves in \( \Sigma \) whose
homotopy classes generate \( \pi_1(\Sigma, Q) \), cf. \cref{univex}.
Consider a principal $G$-bundle $\prin \colon \PP \to \Sigma$ on $\Sigma$
and choose a base point
$\QQQ$ of $\PP$ over  the base point $Q$ of $\Sigma$.
Henceforth the base point $\QQQ$ of $\PP$ remains fixed.
Given a smooth connection $\conn$ on $\prin$,
for $1 \leq j \leq \ell$,
let $u_{\conn,j}$ and $v_{\conn,j}$ 
denote the horizontal lifts, in the total space $P$ of $\prin$, of
$u_j$ and $v_j$, respectively,
with reference to $\conn$ and $\QQQ$.
The \emph{Wilson loop mapping}
\begin{equation}
\wilson \colon \Conn  \longrightarrow G^{2\ell}
\label{wilson1}
\end{equation}
relative  to $\QQQ$ and $u_1,v_1,\dots,u_\ell,v_\ell$ 
assigns to a smooth connection $\conn$ on $\prin$ the point
\begin{equation}
\wilson(\conn) =
\left(
\mathrm{Hol}_{u_1,\QQQ}(\conn),
\mathrm{Hol}_{v_1,\QQQ}(\conn),
\dots,
\mathrm{Hol}_{u_\ell,\QQQ}(\conn),
\mathrm{Hol}_{v_\ell,\QQQ}(\conn)
\right)
\label{value}
\end{equation}
of $G^{2\ell}$~\cite[(2.6)]{MR1600534}.
This map depends on the choices 
made to carry out its construction.
In \cref{dep} below we  explore the dependence on 
these choices.

For $q \in G$, we denote the induced operation of left translation
by
\[
L_q\colon \gg=\mathrm T_eG \longrightarrow \mathrm T_q G.
\]
The subsequent result
is the Fr\'echet version of~\cite[Theorem 2.7]{MR1600534},
spelled out for the special case
where the base manifold is a closed surface.

\begin{thm}\label{adapted}
In the Fr\'echet topologies, the Wilson loop mapping \( \wilson \colon \Conn  \to G^{2\ell}  \), cf. \eqref{wilson1} above,
is smooth.
At a smooth connection $\conn$, the value 
{\rm \eqref{value}}
under \( \wilson \) being written as
\begin{equation*}
\begin{aligned}
\wilson(\conn) &=
(a_1,b_1,\ldots,a_{\ell},b_{\ell})\in G^{2\ell},
\end{aligned}
\end{equation*}
the tangent map
\begin{equation}
\tangent_\conn \wilson
\colon
\mathrm T_\conn\Conn
\longrightarrow
\mathrm T_{\wilson(\conn)} G^{2\ell}
=
\mathrm T_{a_1} G
\times
\mathrm T_{b_1} G
\times
\dots
\times
\mathrm T_{a_\ell} G
\times
\mathrm T_{b_\ell} G
\label{diff2}
\end{equation}
of \( \wilson \) sends
$\vartheta \in \form^1(\Sigma,\adx)= \mathrm T_\conn\Conn$
to
\begin{equation}
\left(
L_{a_1} \int_{u_{\conn,1}} \vartheta,
L_{b_1} \int_{v_{\conn,1}} \vartheta,
\dots,
L_{a_\ell} \int_{u_{\conn,\ell}} \vartheta,
L_{b_\ell} \int_{v_{\conn,\ell}} \vartheta
\right),
\end{equation}
by construction a vector in
$\mathrm T_{a_1} G
\times
\mathrm T_{b_1} G
\times
\dots
\times
\mathrm T_{a_\ell} G
\times
\mathrm T_{b_\ell} G.
$
\end{thm}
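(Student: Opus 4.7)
The plan is to deduce this theorem from \cref{horlift} by factoring each holonomy $\Hol_{\wuw,\QQQ}$ for $\wuw\in\{u_1,v_1,\ldots,u_\ell,v_\ell\}$ through the smooth horizontal lift map and an evaluation at $t=1$, then applying the chain rule. Once the trivializations are aligned, the substantive analytic work has already been carried out in the lemma.

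Given $\wuw$, the holonomy is characterized by $\QQQ\cdot\Hol_{\wuw,\QQQ}(\conn)=\widehat\wuw_\conn(1)$, so it factors as
\[
\Conn\xrightarrow{\;\mathrm{Hor}_{\wuw,\QQQ}\;}\Gamma^\infty(\wuw^*\prin)\xrightarrow{\;\mathrm{ev}_1\;}\prin^{-1}(Q)\xrightarrow{\;\cong\;}G,
\]
the last arrow being the $G$-equivariant diffeomorphism $\QQQ\cdot g\mapsto g$. The first arrow is smooth by \cref{horlift}. Pointwise evaluation of smooth sections of a finite-rank bundle is smooth: in a local trivialization it reduces to the continuous linear restriction $C^\infty([0,1],\gg)\to\gg$, and continuous linear maps between Fr\'echet spaces are smooth. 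The composition is therefore smooth, and taking the product of the $2\ell$ components yields the smoothness of $\wilson$.

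For the derivative, I work in the trivialization $\Phi\colon[0,1]\times G\to\wuw^*\prin$, $\Phi(t,g)=\widehat\wuw_\conn(t)\cdot g$, employed in the proof of \cref{horlift}; it renders $\widehat\wuw_\conn$ as the constant path at $e$. The horizontal lift of $\wuw$ with respect to $\conn+s\vartheta$ then has the form $t\mapsto(t,h_{s\vartheta}(t))$ with $h_{s\vartheta}(0)=e$, and evaluation at $t=1$ yields $\widehat\wuw_\conn(1)\cdot h_{s\vartheta}(1)=\QQQ\cdot a_\wuw\cdot h_{s\vartheta}(1)$, where $a_\wuw\defeq\Hol_{\wuw,\QQQ}(\conn)$. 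Hence $\Hol_{\wuw,\QQQ}(\conn+s\vartheta)=a_\wuw\cdot h_{s\vartheta}(1)$. Differentiating at $s=0$ and invoking the formula \eqref{eq::gt:wilsonLoop:derivativeOfHorizontalLift} from \cref{horlift} give
\[
\frac{d}{ds}\bigg|_{s=0}h_{s\vartheta}(1)=\int_0^1\vartheta_{\widehat\wuw_\conn(\tau)}\bigl(\dot{\widehat\wuw}_\conn(\tau)\bigr)\,d\tau=\int_{\widehat\wuw_\conn}\vartheta\in\gg=\T_eG,
\]
so by the chain rule $\T_\conn\Hol_{\wuw,\QQQ}(\vartheta)=L_{a_\wuw}\!\int_{\widehat\wuw_\conn}\vartheta\in\T_{a_\wuw}G$. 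Assembling over the $2\ell$ canonical loops reproduces the claimed formula for \eqref{diff2}.

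The only point that needs care is the appearance of the left translation $L_{a_\wuw}$: the trivialization $\Phi$ normalizes $\widehat\wuw_\conn(1)$ to $(1,e)$, so the naive variation lands in $\T_eG$ and must be transported to $\T_{a_\wuw}G$ via the map $g\mapsto a_\wuw g$ that relates trivialized and untrivialized values at $t=1$. Beyond this bookkeeping step I expect no serious obstacle; the theorem is essentially a direct corollary of \cref{horlift}.
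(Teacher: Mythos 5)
Your proposal is correct and follows essentially the same route as the paper: the paper's proof is a one-line observation that $\wilson$ factors through the horizontal lift map of \cref{horlift} followed by evaluation at $1$, which is smooth and whose derivative is again evaluation at $1$. Your additional bookkeeping of the left translation $L_{a_\wuw}$ arising from the trivialization by $\widehat\wuw_\conn$ is exactly the step the paper leaves implicit, and it is carried out correctly.
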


\begin{proof}
This is a consequence of Lemma \ref{horlift}
since, given a smooth closed path $\wuw\colon [0,1]\to \Sigma$
starting at the base point $Q$,
evaluation at \( 1 \) is a smooth map 
\( \Gamma^\infty(\wuw^* \prin) \to P_Q \simeq G \), 
and the derivative thereof also comes down to evaluation at \( 1 \).
\end{proof}

\section{Preparing for the description of the Fr\'echet constraints}
\label{preparing}

To explain the idea as to how the constraints
arise,
consider a 2-form $\beta$ on $\Sigma$ such that,
with respect to the volume form $\vol_{\Sigma}$
on $\Sigma$ chosen in \cref{univex} above,
$\int_\DDD \beta= \int_\DDD \vol_{\Sigma}$
for any disk $\DDD$ in $\Sigma$.
Then $\int_\DDD (\beta-\vol_{\Sigma})=0$,
for any disk $\DDD$ in $\Sigma$ whence
\[
\beta=\vol_{\Sigma} + \dif \alpha
\]
for some 1-form $\alpha$ on $\Sigma$.
Now 
\[
\int_{\partial \DDD} \alpha = \int_\DDD \dif \alpha 
= \int_\DDD(\beta-\vol_{\Sigma}) = 0,
\]
for any disk $\DDD$ in $\Sigma$. However, a \( 1 \)-form whose integral over 
any closed
contractible path vanishes is an exact form.
Hence \( \dif \alpha = 0 \) and so $\beta =\vol_{\Sigma}$.
In other words, the functionals 
given by integration over arbitrary disks separate points of \( \mathcal A^2(\Sigma, \mathbb{R}) \).

We return to our principal $G$-bundle $\prin \colon \PP \to \Sigma$ on 
$\Sigma$ and maintain the choice of  base point 
$\QQQ$ of $\PP$ over the base point $Q$ of $\Sigma$.
For convenience, we slightly vary the classical construction
that reduces a principal bundle with connection
to the holonomy bundle at a chosen base point of the total space,
cf., e.~g., the reduction theorem
\cite[II.7.1]{MR0152974}.
Recall the choice of 
 a principal $\mathrm U(1)$-connection
form $\omega_{\MS}\colon \T \MS \to i \mathbb R$ 
on $\prin_{\MS}\colon \MS \to \Sigma$
and its lift
 $\omega_{\wMS}\colon \T \wMS \to  \mathbb R$ 
to a principal $\GR$-connection form on the
principal $\GR$-bundle
$\prin_{\wMS}\colon\wMS \to \Sigma$ 
and, furthermore,
the choice of
a base point
$Q_{\wMS}$ of $\wMS$
over $Q$.
Let $\conn$ be a smooth connection on $\prin$, and let
$\PP_{\mathrm{hor}}$ denote the space of paths 
$\wuw\colon [0,1] \to \wMS$ in $ \wMS$ that start at
$Q_{{\wMS}}$ and are horizontal relative to $\conn$, and let
$\mathrm{ev}\colon \PP_{\mathrm{hor}} \to \wMS $
denote the evaluation map which sends a path in 
$\PP_{\mathrm{hor}}$ to its end point.
This map is surjective.
Indeed,
since the principal $\GR$-bundle 
$\prin_{\wMS}\colon\wMS \to \Sigma$
coincides with its holonomy bundle at
$Q_{\wMS}$
 with respect to the connection form
$\omega_{\wMS}$, any point $T$ of $\wMS$
is the end point of a smooth horizontal path joining 
$Q_{\wMS}$ to $T$.

We define the {\em pre-reduction map\/}
$\widehat\lift_{Q_{{\wMS}},\QQQ, \conn}\colon \PP_{\mathrm{hor}} \to \PP$
{\em of\/} $\conn$
{\em relative to   
$Q_{{\wMS}}$ and\/} $\QQQ$ as follows:
Given a member
$\widetilde \wuw\colon [0,1]\to \wMS$ 
of $\PP_{\mathrm{hor}}$, that is,
a path
in
${\wMS}$ that is horizontal
relative to  $\omega_{\wMS}$ and
starts at the point
$Q_{{\wMS}}$ of $\wMS$,
let $\wuw$ be the path in $\Sigma$ obtained by projecting
$\widetilde \wuw$ into $\Sigma$,
and let
$\wuw_\PP\colon [0,1]\to \PP$
be the unique  lift
of $\wuw$
that is horizontal
for $\conn$ and has starting point
$\QQQ$;
define the value $\widehat \lift_{Q_{{\wMS}},\QQQ, \conn}(\widetilde u)$ 
to be the end point
of
$\wuw_\PP$.
We say that the pre-reduction map {\em is defined on\/}
$\wMS$ when it factors through
the evaluation map $\mathrm{ev}\colon \PP_{\mathrm{hor}} \to \wMS$;
we then write the resulting map as
$\lift_{Q_{{\wMS}},\QQQ, \conn}\colon {\wMS} \to P$.

Given a disk $\DDD$ in $\Sigma$, let $a_\DDD=\int_\DDD\vol_\Sigma$ 
denote the area of $\DDD$
with respect to $\vol_\Sigma$.
Recall the topological 
characteristic class  \( X_\prin \in \hh \) of \( \prin \), see \cref{top1}.

\begin{lem}
\label{crucial} Consider a smooth connection $\conn$ on 
$\prin\colon P\to \Sigma$.
The following are equivalent:

\begin{enumerate}
	\item
		The values of the curvature form
		$\mathrm{curv}_{\conn}$ of $\conn$
		lie in the center $\zzz$ of the Lie algebra $\gg$ of $\GG$
		in such a way that $\curv_\conn= -X_\prin\cdot \vol_\Sigma$.
		
	\item
		The values of the curvature form 
		$\mathrm{curv}_{\conn}$ of $\conn$
		lie in the center $\zzz$ of the Lie algebra $\gg$ of $\GG$,
		and
		the curvature form $\curv_\conn$ of $\conn$, being accordingly viewed
		as an ordinary  $\zzz$-valued $2$-form on $\Sigma$,
		satisfies the identity
		\begin{equation}
		\int_\DDD\mathrm{curv}_{\conn} =  -a_\DDD X_{\prin}
		\left (= -\int_\DDD \vol_{\Sigma} \,X_{\prin}\right) \in \zzz
		\label{constraint1}
		\end{equation}
		whenever $\DDD \subseteq \Sigma$ is a disk in $\Sigma$.
	\item
		For every smooth contractible loop $\wuw$ in $\Sigma$ 
		starting at some point $q$ of $\Sigma$
		and bounded by a disk $\DDD$
		in $\Sigma$, 
		\begin{equation}
		\mathrm{Hol}_{\wuw,q_\PP} (\conn) =\exp (a_\DDD X_{\prin})\in \ZZZ,
		\label{constraint11}
		\end{equation}
		for any choice of pre-image $q_\PP \in \PP$ of $q\in \Sigma$.
	
	\item
		The pre-reduction map  of $\conn$
		 relative to   
		$Q_{{\wMS}}$ and $\QQQ$ is defined on $\wMS$ and
		yields a morphism
		\begin{equation}
		\begin{CD}
		\GR
		@>>>
		{\wMS}
		@>{\prin_{{\wMS}}}>>
		\Sigma
		\\
		@V{\cchi_{\conn}}VV
		@V{\lift_{Q_{{\wMS}},\QQQ, \conn}}VV
		@|
		\\
		G
		@>>>
		\PP
		@>{\prin}>>
		\Sigma
		\end{CD}
		\label{redu1}
		\end{equation}
		of principal bundles with connection in such a way that
		the following hold:

		\begin{enumerate}
			\item[{\rm(a)}]
				The homomorphism
				$\wilson(\conn)\colon F \to G$
				which arises as the value of $\conn$ under the Wilson loop mapping
				{\rm \eqref{wilson1}}
				coincides with the composite
				\begin{equation*}
				F \longrightarrow \Gamma \longrightarrow \GR
				\stackrel{\cchi_{\conn}} \longrightarrow G,
				\end{equation*}
				the unlabeled arrows being the obvious maps.
				
			\item[{\rm (b)}]
				The homomorphism $\cchi_{\conn}$ satisfies the identity
				$\cchi_{\conn}(t[r])=\mathrm{exp}(t X_{\prin})$, for $t \in \R$.
		\end{enumerate}
\end{enumerate}
\end{lem}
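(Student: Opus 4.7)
The plan is to establish the cycle $(1) \Leftrightarrow (2)$, $(2) \Leftrightarrow (3)$, $(3) \Rightarrow (4)$, and $(4) \Rightarrow (1)$. The equivalence $(1) \Leftrightarrow (2)$ is essentially formal: the direction $(1) \Rightarrow (2)$ is immediate by integration, while the converse follows from the disk-separation argument spelled out at the start of \cref{preparing}. Under the centrality clause of~(2), the $\zzz$-valued $2$-form $\curv_\conn + X_\prin \cdot \vol_\Sigma$ has vanishing integral over every disk by~\eqref{constraint1}, and the separation argument, applied componentwise to a basis of $\zzz$, forces it to vanish, yielding~(1). For $(2) \Leftrightarrow (3)$, the centrality of the values of $\curv_\conn$ makes the non-abelian Stokes theorem collapse to its abelian counterpart, delivering for every contractible loop $\wuw$ bounding a disk $\DDD$ the identity $\Hol_{\wuw, q_\PP}(\conn) = \exp\!\bigl(-\int_\DDD \curv_\conn\bigr)$. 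Inserting~(2) gives $\Hol_{\wuw, q_\PP}(\conn) = \exp(a_\DDD X_\prin)$, which is~(3); conversely, the disk-separation argument recovers~(2) from~(3). In the direction $(3) \Rightarrow (2)$ one additionally needs centrality of $\curv_\conn$: this is obtained by shrinking a loop around an arbitrary point $q \in \Sigma$ and reading off the curvature from the second-order term in the holonomy expansion, which by~(3) lies in $\zzz$.

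For $(3) \Rightarrow (4)$, the first task is well-definedness of $\lift_{Q_\wMS, \QQQ, \conn}$. Two smooth $\omega_\wMS$-horizontal paths in $\wMS$ from $Q_\wMS$ to a point $T \in \wMS$ concatenate, forward then reverse, to a closed $\omega_\wMS$-horizontal loop at $Q_\wMS$; since $\wMS$ is simply connected, this loop is contractible in $\wMS$, and projecting a null-homotopy exhibits its projection $\wuw$ as a contractible loop in $\Sigma$ bounding a disk $\DDD$. The $\omega_\wMS$-holonomy of $\wuw$ is trivial, which by the normalization $\curv_{\omega_\wMS} = -2\pi \vS$ forces $2\pi a_\DDD = 0 \in \R \subseteq \GR$ and hence $a_\DDD = 0$. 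Condition~(3) then gives $\Hol_{\wuw, \QQQ}(\conn) = \exp(0 \cdot X_\prin) = e$, so the two $\conn$-horizontal lifts in $\PP$ end at the same point. Once $\lift_{Q_\wMS, \QQQ, \conn}$ is well-defined, the $\GR$- and $G$-equivariances of horizontal transport furnish a unique set-theoretic map $\cchi_\conn \colon \GR \to \GG$ intertwining the two right actions; concatenation of paths forces $\cchi_\conn$ to be a continuous homomorphism, and the construction identifies the connection forms through the differential of $\cchi_\conn$, so~\eqref{redu1} is a morphism of principal bundles with connection. Property~(a) is immediate, since the canonical curves $u_j, v_j$ lift $\omega_\wMS$-horizontally to the chosen generators of $\pi_1(\MS, Q_\MS) \cong \Gamma$. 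For~(b), apply $\lift_{Q_\wMS, \QQQ, \conn}$ to the $\omega_\wMS$-horizontal lift of $\partial e^2$: its $\omega_\wMS$-holonomy is $[r]$, identified with $2\pi \in \R$ via~\eqref{uce2} and the normalization $\int_\Sigma \vS = 1$, while~(3) supplies $\conn$-holonomy $\exp(X_\prin)$; equivariance forces $\cchi_\conn([r]) = \exp(X_\prin)$, whence continuous interpolation along $\R \subseteq \GR$ yields $\cchi_\conn(t[r]) = \exp(t X_\prin)$ for every $t \in \R$.

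Finally, $(4) \Rightarrow (1)$ closes the cycle: since~\eqref{redu1} is a morphism of principal bundles with connection, $\curv_\conn$ is the image of $\curv_{\omega_\wMS} = -2\pi \vS$ under the differential of $\cchi_\conn$, which by~(b) restricted to $\R = \mathrm{Lie}(\GR)$ sends $s$ to $\frac{s}{2\pi} X_\prin$; hence $\curv_\conn = -X_\prin \cdot \vol_\Sigma$, which is~(1). The main obstacle is the implication $(3) \Rightarrow (4)$, specifically well-definedness of the pre-reduction map: the argument requires simultaneously exploiting the simple connectivity of $\wMS$, the precise normalizations of $\omega_\wMS$ and $\vS$ fixed in \cref{univex}, and the translation of a trivial-$\omega_\wMS$-holonomy condition into a zero-area disk condition, so that~(3) can then supply the identity $\conn$-holonomy needed to identify the two candidate horizontal lifts in $\PP$.
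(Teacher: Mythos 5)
Your overall cycle is sound, and in two places it takes a genuinely different route from the paper. For the implication to (iv) you build the pre-reduction map directly from the holonomy condition (iii) and extract $\cchi_\conn$ by equivariance, whereas the paper proves (i)$\Rightarrow$(iv) by classifying the bundle: it forms the associated bundle $\prin_\cchi$ from the reference bundle, chooses a topological equivalence $\Phi$ and corrects by a gauge transformation $\beta$, and only then identifies the composite with the pre-reduction map. For centrality in (iii)$\Rightarrow$(ii) you read off the curvature from the second-order term of the holonomy of shrinking loops, whereas the paper passes to the induced connection on the semisimple quotient bundle $\prin_{\overline S}$ and observes that it is flat. Both alternatives are viable, and your direct route to (iv) is arguably more economical. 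Your reduction of the ``non-abelian Stokes theorem'' to the abelian one is, in substance, the paper's decomposition $\omega_\conn=\omega_{\hh}+\omega_{\mathfrak s}$ with $\omega_{\mathfrak s}$ flat. However, two steps as written have genuine gaps.

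First, in the well-definedness argument for $\lift_{Q_{\wMS},\QQQ,\conn}$ you assert that the projection to $\Sigma$ of a closed $\omega_{\wMS}$-horizontal loop, being null-homotopic, ``bounds a disk $\DDD$ in $\Sigma$''. A contractible loop need not bound an embedded disk (the constraints in (iii) are imposed only on disk-bounding loops, and the remark following the lemma stresses that such a disk, when it exists, is unique precisely because one cannot expect it in general). So you cannot feed this loop into (iii) directly. The repair is available within your own cycle: at that stage (1) is already established, so the flatness of $\omega_{\mathfrak s}$ gives $\Hol_{\wuw,q_\PP}(\conn)=\exp\bigl(-\int_{D^2}h^*\curv_\conn\bigr)=\exp\bigl(X_\prin\int_{D^2}h^*\vS\bigr)$ for any null-homotopy $h$ of $\wuw$, while triviality of the $\GR$-holonomy forces $\int_{D^2}h^*\vS=0$ (injectivity of $\exp\colon\R\to\GR$ is what matters here); hence the two $\conn$-horizontal lifts agree.

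Second, the claim that ``the disk-separation argument recovers (2) from (3)'' skips a step. Condition (iii) is group-valued: it yields only $\exp\bigl(-\int_\DDD\curv_\conn\bigr)=\exp(a_\DDD X_\prin)$, i.e.\ $\int_\DDD(\curv_\conn+X_\prin\vol_\Sigma)\in\ker(\exp)=\pi_1(\ZZZ)$, not the Lie-algebra identity \eqref{constraint1}. The paper removes this discrete ambiguity by a shrinking-disk continuity argument: $\varepsilon\mapsto\int_{\DDD_\varepsilon}\sigma$ is a continuous $\pi_1(\ZZZ)$-valued function, hence constant, hence zero. Alternatively, your infinitesimal-holonomy computation, pushed one step further, already gives the pointwise identity $\curv_\conn=-X_\prin\cdot\vol_\Sigma$ and renders the disk-separation step superfluous; but one of these two repairs must be made explicit.
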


\begin{rema} Since the genus $\ell$ of $\Sigma$ is 
(supposed to be)
positive, 
given a contractible loop 
$\wuw$ in $\Sigma$ bounded by a disk, that disk is uniquely determined
by $\wuw$ (up to reparametrization).
In this sense, the right-hand side of~\eqref{constraint11} depends
only on $\wuw$.
\end{rema}

\begin{proof}[Proof of Lemma {\rm \ref{crucial}}]
In view of the observation spelled out at the beginning of the present 
section,
the equivalence of (i) and (ii) is immediate. It is also immediate that
(ii) implies (iii).
We now show that (iii) implies (i).

When $G$ is semisimple,
 the topological characteristic class $X_{\prin}$ of the bundle
$\prin$
is zero,
and the constraints \eqref{constraint11} 
simplify to
\begin{equation}
\mathrm{Hol}_{\wuw,q_\PP}(A)=\{e\}\ \text{whenever $\wuw$ is a contractible loop in}\ \Sigma,
\end{equation}
for a choice  $q\in \Sigma$ of starting point of $\wuw$ and of
a pre-image $q_\PP \in \PP$ of $q$.

Consider now the case where the
center of $\GG$ has dimension $\geq 1$ and,
as before, let $\zzz$ denote 
the center of the Lie algebra $\gg$ of $\GG$. 
The group   
$\GG/\ZZZ= \overline S\cong S/D$ is semisimple, 
and the bundle projection $\prin\colon \PP \to \Sigma$
factors through the induced map from
the orbit manifold $\PP/\ZZZ$ to $\Sigma$, necessarily
a principal
 $\overline S$-bundle, and we denote this bundle by
$\prin_{\overline S}\colon \PP/\ZZZ \to \Sigma$.

Consider a smooth $\GG$-connection $\conn$ 
on $\prin$ that
satisfies the 
constraints~\eqref{constraint11},
and let $\conn_{\overline S}$ denote the 
$\overline S$-connection
it induces on  $\prin_{\overline S}$.
Relative to 
$\prin_{\overline S}$,
the corresponding constraints~\eqref{constraint11} say that,
given a smooth contractible loop $\wuw$ in $\Sigma$, 
for a choice $q\in \Sigma$
of starting point  of $\wuw$ and
of a pre-image $q_\ZZZ \in \PP/\ZZZ$ of $q\in \Sigma$,
\begin{equation*}
\Hol_{\wuw, q_\ZZZ}(\conn_{\overline S})=e \in  \overline S. 
\end{equation*}
Hence the connection $\conn_{\overline S}$ on $\prin_{\overline S}$ is flat.
Consequently the curvature form
$\mathrm{curv}_{\conn} \in \mathcal A^2(\Sigma,\adx)$ 
of $\conn$
is 
{\em central\/} 
in the sense that
the values of $\mathrm{curv}_{\conn}$ lie in
the center $\zzz$ of $\gg$.

Relative to the decomposition $\gg = \hh \oplus \mathfrak s$
of the Lie algebra $\gg$ into its center $\hh$ and its semisimple
constituent $\mathfrak s=[\gg,\gg]$, the connection form $\omega_\conn$
of $\conn$
decomposes as
$\omega_\conn = \omega_\hh +\omega_{\mathfrak s}$ and, accordingly,
the curvature form $\curv_\conn$ of $\conn$ decomposes as
\begin{equation}
\curv_\conn = \dif \omega_\hh + 
\dif \omega_{\mathfrak s} + \tfrac 12[\omega_{\mathfrak s},\omega_{\mathfrak s}].
\end{equation}
Since the values of the curvature form $\curv_\conn$ lie in the center 
$\hh$ of $\gg$,
the
constituent
$d\omega_{\mathfrak s} + \tfrac 12[\omega_{\mathfrak s},\omega_{\mathfrak s}]$
is zero. 

Consider a contractible loop $\wuw\colon [0,1]\to \Sigma$ in $\Sigma$ 
and a disk $\DDD$ in $\Sigma$ that bounds $\wuw$.
Choose  a starting point $q$ of $\wuw$ in $\Sigma$
with $w(0)=q$ and a pre-image $q_\PP \in \PP$
of $q$. Since the loop $\wuw$ is contractible,
the summand $\omega_{\mathfrak s}$ 
of the decomposition 
$\omega_\conn = \omega_\hh +\omega_{\mathfrak s}$ 
does not contribute
to the $\GG$-holonomy of $\wuw$ relative to $\conn$
and $q_\PP$.

To justify this claim,
consider the restriction of
$\prin$ to the disk $\DDD$.
This restriction trivializes, and we fix a smooth $\GG$-equivariant
embedding
$\DDD \times \GG \to \PP$ that covers the embedding $\DDD \to \Sigma$
in such a way that
the embedding sends
the point $(q,e)$ of $\DDD \times \GG$ to the point $q_\PP$ of $\PP$.
Consider the restrictions
$\eta_\hh\in \form^1(\DDD, \hh)$ and 
$\eta_{\mathfrak s}\in \form^1(\DDD, \mathfrak s)$, to $\DDD$, of the 
respective $1$-forms 
$\omega_\hh$  and $\eta_{\mathfrak s}$.
The restriction, to $\DDD$,
of the connection form $\omega_\conn$ of $\conn$
now
coincides with the sum 
$\eta_\conn = \eta_\hh + \eta_{\mathfrak s}\in \form^1(\DDD, \gg)$.
This sum yields a connection form
relative to the covering group $\ZZZ \times S$ of $\GG= H \times_D S$. 

The closed path $\wuw$ now amounts to the boundary
path $\wuw\colon [0,1] \to \DDD$ of $\DDD$ having starting point $q \in \DDD$,
and the holonomy with respect to $\conn$ and $(q_\PP,e)$,
in $\ZZZ \times S$,
 of $\wuw$ 
is the end point of the path
$(\wuw_\ZZZ,\wuw_S)\colon [0,1] \to \ZZZ \times S$
that solves the two differential equations
\begin{alignat*}{2}
\dot \wuw_\ZZZ\wuw_\ZZZ^{-1}&= -\eta_\hh(\dot \wuw),\  
\wuw_\ZZZ(0)&&=e \in \ZZZ,
\\
\dot \wuw_S \wuw_S^{-1}&=-\eta_{\mathfrak s}(\dot \wuw),\  
\wuw_S(0)&&=e \in S.
\end{alignat*}
Since $\eta_{\mathfrak s}$ arises from a flat connection
and since the path $\wuw$ is contractible,
the path $\wuw_S$ is closed.
Consequently that holonomy,
in $\ZZZ \times S$,
 of $\wuw$
is the end point of the path
$(\wuw_\ZZZ,e)\colon [0,1] \to \ZZZ \times S$.

As in \cref{wilson}, let
$\wuw_\PP\colon [0,1] \to \PP$
denote a closed lift of $\wuw$
having the point $q_\PP$ as its starting point.
We can take the composite of 
the boundary path
$\wuw\colon [0,1]\to \DDD$
with the embedding $\DDD \to \DDD\times \GG \to \PP$ but this is not strictly
necessary.
Since  the summand \( \omega_{\mathfrak s} \) of
$\omega_\conn = \omega_\hh +\omega_{\mathfrak s}$ 
does not contribute
to the $\GG$-holonomy of $\wuw$ relative to $\conn$
and $q_\PP$,
in view of \eqref{diff4},
\begin{equation}
\mathrm{Hol}_{\wuw,q_\PP} (\conn)=
\mathrm{exp}\left(-\int_{\wuw_\PP}\omega_\hh\right).
\end{equation}
Since
\begin{equation}
\mathrm{exp}\left(-\int_{\wuw_\PP}\omega_\hh\right)
=
\mathrm{exp}\left(-\int_{\DDD}\dif\omega_\hh\right)
=
\mathrm{exp}\left(
-\int_\DDD\curv_\conn\right),
\end{equation}
the constraint
\eqref{constraint11}
implies that
\begin{equation}
\mathrm{exp}\left(
-\int_\DDD\curv_\conn\right)
= \exp (a_\DDD X_{\prin}) 
\in \ZZZ \ \text{whenever}\ \DDD\ \text{is a disk in}\ \Sigma.
\end{equation}

Consider the $\zzz$-valued 2-form 
$\sigma=\curv_\conn + X_\prin\cdot \vol_\Sigma$
on $\Sigma$.
This 2-form has the property that
$\exp\left(\int_\DDD \sigma\right)=e \in \ZZZ$, for any disk $\DDD$ in 
$\Sigma$.
Consequently $\int_\DDD \sigma\in \pi_1(\ZZZ) \subseteq \zzz$,
for any disk $\DDD$ in 
$\Sigma$. 
Now fix a disk $\DDD$.
Within $\DDD$, consider a descending sequence
$\{ \DDD_\varepsilon \}_\varepsilon$ of disks whose intersection is a single point.
In a suitable coordinate patch, we realize the disks $\DDD_\varepsilon$ as ordinary disks
having radius $\varepsilon \in \mathbb R$.
Now, on the one hand,
the map $f\colon I \to \pi_1(\ZZZ)$ 
given by $f(\varepsilon)=\int_{\DDD_\varepsilon} \sigma$
is continuous and hence constant, and on the other hand,
$\lim_{\varepsilon \to 0}f(\varepsilon)=\lim_{\varepsilon \to 0}\int_{\DDD_\varepsilon} \sigma =0$,
whence $\int_\DDD \sigma=0$.
Since $\DDD$ is an arbitrary disk, we conclude $\sigma=0$, that is,
$\curv_\conn = -X_\prin\cdot \vol_\Sigma$ as asserted.

We now show that (i) implies (iv).
Consider first the special case where the group $\GG$ is abelian, i.e., 
in our notation,
coincides with the compact connected abelian subgroup $\ZZZ$ of $\GG$.
Thus $\prin\colon \PP \to \Sigma$
is momentarily a principal $\ZZZ$-bundle with an $\ZZZ$-connection
$\conn$ having curvature $\curv_\conn = -X_\prin \cdot \vol_\Sigma$.

Recall from \cref{top1} above that the characteristic class $X_\prin$ 
determines the corresponding homomorphism
$\chi_\prin\colon \mathrm U(1) \to \ZZZ$
via the identity ${\chi_\prin(\mathrm e^{i t})=\mathrm{exp}(t X_\prin)}$ 
($t \in [0,2 \pi]$).
Let $\prin_\conn \colon \PP_\conn \to \Sigma$
denote the principal $\ZZZ$-bundle
associated to our reference $\mathrm U(1)$-bundle 
$\prin_{\MS}\colon \MS \to \Sigma$ via $\chi_\prin$,
and denote the resulting smooth map from $\MS$ to $\PP_\conn$
by $\lambda$.
The commutative diagram
\begin{equation}
\begin{CD}
\mathrm U(1)
@>>>
\MS
@>{\prin_{\MS}}>>
\Sigma
\\
@V{\cchi_\prin}VV
@V{\lift}VV
@|
\\
\ZZZ
@>>>
\PP_\conn
@>{\prin_\conn}>>
\Sigma 
\end{CD}
\label{redu12}
\end{equation}
displays the resulting bundle map.
Our reference connection form 
${\omega_{\MS}\colon \T {\MS} \to i \mathbb R}$
on $\prin_{\MS}\colon \MS \to \Sigma$ induces
an $\ZZZ$-connection form on $\prin_\conn$ having curvature form 
$-X_\prin \cdot \vol_\Sigma$,
and we denote this connection form by 
$(\cchi_\prin)_*\omega_{\MS}\colon \T \PP_\conn \to i \mathbb R$.
The bundles $\prin$ and $\prin_\conn$ are topologically equivalent.
Hence there is an $\ZZZ$-equivariant diffeomorphism 
$\Phi\colon \PP \to \PP_\conn$ covering the identity of $\Sigma$
that induces an isomorphism $\prin_\conn \to \prin$ of
principal $\ZZZ$-bundles.
The $\ZZZ$-connection form $(\cchi_\prin)_*\omega_{\MS}$
induces, via $\Phi$,
an $\ZZZ$-connection form $\omega_{\prin,\Phi}\colon \T \PP \to i \mathbb R$
on $\prin$ having, likewise, curvature form $-X_\prin \cdot \vol_\Sigma$.
Since this is the curvature form of $\conn$ as well,
for a suitable bundle automorphism (gauge transformation)
$\beta\colon \PP \to \PP$, 
pulling back 
the connection form
$\omega_\conn$ of $\conn$
via $\beta$ yields $\omega_{\prin,\Phi}$ as 
$\omega_{\prin,\Phi}=\beta^*\omega_\conn$.
By construction, $\lift$ has the form of a pre-reduction
map, now defined on
$\MS$ rather than on $\wMS$,
 and
$\Phi$ and $\beta$ send horizontal paths to horizontal paths.
The composite $\beta\circ \Phi \circ \lambda$ yields
the 
morphism
\begin{equation}
\begin{CD}
\mathrm U(1)
@>>>
\MS
@>{\prin_{\MS}}>>
\Sigma
\\
@V{\cchi_\prin}VV
@V{\beta\circ \Phi \circ\lift}VV
@|
\\
\ZZZ
@>>>
\PP
@>{\prin}>>
\Sigma 
\end{CD}
\label{redu13}
\end{equation}
of principal bundles with connection.
Claim (iv) now boils down to the observation that  
the composite of 
$\beta\circ \Phi \circ \lambda$
with the universal covering projection
$\wMS \to \MS$
admits the description
spelled out as pre-reduction map.

Now we consider the case of a general principal $\GG$-bundle
$\prin \colon \PP \to \Sigma$ with a $\GG$-connection $\conn$
whose curvature form reads $\curv_\conn = -X_\prin \cdot \vol_\Sigma$.
Maintain the notation $\overline \ZZZ = \GG/S \cong \ZZZ/D$ 
and
$\prin_{\overline \ZZZ}\colon \PP/S \to \Sigma$
introduced in \cref{top1} above and let $\overline \conn$ denote the
induced $\overline \ZZZ$-connection on 
the principal $\overline \ZZZ$-bundle $\prin_{\overline \ZZZ}$.
Let $Q_{\PP/S}$ denote the image of $\QQQ$ in $\PP/S$.
The above argument shows that,
with a slight abuse of the notation
$\lift_{Q_{\MS},Q_{\PP/S}, \overline \conn}$,
the $\mathrm U(1)$-bundle $\prin_{\MS}\colon \MS \to \Sigma$
being endowed with the reference connection form 
${\omega_{\MS}\colon \T {\MS} \to i \mathbb R}$,
 the expression for the
pre-reduction map of $\overline \conn$ relative to
$Q_{{\MS}}$ and $Q_{\PP/S}$ yields a morphism
\begin{equation}
\begin{CD}
\mathrm U(1)
@>>>
{\MS}
@>{\prin_{{\MS}}}>>
\Sigma
\\
@V{\cchi_{\prin_{\overline \ZZZ}}}VV
@V{\lift_{Q_{\MS},Q_{\PP/S}, \overline \conn}}VV
@|
\\
\overline H
@>>>
\PP/S
@>{\prin_{\overline \ZZZ}}>>
\Sigma
\end{CD}
\label{redu14}
\end{equation}
of principal bundles with connection.
Since
 the summand $\omega_{\mathfrak s}$ in the decomposition
$\omega_\conn =\omega_\hh +\omega_{\mathfrak s}$
of the connection form $\omega_\conn$ of $\conn$
is the connection form of 
a flat $S$-connection $\conn_{\mathfrak s}$ on the principal $S$-bundle $\PP \to \PP/S$, 
the expression for the
pre-reduction map of $ \conn$ relative to
$Q_{{\wMS}}$ and $\QQQ$ yields the unique lift 
$\widetilde M_{\Sigma}\to \PP$ of
$\lift_{Q_{{\MS}},Q_{\PP/S}, \overline \conn}\colon 
M_{\Sigma}\to \PP/S$ with respect to the flat connection 
$\conn_{\mathfrak s}$
and the choice
of base points $Q_{{\wMS}}$ of ${\wMS}$ and $\QQQ$ of $\PP$. 
The commutative diagram
\begin{equation}
\begin{CD}
\widetilde M_{\Sigma}
@>{\lift_{Q_{{\wMS}},\QQQ, \conn}}>> \PP
\\
@VVV
@VVV
\\
M_{\Sigma}@>{\lift_{Q_{{\MS}},Q_{\PP/S}, \overline \conn}}>> \PP/S
\end{CD}
\end{equation}
displays the situation.

Let $\wuw_r$ denote the \lq\lq boundary path\rq\rq\ 
$\prod u_j v_j u^{-1}_j v_j^{-1}$ of the 2-cell $e^2$ of $\Sigma$;
its horizontal lift
$\widetilde \wuw_r$  in ${\wMS}$
joins $Q_{{\wMS}}$ to $Q_{{\wMS}} [r] \in {\wMS}$.
Let
$\wuw_{\PP,r}$
denote the horizontal lift in $P$ of  $\wuw_r$  
relative to $\conn$ having starting point $Q_\PP$.
Then
$Q_\PP\,\Hol_{\wuw_r,\QQQ}(\conn)$ coincides with the end point $\wuw_{\PP,r}(1)$ 
of $\wuw_{\PP,r}$.
 By construction,
\begin{equation}
\QQQ\,\cchi_{\conn}([r])
= \lift_{Q_{{\wMS}},\QQQ, \conn}(Q_{{\wMS}} [r])
=\wuw_{\PP,r}(1) 
=\QQQ\, \Hol_{\wuw_r,\QQQ}(\conn).
\end{equation}
By (iii), 
$\Hol_{\wuw_r,\QQQ}(\conn)=\mathrm{exp}(X_\prin)$ 
whence
$\cchi_{\conn}([r])= \mathrm{exp}(X_\prin)$. 
Since the homomorphism
 $\cchi_{\conn}$  and the surjective homomorphism
$\GR \to \mathrm U(1)$ 
associated with the universal covering projection
$\wMS \to \MS$
render the diagram
\begin{equation*}
\begin{CD}
\GR @>{\cchi_{\conn}}>> \GG
\\
@VVV
@VVV
\\
\mathrm U(1)
@>{\cchi_{\prin_{\overline \ZZZ}}}>>
\overline \ZZZ
\end{CD}
\end{equation*}
commutative, the homomorphism $\cchi_{\conn}$ satisfies the identity
$\cchi_{\conn}(t[r])=\mathrm{exp}(tX_\prin)$, for $t \in \R$.

It is immediate that (iv) implies (i).
This completes the proof.
\end{proof}

\section{The constraints to be imposed on the based gauge orbit manifold}
\label{constraints}

In the Atiyah--Bott approach, the moduli space arises by symplectic reduction
with respect to the group of gauge transformations,
the vector space $\mathcal A^2(\Sigma,\adx)$
being viewed as the dual of the Lie algebra of infinitesimal 
gauge transformations,
and the map which assigns to a connection its curvature
being interpreted as a momentum mapping.
In the Fr\'echet setting, the dual
of an infinite-dimensional vector space 
 is a delicate notion.
Below we substitute for that momentum mapping  a suitable infinite family of
 smooth $\GG$-equivariant
$\GG$-valued maps on  the orbit manifold
$\mathcal B_{\prin,Q}=\GauQ \backslash \Conn $,
and we refer to these maps as \emph{constraints}. 
The Wilson loop mapping \( \rho \), cf. \eqref{wilson1},
is invariant under the action of the group of based gauge transformations and hence
induces a smooth map
\begin{equation}
\wilsonn \colon \mathcal B_{\prin,Q} \longrightarrow \GG^{2\ell}
\label{wilson2}
\end{equation}
between Fr\'echet manifolds.
With a slight abuse of terminology, we refer to
$\wilsonn$ as {\em Wilson loop mapping\/} as well.
Via evaluation,
the choice of generators
for the presentation \eqref{3.1} of $\pi_1(\Sigma,Q)$
induces an obvious
diffeomorphism
from  $\Hom(F,\GG)$ onto $\GG^{2\ell}$,
and we identify the two manifolds.
The constraints will 
single out a subspace such that the restriction
of the Wilson loop mapping \( \wilsonn \) to that subspace is a 
homeomorphism
 onto
the subspace
$\Hom_{X_\prin}(\Gamma,\GG)_\prin $ of $\Hom(F,\GG)\cong \GG^{2\ell}$
introduced at the end of \cref{top1} above.
The subspace 
of $\mathcal B_{\prin,Q}$ 
 thus cut out by the constraints will recover the space
of  based gauge equivalence classes of smooth central 
Yang-Mills connections on $\prin$.

For a smooth connection $\conn$ on 
$\prin\colon \PP \to \Sigma$, we denote  by $[\conn]$ its class
in the orbit manifold
$\mathcal B_{\prin,Q}=\GauQ \backslash \Conn $.
Consider the maps
\begin{equation}
c_{\wuw,\DDD,\QQQ}\colon\mathcal B_{\prin,Q} \longrightarrow \GG,
\ 
[\conn]\longmapsto 
\mathrm{Hol}_{\wuw,\QQQ} (\conn) \exp (a_\DDD X_{\prin}),
\end{equation}
as $\wuw$ ranges over contractible loops
in $\Sigma$ having the base point $Q$ of $\Sigma$
as starting point and
bounded by disks $\DDD$ in $\Sigma$.
Lemma \ref{horlift} entails that
these maps are smooth.
Let
\[
\mathcal {CYM}_{\prin,Q}\subseteq \mathcal B_{\prin,Q}
\]
be the subspace of $\mathcal B_{\prin,Q}$ cut out by the constraints
\begin{equation}
\label{eq::constraints}
c_{\wuw,\DDD,\QQQ}([\conn])=e \in \GG.
\end{equation}
\cref{crucial} entails that \( \mathcal {CYM}_{\prin,Q} \) 
recovers
 the space of smooth based gauge equivalence classes
of smooth central Yang--Mills connections on $\prin$
and that the values of
the restriction of the Wilson loop mapping
{\rm \eqref{wilson2}}
to $\mathcal {CYM}_{\prin,Q}$
lie in
$\Hom_{X_\prin}(\Gamma,\GG)_\prin$.
We endow
$\mathcal {CYM}_{\prin,Q}$ 
with the topology induced from 
 the Fr\'echet topology of $\mathcal B_{\prin,Q}$
and
 refer to this topology as the {\em Fr\'echet topology\/} on 
$\mathcal {CYM}_{\prin,Q}$.

\begin{thm}
\label{singleout}
The space $\mathcal {CYM}_{\prin,Q}$ is non-empty, 
the values of the
restriction
of the Wilson loop mapping 
\( \wilsonn \), cf. {\rm \eqref{wilson2}},
to the subspace $\mathcal {CYM}_{\prin,Q}$ 
of  $\mathcal B_{\prin,Q}$ lie in $\Hom_{X_\prin}(\Gamma,\GG)_\prin$, and
the resulting map
\begin{equation}
\wilson^{\sharp}\colon 
\mathcal {CYM}_{\prin,Q}
\longrightarrow
\Hom_{X_\prin}(\Gamma,\GG)_\prin,
\label{singleout2}
\end{equation}
defined on 
 $\mathcal {CYM}_{\prin,Q}$ endowed with its Fr\'echet topology,
is a $\GG$-equivariant homeomorphism.
Furthermore, abstractly, the space
$\mathcal {CYM}_{\prin,Q}$
depends only on the bundle $\prin$
but not on the choices
made to carry out its construction.
Finally, with respect to the $G$-orbit stratifications
on both sides,
$\wilson^{\sharp}$ is an isomorphism of stratified spaces
which, on each stratum,
 restricts to a diffeomorphism onto the corresponding stratum
of the target space.
\end{thm}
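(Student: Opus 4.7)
The plan is to derive the theorem by combining \cref{crucial} (which characterizes $\mathcal{CYM}_{\prin,Q}$ in terms of holonomy data) with the bundle-with-connection construction recalled in \cref{top1} and \cref{prop1}, then to upgrade the resulting set-theoretic bijection to a homeomorphism via a Fr\'echet-level compactness argument, and finally to obtain the stratified-diffeomorphism property via the slice analysis of \cref{frechetslices}.

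For bijectivity I would construct a set-theoretic inverse of $\wilson^{\sharp}$ as follows. Given $\cchi \in \Hom_{X_\prin}(\Gamma,\GG)_\prin$, extend $\cchi$ to $\GR$ by $\cchi(t[r]) = \exp(tX_\prin)$ and form the associated bundle $\wMS \times_\cchi \GG$, which is isomorphic to $\prin$ by \cref{prop1}. The connection $\omega_{\wMS}$ pushes forward to a $\GG$-connection of curvature $-X_\prin \cdot \vS$, and transporting it to $\prin$ via a bundle isomorphism normalized so that the image of $Q_{\wMS}$ coincides with $\QQQ$ produces a central Yang--Mills connection $\conn$ with $\wilson^{\sharp}([\conn]) = \cchi$. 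This yields non-emptiness and surjectivity simultaneously. Injectivity is provided by \cref{crucial}(iv): two central Yang--Mills connections with equal Wilson loop images induce the same extended homomorphism $\cchi_\conn$, and the corresponding pre-reduction maps $\lift_{Q_{\wMS},\QQQ,\conn}$ differ at most by a based gauge transformation. $G$-equivariance, relative to the residual action of $G$ on $\mathcal{B}_{\prin,Q}$ via $\mathrm{ev}_{\QQQ}$ and on $\Hom(\Gamma,\GG)$ by simultaneous conjugation, is a direct holonomy calculation.

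Continuity of $\wilson^{\sharp}$ follows from \cref{adapted} together with the universal property of the quotient topology on $\mathcal{B}_{\prin,Q}$. The crux of the proof --- and its principal obstacle --- is continuity of the inverse of $\wilson^{\sharp}$. In the Atiyah--Bott approach this is deduced from Uhlenbeck compactness, which rests on Sobolev-space techniques. My plan, anticipating \cref{ubc} and \cref{lem2}, is to establish a purely Fr\'echet-level compactness statement: every sequence of smooth central Yang--Mills connections on $\prin$ whose Wilson loop images converge in the compact space $\Hom_{X_\prin}(\Gamma,\GG)_\prin$ admits a subsequence whose based gauge classes converge in $\mathcal{B}_{\prin,Q}$. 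Combined with Hausdorffness and the bijectivity already established, this upgrades $\wilson^{\sharp}$ to a homeomorphism, whereupon independence of $\mathcal{CYM}_{\prin,Q}$ from the auxiliary choices follows by naturality of the construction.

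For the stratified-diffeomorphism statement, the $G$-orbit stratifications on both sides are preserved by the equivariant homeomorphism $\wilson^{\sharp}$, so strata are mapped bijectively onto strata. On each stratum one passes at a central Yang--Mills connection $\conn$ to a $\GauQ$-slice furnished by the slice theorem of \cref{frechetslices}, on which the derivative of $\wilson$, given explicitly by \cref{adapted}, is identified by an elliptic Hodge-type argument on the adjoint-bundle complex over $\Sigma$ with a tame linear isomorphism onto the corresponding stratum-tangent subspace of $\Hom(\Gamma,\GG)$ at $\cchi$. The Nash--Moser inverse function theorem \cref{prop::locallyConvexSpace:NashMoserInverseTheorem} then yields a local smooth inverse, and the global bijection of strata upgrades this to the stratum-wise diffeomorphism statement.
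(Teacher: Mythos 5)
Your construction of the inverse of $\wilson^{\sharp}$ --- extending $\cchi$ to $\GR$, forming $\wMS\times_\cchi\GG$ with the connection induced by $\omega_{\wMS}$, and transporting it to $\prin$ by a based bundle isomorphism --- is exactly the paper's \cref{lem1}, and your equivariance and forward-continuity arguments are fine. The genuine gap is in the step you yourself single out as the crux: continuity of the inverse. You reduce it to the assertion that every sequence of central Yang--Mills connections with convergent Wilson loop images has a subsequence whose based gauge classes converge in the Fr\'echet topology, but you give no argument for this assertion, and it is not a soft statement: in the smooth category it \emph{is} the strong Uhlenbeck compactness theorem, whose proof goes through Sobolev estimates --- precisely the machinery the paper sets out to avoid (see \cref{ubc}, which explicitly flags the compactness route as the one \emph{not} taken). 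As written, the proposal therefore either silently imports Uhlenbeck compactness or leaves the central analytic step unproved.

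The paper's \cref{lem2} inverts the logic. It first shows that $\wilson^{\sharp}$ is a \emph{local} homeomorphism: at a central connection $\conn$ one takes a $\Gau$-slice $S_\conn$, applies Nash--Moser to the map $h(\conn')=\curv_{\conn'} \bmod \mathcal H^2_\conn$ (the tame family of right inverses coming from the Green's operator of the deformed Laplacian $T_\alpha$) to cut out a finite-dimensional local model $M_{\conn,Q}$, and then identifies $\tangent_{[\conn]}\wilson_{\natural}$ with twisted integration, which induces an isomorphism $\Ho^1_\conn(\Sigma,\adx)\to\Ho^1(\Sigma,\gg_{\cchi_{\conn}})$; the resulting diffeomorphism $\GG\times_{\Stab_{\conn}}M_{\conn,Q}\to\GG\times_{\Stab_{\cchi_{\conn}}}M_{\wilson(\conn),Q}$ restricts on the constraint sets to the desired local homeomorphism. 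A continuous open bijection is a homeomorphism, and compactness of $\mathcal {CYM}_{\prin,Q}$ then falls out as a corollary rather than being an input. You do sketch essentially this slice/Nash--Moser/elliptic argument in your final paragraph, but only to obtain stratum-wise smoothness \emph{after} assuming the homeomorphism; moving that analysis to the front, and observing that it already delivers the local homeomorphism of the full constraint sets (not merely of individual strata), is what closes the gap without any compactness hypothesis. Finally, the independence of the choices is not mere ``naturality'': the paper needs the uniqueness of the Schur cover and the mapping class group action (\cref{singleout33}, \cref{singleout44}, \cref{singleout55}) to compare Wilson loop mappings built from different canonical systems of curves and different base points.
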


We split the proof of \cref{singleout} into Lemmata \ref{lem1} and \ref{lem2}.
In \cref{dep} below, we  establish the \lq\lq Furthermore\rq\rq\ statement 
claiming the independence of the choices.

\begin{lem}
\label{lem1}
The map
 $\wilson^{\sharp}\colon \mathcal {CYM}_{\prin,Q}
\to 
\Hom_{X_\prin}(\Gamma,\GG)_\prin$
given as 
{\eqref{singleout2}} above
is a bijection.
More precisely: For every member $\cchi$ of
$\Hom_{X_\prin}(\Gamma,\GG)_\prin$,
there is a connection $\conn$ on $\prin$
representing a point of
$\mathcal {CYM}_{\prin,Q}$, unique up to based gauge transformations,
such  that the 
composite 
$\Gamma \to \GR \stackrel{\cchi_\conn}\longrightarrow \GG$,
with the injection $\Gamma \to \GR$,
of the
constituent
$\cchi_\conn\colon \GR \to \GG$ in 
the associated pre-reduction map 
{\rm \eqref{redu1}}
 of $\conn$
 relative to   
$Q_{{\wMS}}$ and $\QQQ$ 
in Lemma {\rm \ref{crucial} (iv)}
coincides with $\cchi\colon \Gamma \to \GG$.
\end{lem}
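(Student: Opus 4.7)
For \emph{surjectivity}, fix $\cchi \in \Hom_{X_\prin}(\Gamma,\GG)_\prin$ and extend it uniquely to a Lie-group homomorphism $\cchi \colon \GR \to \GG$ via the bijection recalled in \cref{top1}. Form the associated principal $\GG$-bundle $\PP_\cchi \defeq \wMS \times_\cchi \GG$ over $\Sigma$ and endow it with the connection $\conn_\cchi$ obtained by pushing the $\GR$-connection form $\omega_{\wMS}$ forward along $\cchi$; the derivative of $\cchi$ on the Lie-algebra level sends the distinguished generator of $\mathrm{Lie}(\GR) = \R$ to $X_\prin$, so the curvature of $\conn_\cchi$ equals $-X_\prin \cdot \vol_\Sigma$. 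By \cref{prop1}, $\PP_\cchi$ belongs to the same connected component of the topological classification as $\prin$; pick a bundle isomorphism $\psi \colon \PP_\cchi \to \PP$ carrying $[Q_{\wMS},e]$ to $\QQQ$, and transport $\conn_\cchi$ back to a connection $\conn$ on $\prin$. Then $[\conn] \in \mathcal {CYM}_{\prin,Q}$ by the equivalence (i)\,$\Leftrightarrow$\,(iv) in \cref{crucial}, and the pre-reduction map attached to $\conn$ recovers precisely $\cchi$ by construction, since $\psi$ intertwines horizontal lifts with those coming from $\omega_{\wMS}$.

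For \emph{injectivity}, suppose $\conn_1$ and $\conn_2$ in $\mathcal {CYM}_{\prin,Q}$ yield the same homomorphism $\cchi \colon \Gamma \to \GG$ via \cref{crucial}\,(iv). The corresponding pre-reduction maps
\[
 \lift_i \defeq \lift_{Q_{\wMS},\QQQ,\conn_i} \colon \wMS \longrightarrow \PP,\quad i=1,2,
\]
are $\cchi$-equivariant with the same $\cchi$ and fulfil $\lift_i(Q_{\wMS})=\QQQ$. The assignment
\[
 \Phi\bigl(\lift_2(T) \cdot g\bigr) \defeq \lift_1(T) \cdot g,
 \qquad T \in \wMS,\ g \in \GG,
\]
is then well defined by $\cchi$-equivariance and yields a $\GG$-equivariant self-map of $\PP$ fixing $\QQQ$, hence a candidate based gauge transformation. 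Since each $\lift_i$ carries $\omega_{\wMS}$-horizontal paths to $\conn_i$-horizontal paths, $\Phi$ intertwines $\conn_2$ and $\conn_1$, giving $\conn_1 \sim \conn_2$ modulo $\GauQ$.

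The main obstacle in both halves is to promote these set-theoretic constructions to the smooth category: verifying that the connection $\conn$ built via $\psi$ and the gauge transformation $\Phi$ built from the two pre-reduction maps are genuinely smooth. Smoothness of $\conn_\cchi$ on $\PP_\cchi$ and therefore of $\conn$ on $\prin$ is immediate from the associated bundle formalism. For $\Phi$, smoothness reduces to the smooth parametric dependence of horizontal lifts on the starting data supplied by \cref{horlift}: around each point of $\Sigma$ one trivialises $\wMS$ and $\PP$ by horizontal sections with respect to $\omega_{\wMS}$ and $\conn_i$, in which charts the maps $\lift_i$ become smooth families of group elements, and $\Phi$ becomes their smooth pointwise quotient. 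Once smoothness is in hand, both the surjectivity and the uniqueness assertions follow.
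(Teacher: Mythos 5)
Your argument is correct and follows essentially the same route as the paper: surjectivity via the associated bundle $\wMS\times_{\cchi}\GG$ carrying the pushed-forward connection of curvature $-X_\prin\cdot\vol_\Sigma$, transported back through a based bundle isomorphism, and uniqueness via the pre-reduction maps of \cref{crucial}\,(iv) --- the paper merely packages your explicit gauge transformation $\Phi$ as the observation that any two based isomorphisms $\PP\to \PP_{\cchi}$ differ by a based gauge transformation. One cosmetic slip in your smoothness discussion: genuine \emph{horizontal sections} of $\omega_{\wMS}$ or of $\conn_i$ do not exist locally because these connections are not flat; what you need is a radial (exponential) gauge built from horizontal lifts along a smooth family of paths, which is exactly what the classical reduction theorem cited in \cref{preparing} (cf.~\cite{MR0152974}) supplies, so the argument goes through.
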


\begin{proof}
Given $\cchi\in \Hom_{X_\prin}(\Gamma,\GG)$,
via the extended homomorphism $\cchi \colon \GR \to \GG$
given by
$\cchi(t[r])=\exp(tX_{\prin})$, for $t \in \mathbb R$,
 the
$\GR$-connection 
form $\omega_{\wMS}\colon \mathrm T \wMS \to \R$ 
on the principal reference
$\GR$-bundle $\prin_{{\wMS}}\colon {\wMS} \to \Sigma$
and the Maurer-Cartan form of $\GG$
yield a $\GG$-connection form
$\omega_{\cchi}\colon \mathrm T \PP_{\cchi} \to \gg$
on the associated principal $\GG$-bundle 
$\prin_{\cchi}\colon 
\PP_{\cchi}\defeq{\wMS} \times_{\cchi} \GG \to \Sigma$.
Below we sometimes write the resulting smooth
principal $\GG$-connection
on $\prin_{\cchi}$ as $A_{\cchi}$.
By construction, the derivative $d\cchi\colon \mathbb R \to \gg$
of 
$\cchi\colon \GR \to \GG$
is given by
$d\cchi(2 \pi)= X_{\prin} \in \gg$. 
Since $\mathrm{curv}_{\omega_{{\wMS}}}=-2\pi \vol_{\Sigma}
\in \mathcal A^2(\Sigma,\mathbb R)$, we conclude that
$\mathrm{curv}_{\omega_\cchi}= -X_{\prin}\cdot\vol_{\Sigma}
\in \mathcal A^2(\Sigma,\zzz)$.

In view of the definition of the connected component
$\Hom_{X_\prin}(\Gamma,\GG)_\prin$ of $\Hom_{X_\prin}(\Gamma,\GG)$,
cf. \cref{top1} above,
the bundles
$\prin$ and $\prin_{\cchi}$ have the same topological type,
that is, are isomorphic.
Since the principal $\GG$-bundles
$\prin$ and $\prin_{\cchi}$ are isomorphic,
there exists a (vertical)
diffeomorphism $\Phi\colon \PP \to \PP_{\cchi}$ that
induces an isomorphism $\prin \to \prin_{\cchi}$
of principal $\GG$-bundles.
Let $\QQQ_{\cchi}=[Q_{{\wMS}}, e]\in \PP_{\cchi}$.
A suitable gauge transformation
of $\prin_{\cchi}$ carries
$\Phi(\QQQ)$ to $\QQQ_{\cchi}$.
Hence we can arrange for the diffeomorphism $\Phi$ to be based
in the sense that
$\Phi(\QQQ)= \QQQ_{\cchi}$.

Pulling back  the connection from \( \omega_\chi \) on \( \prin_\chi \)
along \( \Phi \)  
 yields a  $\GG$-connection $\omega_{\cchi,\Phi}$ 
on $\prin$.
The curvature of \( \omega_{\cchi,\Phi} \) still equals \( -X_{\prin}\cdot\vol_{\Sigma} \).
Another choice $\widehat \Phi\colon P \to P_{\cchi}$ of  
based diffeomorphism
inducing an isomorphism $\prin \to \prin_{\cchi}$
of principal $\GG$-bundles
yields a
 $\GG$-connection form 
$\omega_{\cchi,\widehat \Phi}$
on $\prin$
in such a way that
$\omega_{\cchi,\Phi}$
and
$\omega_{\cchi,\widehat \Phi}$
are based gauge equivalent.
Lemma \ref{crucial} now implies the claim.
\end{proof}

\begin{rema}\label{novel}
The proof of \cref{lem1} essentially involves
the argument for~\cite[Proposition~6.16]{atibottw},
with the connection $\omega_{\wMS}$ on $\prin_{\wMS}\colon \wMS \to
\Sigma$
substituted for the harmonic Yang-Mills connection in~\cite[\S~6]{atibottw}.
As noted in the introduction,
the novelty of our approach resides in the characterization of
the space 
 $\mathcal {CYM}_{\prin,Q}$
merely in terms of  constraints imposed
on the Fr\'echet manifold of smooth based gauge equivalence classes
of all smooth $\GG$-connections on $\prin$.
\end{rema}

\cref{lem1} entails in particular that
the subspace $\mathcal {CYM}_{\prin,Q}$
of $\mathcal B_{\prin,Q}$ cut out by the constraints
\eqref{eq::constraints}
is non-empty.

\begin{rema}
\label{ubc}
By \cref{adapted}, 
the Wilson loop mapping \eqref{wilson1} is smooth, and hence
the Wilson loop mapping \eqref{wilson2} is smooth.
Consequently 
$\wilson^{\sharp}\colon \mathcal {CYM}_{\prin,Q}
\to 
\Hom_{X_\prin}(\Gamma,\GG)_\prin$, cf. \eqref{singleout2}, 
is a continuous bijection
onto the compact subspace $\Hom_{X_\prin}(\Gamma,\GG)_\prin$ 
of the smooth compact manifold
$\Hom(F,\GG)$.
By the strong Uhlenbeck compactness theorem~\cite{MR648356},
for any sequence 
$\{A_j\}$ of smooth Yang-Mills connections with 
constant curvature
(in fact, the hypothesis that the curvature
be uniformly $L^2$-bounded
suffices), there exists a subsequence 
$\{A_i\}$ of smooth Yang-Mills connections and a sequence 
$\{\beta_i\}$
of smooth gauge
transformations such that 
the sequence
$\{\beta_i \cdot A_i\}$ converges 
uniformly
in the $C^{\infty}$-topology 
with all derivatives, that is,
in the Fr\'echet topology,
to a smooth
connection $\conn$.
Hence,
in the Fr\'echet topology,
the space $\mathcal {CYM}_{\prin,Q}$
is as well compact.
Consequently the map \( \wilson^\sharp \) is
a homeomorphism
as asserted.
However, the Fr\'echet slice analysis in
the proof of \cref{lem2} below
avoids that theorem. 
On the other hand, that slice analysis recovers
the statement of the Uhlenbeck compactness theorem in the case at hand.
\end{rema}

\begin{lem}
\label{lem2}
The map $\wilson^{\sharp}\colon \mathcal {CYM}_{\prin,Q}
\to 
\Hom_{X_\prin}(\Gamma,\GG)_\prin$
given as 
{\eqref{singleout2}} above is a $G$-equivariant
homeomorphism that is compatible with the $G$-orbit stratifications
on both sides.
\end{lem}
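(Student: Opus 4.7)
The plan is to proceed in three stages: verify $G$-equivariance, upgrade the continuous bijection to a local homeomorphism via Fréchet slice analysis, and then deduce the stratification compatibility. Continuity of $\wilson^\sharp$ is inherited from the smoothness of $\wilsonn$ (Theorem~\ref{adapted}) via the universal property of the quotient, while bijectivity is furnished by Lemma~\ref{lem1}.

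For $G$-equivariance: the residual $G$-action on $\mathcal{B}_{\prin,Q}$ comes from the quotient $\Gau/\GauQ \cong G$ induced by $\mathrm{ev}_{\QQQ}$, while $G$ acts on $\Hom(F,G)\cong G^{2\ell}$ by simultaneous conjugation. A gauge transformation $\phi$ with $\mathrm{ev}_{\QQQ}(\phi)=g$ transforms each based holonomy by conjugation with $g$, so $\rho$ is $G$-equivariant; this descends to $\wilsonn$ and hence to $\wilson^\sharp$. In particular, $\wilson^\sharp$ preserves stabilizer subgroups of $G$ pointwise.

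To upgrade continuity to a homeomorphism, I would prove that $\wilson^\sharp$ is a local homeomorphism at every point; combined with Lemma~\ref{lem1} this yields the global statement. Fix $[A_0] \in \mathcal{CYM}_{\prin,Q}$ with image $\chi_0$. By the Fréchet slice theorem (Section~\ref{frechetslices} and Proposition~\ref{l1}), choose a slice $S_{A_0}\subseteq \Conn$ at $A_0$ furnishing a smooth local parametrization of a neighborhood of $[A_0]$ in $\mathcal{B}_{\prin,Q}$. The inverse of $\wilson^\sharp$ is realized by the construction of Lemma~\ref{lem1}: to a homomorphism $\chi$ close to $\chi_0$ one associates the bundle $P_\chi = \wMS \times_\chi G$ endowed with its canonical connection $\omega_\chi$ and pulls back along a based bundle isomorphism $\Phi_\chi \colon P \to P_\chi$. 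The task is to show that $\Phi_\chi$ may be chosen to depend continuously on $\chi$, thereby identifying a neighborhood of $\chi_0$ in $\Hom_{X_\prin}(\Gamma,G)_\prin$ with a neighborhood of $[A_0]$ in $\mathcal{CYM}_{\prin,Q}$. I would accomplish this by applying the Nash--Moser inverse function theorem (Proposition~\ref{prop::locallyConvexSpace:NashMoserInverseTheorem}) to the tame smooth map from $S_{A_0}$ to $\Hom_{X_\prin}(\Gamma,G)_\prin$ induced by $\wilsonn$, using the explicit derivative formula of Theorem~\ref{adapted} to exhibit a tame right inverse of its linearization modulo the finite-dimensional $G$-stabilizer directions; the target tangent space is to be identified with the group cohomology $\mathrm{H}^1(\pi_1(\Sigma),\mathrm{Ad}\,\chi_0)$ in the standard way.

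For the stratification statement: since $\wilson^\sharp$ is $G$-equivariant and bijective, it sends the orbit-type stratum of type $(H)$ on the source onto that of type $(H)$ on the target. On each such stratum, the Fréchet slice analysis yields a finite-dimensional smooth local model on the source, and deformation theory of representation varieties makes the target stratum a smooth manifold. The local homeomorphism obtained above restricts to a smooth map between these finite-dimensional manifolds whose linearization is an isomorphism (again by Theorem~\ref{adapted} together with the cohomological identification), so the restriction is a diffeomorphism by the classical implicit function theorem. The main obstacle is the Nash--Moser step: verifying that, after passing to a slice transverse to the $\GauQ$-orbit, the linearization of $\wilsonn$ admits a tame right inverse modulo the $G$-stabilizer directions. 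Supplying this tame inverse is precisely what replaces Uhlenbeck compactness in Remark~\ref{ubc} and is the central point where the Fréchet slice technology of Section~\ref{frechetslices} is indispensable.
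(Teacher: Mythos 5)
Your skeleton (continuity from \cref{adapted}, bijectivity from \cref{lem1}, upgrading to a homeomorphism by establishing a local homeomorphism near each point via slices, then reading off the stratification statement) agrees with the paper's, and the $G$-equivariance argument is fine. The gap lies in the central analytic step. You propose to apply the Nash--Moser theorem to the map from the slice $S_{\conn_0}$ to $\Hom_{X_\prin}(\Gamma,\GG)_\prin$ induced by $\wilsonn$, producing a tame right inverse of its linearization ``modulo the finite-dimensional $G$-stabilizer directions.'' This cannot work as stated. First, $\Hom_{X_\prin}(\Gamma,\GG)_\prin$ is in general a singular subvariety of $\GG^{2\ell}$, not a manifold, so it is not a legitimate target for an inverse/implicit function theorem. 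Second, and decisively, the Wilson loop mapping only records holonomies along the $2\ell$ generators, whereas $\mathcal{CYM}_{\prin,Q}$ is cut out by holonomy constraints along \emph{all} contractible loops: the preimage under $\wilsonn$ of $\Hom_{X_\prin}(\Gamma,\GG)$ inside a slice is an infinite-dimensional set strictly larger than the piece of $\mathcal{CYM}_{\prin,Q}$ you need, and the linearization of $\wilsonn$ on the slice has infinite-dimensional kernel. A right inverse of that linearization therefore gives no control whatsoever over $\mathcal{CYM}_{\prin,Q}$, and your subsequent appeal to the ``classical implicit function theorem'' on strata presupposes a finite-dimensional local model on the source that you never construct.

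The missing idea is precisely that finite-dimensional local model. The paper applies Nash--Moser not to the Wilson loop map but to the curvature map $h\colon S_\conn \to B^2_\conn$, $\conn'\mapsto \curv_{\conn'} \bmod \mathcal H^2_\conn$. The tame family of right inverses of $\tangent h$ comes from the Green's operators of the deformed elliptic operators $T_\alpha = \dif_{\conn+\alpha}\dif^*_\conn$ (elliptic of index zero for small $\alpha$, with kernel equal to $\mathcal H^2_\conn$ by upper semi-continuity of the kernel dimension), so $h$ is a submersion near $\conn$ and $M_\conn = h^{-1}(0)\cap U$ is a finite-dimensional manifold with tangent space $\Ho^1_\conn$. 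Only after this reduction is the Wilson loop mapping restricted to the finite-dimensional $M_{\conn,Q}\subseteq \mathcal B_{\prin,Q}$; its injectivity on tangent vectors is then established by comparing the twisted de Rham complex with the cellular cochain complex with local coefficients (the cohomological identification you allude to), and the ordinary finite-dimensional inverse function theorem closes the argument, yielding both the local homeomorphism on the constraint sets and the stratumwise diffeomorphisms. Your proposal names the right tools but points the Nash--Moser theorem at the wrong map; without the elliptic/Green's-operator step applied to the curvature, the proof does not go through.
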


\begin{proof}
First we show that the bijection \( \rho^\sharp \) is a local homeomorphism.

Consider a smooth connection $\conn$ on $\prin$
that represents a point of 
$\mathcal {CYM}_{\prin,Q}$.
Since the connection $\conn$ is central, the operator $\dif_\conn$ 
of covariant 
derivative turns $\form^*(\Sigma,\adx)$
into a cochain complex. 
We denote the resulting cohomology by $\Ho^*_\conn$ and we 
use the notation
\begin{align}
Z^0_{\conn}&= Z^0_{\conn}(\Sigma,\adx)=\mathrm{ker}(\dif_{\conn}\colon 
\form^0(\Sigma,\adx)
\to
\form^1(\Sigma,\adx))
\\
Z^1_{\conn}&= Z^1_{\conn}(\Sigma,\adx)=\mathrm{ker}(\dif_{\conn}\colon 
\form^1(\Sigma,\adx)
\to
\form^2(\Sigma,\adx))
\\
B^1_{\conn}&= B^1_{\conn}(\Sigma,\adx)=\mathrm{im}(\dif_{\conn}\colon 
\form^0(\Sigma,\adx)
\to
\form^1(\Sigma,\adx))
\\
B^2_{\conn}&= B^2_{\conn}(\Sigma,\adx)=\mathrm{im}(\dif_{\conn}\colon 
\form^1(\Sigma,\adx)
\to
\form^2(\Sigma,\adx)).
\end{align}
In particular, the cokernel of $\dif_\conn\colon 
\form^1(\Sigma,\adx) \to \form^2(\Sigma,\adx)$
yields the associated second cohomology group $\Ho^2_\conn$.
We choose a complement
$\mathcal H^2_\conn\subseteq  \form^2(\Sigma,\adx)$
for $B^2_\conn\subseteq  \form^2(\Sigma,\adx)$
such that the direct sum decomposition
$\form^2(\Sigma,\adx) = B^2_\conn \oplus \mathcal H^2_\conn$
is a homeomorphism in the Fr\'echet topology.
Then the projection 
$\form^2(\Sigma,\adx) \to \Ho^2_\conn$,
restricted to
$\mathcal H^2_\conn$, is an isomorphism of finite-dimensional vector spaces.

Let
$S_\conn \subseteq \Conn$ 
be a Fr\'echet slice
for the action of the group 
$\Gau$
of gauge transformations on $\Conn$. Then
\begin{equation}
\mathrm T_\conn\Conn =B^1_\conn \oplus \mathrm T_\conn(S_\conn).
\end{equation}
Consider the smooth map
\begin{equation}
h\colon S_\conn \to B^2_\conn,\ \conn' \mapsto \curv_{\conn'} 
\mod \mathcal H^2_\conn,\ 
\conn' \in S_\conn.
\label{smooth1}
\end{equation}
The derivative \( \tangent_{\conn'} h\colon \mathrm T_{\conn'}S_\conn \to B^2_\conn \)
at the point $\conn'$ of $S_\conn$ is the composite
\begin{equation}
\mathrm T_{\conn'}S_\conn \stackrel{\dif_{\conn'}|}\longrightarrow 
\form^2 
\stackrel{\mathrm{proj}}
\longrightarrow
B^2_\conn.
\end{equation}
By construction, the derivative $\tangent_\conn h = \dif_\conn$ at the point $\conn$ is surjective.
Our aim is to conclude that, 
for an open neighborhood $U$ of $\conn$ in $S_\conn$,
the pre-image
$h^{-1}(0)\cap U$
is a smooth manifold, necessarily finite-dimensional,
 having as tangent space at $\conn$
the finite-dimensional vector  space 
$Z^1_\conn\cap \mathrm T_\conn S_\conn\cong \Ho^1_\conn$.
To this end,
we use the Nash--Moser theorem to show that \( h \) is a submersion 
at \( \conn \).
Hence we need to verify that the derivative \( \tangent_ {\conn'} h \) is 
surjective in an open neighborhood of \( \conn \) in \( S_{\conn} \) and 
that the family of right inverses is tame smooth.

Let $\conn + \alpha$ be a point of $\Conn$, where
 $\alpha \in \form^1(\Sigma,\adx)$,
and suppose that $\conn + \alpha \in S_\conn$.
Consider the  operators
\begin{align*}
	L_\alpha&\colon \form^1(\Sigma,\adx) \longrightarrow \form^2(\Sigma,\adx), 
\quad \beta \mapsto \dif_{\conn + \alpha} \beta = \dif_\conn \beta + [\alpha \wedge \beta],
\\
T_\alpha &= L_\alpha \circ L_0^* = \dif_{\conn + \alpha} \dif^*_\conn\colon \form^2(\Sigma,\adx) \longrightarrow \form^2(\Sigma,\adx).
\end{align*}
The operator  
\( T_\alpha  \) 
satisfies the identity 
\[
 T_\alpha (\eta) = \triangle_\conn(\eta) + [\alpha \wedge \dif^*_\conn (\eta)]\quad 
(\eta \in \form^2(\Sigma,\adx)) .
\]
In particular, \( T_0 \) is the covariant Laplacian on $\form^2(\Sigma,\adx)$ 
relative to $\conn$
and so 
is an elliptic operator of index zero.
Since the symbol map is continuous, 
for small \( \alpha \),
the deformed operator \( T_\alpha \) 
is still elliptic and has index zero.
We claim that the induced operator
\begin{equation}
B^2_\conn \stackrel{\mathrm {inj}}\longrightarrow
\form^2(\Sigma,\adx)
\stackrel{ T_\alpha }
\longrightarrow
\form^2(\Sigma,\adx)
\stackrel{\mathrm{proj}} 
\longrightarrow B^2_\conn
\label{indop}
\end{equation}
is invertible.
In view of the Fredholm alternative,
since   \( T_\alpha \) has index zero,
 it is enough to show that
\( \ker T_\alpha = \Ho^2_\conn \).
The identity 
\( {(\dif_\conn \dif^*_\conn \eta, \eta)}_{L^2} 
= \lVert \dif^*_\conn \eta \rVert^2_{L^2} \) 
shows that \( \Ho^2_\conn = \ker T_0 = \ker \dif_\conn  \dif^*_\conn \) 
coincides with the kernel of \( \dif^*_\conn \).
Hence \( \ker T_0 \subseteq \ker T_\alpha \). 
The converse inclusion follows from the upper semi-continuity of the 
dimension of the kernel of an elliptic 
operator~\cite[Corollary~19.1.6]{MR2304165}, 
that is, \( \dim \ker T_\alpha \leq \dim \ker T_0 \) 
for small \( \alpha \).
Hence the induced operator \eqref{indop}
is invertible and, with a slight abuse of notation, we denote
this operator on $B^2_\conn$  by \( T_\alpha \) as well.
The inverse operator \( T^{-1}_\alpha \) 
arises from the Green's operator and, 
by~\cite[II.3.3.3~Theorem~p.~158]{MR656198}, 
is therefore tame.

Since \( L_\alpha \circ (L_0^* \circ T^{-1}_\alpha) = \id_{B^2_\conn} \)
for small $\alpha$, we obtain the tame family
\( \{L_0^* \circ T^{-1}_\alpha\}_\alpha \) of right inverses for the family 
\( \{\tangent_{\conn + \alpha} h\}_\alpha \).
Hence the Nash--Moser inverse function theorem implies that \( h \) is a 
submersion near \( \conn \).
In other words, there exists an open neighborhood $U$ of $\conn$ in $S_\conn$ 
such that the pre-image $h^{-1}(0) \cap U$ is a smooth manifold $M_\conn$ 
having as tangent space at $\conn$ the finite-dimensional vector space 
$Z^1_\conn\cap \mathrm T_\conn S_\conn\cong \Ho^1_\conn$.
Since the canonical map \( \mathcal H^2_\conn \to \Ho^2_\conn\) is 
a linear isomorphism,
the manifold \( M_\conn \) contains connections \( \conn' \in S_\conn \) 
that are \( C^\infty \)-close to \( \conn \) and have curvature map
\( \ast\, \curv_{\conn'}: \PP \to \gg \) constant on the holonomy 
bundle of \( \conn \).

The action of the group $\Gau$ of gauge transformations on $\form_{\prin}$
restricts to a smooth action of the stabilizer subgroup $\Stab_{\conn}$ 
of $\conn$ on $M_{\conn}$. Moreover, in  a $\Stab_{\conn}$-neighborhood 
of $\conn$ in  $M_{\conn}$, the projection to $\mathcal B_{\prin,Q}$ 
is a diffeomorphism onto a smooth finite-dimensional 
$\Stab_{\conn}$-submanifold  $M_{\conn,Q}$ of $\mathcal B_{\prin,Q}$.
The restriction  to $M_{\conn,Q}$ of the Wilson loop mapping 
\( \wilson_\flat\colon \mathcal B_{\prin,Q} \to \GG^{2\ell} \)
given as \eqref{wilson2}  above yields a smooth map
\begin{equation}
\wilson_{\natural}\colon M_{\conn,Q} \to 
\Hom(F,\GG)\cong \GG^{2\ell}
\label{smooth11}
\end{equation}
having injective tangent map 
\[
\tangent_{[\conn]} \wilson_{\natural}\colon 
\mathrm T_{[\conn]}M_{\conn,Q} \to 
\mathrm T_{\wilson(\conn)}\Hom(F,\GG)
\]
at the point $[\conn]$ and, up to left translation by
$\wilson(\conn) \in \Hom(F,\GG) \cong \GG^{2\ell}$, 
that tangent map takes the form
\begin{equation}
\tangent_{[\conn]}\wilson_{\natural}\colon 
\mathrm T_{[\conn]}M_{\conn,Q} \to 
\gg^{2\ell}.
\label{form1}
\end{equation}
We shall shortly justify the injectivity claim.
Passing, if need be, to a smaller open submanifold of
$M_{\conn,Q}$ containing the point $[\conn]$, 
we can arrange for the image 
\( M_{\wilson(\conn),Q} \defeq \wilson_\natural (M_{\conn, Q}) 
\subseteq \Hom(F,\GG)
\)
to be an ordinary (embedded) submanifold of \( \Hom(F, \GG) \). 

Since the connection $\conn$ is central,
the homomorphism $\cchi_{\conn}=\wilson(\conn)\colon F \to \GG$ 
descends to a homomorphism
$\pio \to \GG/\ZZZ\cong \overline S$ and hence induces,
via the adjoint action of $\GG$,
a $\pio$-module structure on $\gg$. 
We write the resulting
$\pio$-module as $\gg_{\conn}$ or $\gg_{\cchi_{\conn}}$.
Then the cellular cochain complex of $\Sigma$ with local coefficients
is defined and takes the form
\begin{equation}
\begin{CD}
\gg_{\cchi_{\conn}}
@>{d_{\cchi_{\conn}}}>>
\gg_{\cchi_{\conn}}^{2\ell}
@>{d_{\cchi_{\conn}}}>>
\gg_{\cchi_{\conn}}.
\end{CD}
\label{coch2}
\end{equation}
Twisted integration yields a morphism
of cochain complexes
\begin{equation}
\begin{CD}
\form^0(\Sigma,\adx)
@>{\dif_{\conn}}>>
\form^1(\Sigma,\adx)
@>{\dif_{\conn}}>>
\form^2(\Sigma,\adx)
\\
@VVV
@V{\int}VV
@VVV
\\
\gg_{\cchi_{\conn}}
@>{d_{\cchi_{\conn}}}>>
\gg_{\cchi_{\conn}}^{2\ell}
@>{d_{\cchi_{\conn}}}>>
\gg_{\cchi_{\conn}} 
\end{CD}
\end{equation}
inducing an isomorphism on cohomology, in particular an isomorphism
$\Ho^1_\conn(\Sigma, \adx) \to \Ho^1(\Sigma,\gg_{\cchi_{\conn}})$.
Given $\chi \colon \Gamma \to \GG$ such that the value $\chi([r])$ lies in 
$\ZZZ$ (the connected component of the center of $\GG$), we  use the notation
\begin{align}
Z^0_{\cchi}&= Z^0_{\cchi}(\Sigma,\ggcchi)=\mathrm{ker}(d_{\cchi}\colon 
C^0(\Sigma,\ggcchi)
\to
C^1(\Sigma,\ggcchi))
\\
Z^1_{\cchi}&= Z^1_{\cchi}(\Sigma,\ggcchi)=\mathrm{ker}(d_{\cchi}\colon 
C^1(\Sigma,\ggcchi)
\to
C^2(\Sigma,\ggcchi))
\\
B^1_{\cchi}&= B^1_{\cchi}(\Sigma,\ggcchi)=\mathrm{im}(d_{\cchi}\colon 
C^0(\Sigma,\ggcchi)
\to
C^1(\Sigma,\ggcchi))
\\
B^2_{\cchi}&= B^2_{\cchi}(\Sigma,\ggcchi)=\mathrm{im}(d_{\cchi}\colon 
C^1(\Sigma,\ggcchi)
\to
C^2(\Sigma,\ggcchi)).
\end{align}
By \cref{adapted}, up to left translation
by the value $\wilson(\conn) \in \Hom(F,\GG)\cong \GG^{2\ell}$,
the twisted integration mapping
$\int\colon \form^1(\Sigma,\adx) \to \gg_{\cchi_{\conn}}^{2\ell}
$
is the derivative~\eqref{diff2}
of the  Wilson loop mapping \( \wilson \)
at the point $\conn$ of $\form_{\prin}$.
Hence the image of \( \mathcal H^1_{\conn} = \mathrm T_{[\conn]}M_{\conn,Q} \)
under 
$\tangent_{[\conn]}  
\wilson_{\natural}\colon T_{[\conn]}M_{\conn,Q} \to \gg^{2\ell}$
is a linear subspace of $\gg_{\cchi_{\conn}}^{2\ell}$
which lies in the subspace
$Z^1_{\cchi_{\conn}} \subseteq \gg_{\cchi_{\conn}}^{2\ell}$
of 1-cocycles relative to 
$d_{\cchi_{\conn}}\colon  \gg_{\cchi_{\conn}}^{2\ell} \to  \gg_{\cchi_{\conn}}$
and maps, under the projection from
$Z^1_{\cchi_{\conn}}$ to the cohomology group 
$\Ho^1(\Sigma,\gg_{\cchi_{\conn}})$,
 isomorphically onto
that cohomology group.
Consequently, \( \tangent_{[\conn]} \wilson_{\natural} \) is injective and thus
\( \wilson_\natural \)
is a diffeomorphism
$M_{\conn,Q} \to M_{\wilson(\conn),Q}$
onto its image $M_{\wilson(\conn),Q}$,
at least in a neighborhood of
$[\conn]$ in $M_{\conn,Q}$.

The evaluation map \( \mathrm{ev}_{\QQQ}\colon \Gau \to \GG \)
identifies
the stabilizer $\Stab_{\conn}$ of $\conn$ 
 with the stabilizer
$\Stab_{\cchi_{\conn}}\subseteq \GG$ of the value
$\cchi_{\conn} =\wilson(\conn) \in M_{\wilson(\conn),Q}$.
Thus the diffeomorphism 
\( \wilson_\natural\colon M_{\conn,Q} \to M_{\wilson(\conn),Q} \)
extends to a diffeomorphism
\begin{equation}
\wilson_{\natural}\colon \GG \times_{\Stab_{\conn}}M_{\conn,Q} 
\to \GG \times_{\Stab_{\cchi_{\conn}}} M_{\wilson(\conn),Q}
\subseteq
\Hom(F,\GG)\cong \GG^{2\ell}.
\label{smooth2}
\end{equation}
Consequently the restriction
\begin{equation}
\big(\GG \times_{\Stab_{\conn}}M_{\conn,Q}\big) 
\cap \mathcal{CYM}_{\prin,Q}
\to
\big( \GG \times_{\Stab_{\cchi_{\conn}}} M_{\wilson(\conn),Q}\big)
\cap
\Hom_{X_\prin}(\Gamma,\GG)_\prin
\end{equation}
is a homeomorphism.
By construction, this homeomorphism is the restriction,
to the intersection
$
\left(\GG \times_{\Stab_{\conn}}M_{\conn,Q}\right) 
\cap \mathcal{CYM}_{\prin,Q}$,
of the map 
\( \wilson^{\sharp} \) given as
\eqref{singleout2} above.

Since our reasoning works for any point $[\conn]$ of
$\mathcal {CYM}_{\prin,Q}$, we conclude that 
\( \wilson^{\sharp} \)
is a homeomorphism.
Moreover, the above Fr\'echet slice analysis shows that \( \wilson^{\sharp} \) is compatible with the orbit type stratifications
on both sides and 
that,
 on each stratum,
 \( \wilson^{\sharp} \) restricts to a diffeomorphism onto the corresponding stratum
of the target space.
\end{proof}

\begin{rema}
\label{rem1}
Let $\GG = \mathrm U(1)$ and 
endow our
reference 
$\mathrm U(1)$-bundle $\prin_{\MS} \colon {\MS} \to \Sigma$
introduced in \cref{top1} above with 
a connection $\conn$
whose curvature form $\curv_\conn$ equals $-2 \pi i \vol_\Sigma$, that is,
represents a point of
$\mathcal{CYM}_{\prin_{\MS},Q}$.
Under these circumstances, the pre-reduction map of $\conn$
relative to $Q_{\wMS}$ and $Q_{\MS}$
induces a gauge transformation of $\prin_{\MS}$ that identifies
the reference connection 
chosen in \cref{top1}
with $\conn$. In particular, the space
$\mathcal{CYM}_{\prin_{\MS},Q}$ reduces to a point.
This recovers the fact that a complex line bundle on $\Sigma$
has a  harmonic connection that is unique up to gauge transformations.
\end{rema}

\section[Reconstruction of the space
of gauge equivalence classes of all Yang-Mills connections]{Fr\'echet-reconstruction of the space
of gauge equivalence classes of all Yang-Mills connections}
\label{all}
In this section we reconstruct the space of gauge equivalence classes of 
Yang-Mills connections on $\Sigma$ within our Fr\'echet setting.
Our procedure parallels the corresponding classical description of the
topological decomposition of the space of Yang-Mills connections
given in \cite[Section 6]{atibottw}.

The obvious surjection $\mathrm{can}\colon F \to \Gamma$
defined on the free group $F$ on the chosen generators
$x_1,y_1,\dots, x_\ell,y_\ell$ of the fundamental group $\pi_1(\Sigma,Q)$ 
extends to the surjective homomorphism
\begin{equation}
\mathbb R \times F \longrightarrow \GR,\ 
(t,w) \longmapsto t[r]\,\mathrm{can}(w),\ t \in \mathbb R,\ w \in F.
\end{equation}
This surjective homomorphism, in turn, induces an injection
of $\Hom(\GR, \GG)$ into $\Hom(\mathbb R \times F, \GG)$.
The restriction to the central copy of $\mathbb R$ 
in $\GR$ generated by $[r]$
of 
a homomorphism $\varphi\colon \mathbb R \times F \to \GG$
is a $1$-parameter subgroup of $\GG$ and therefore of the kind
$t \mapsto \mathrm{exp}(t X_\varphi)$ ($t \in \mathbb R$)
for some  uniquely determined member $X_{\varphi}$ of $\gg$.
Hence
the assignment to $\varphi\in\Hom(\mathbb R \times F, \GG)$
of
$
(X_{\varphi},\varphi(x_1),\varphi(y_1),\dots, \varphi(x_\ell),\varphi(y_\ell)) 
$
injects $\Hom(\mathbb R \times F, \GG)$ into $\gg \times \GG^{2\ell}$ 
and hence yields an injection
\begin{equation}
\Hom(\GR, \GG) \longrightarrow \gg \times \GG^{2\ell}.
\label{inj1}
\end{equation}
We endow $\Hom(\GR, \GG)$ with the induced topology.

Let $X\in \gg$. We denote by $\GG_X\subseteq \GG$ the stabilizer of $X$
under the adjoint action, necessarily
a closed connected subgroup of $\GG$,
cf.~\cite[\S 32 Theorem 32.16 p.~259]{MR770935},
and we denote by $\Hom_X(\GR,\GG)$
 the space, if non-empty, of homomorphisms
$\cchi$  from $\GR$ to $\GG$ 
having the property that $\cchi(t[r])=\exp(tX)$.
Since the copy of $\mathbb R$ in $\GR$ 
generated by $[r]$ is central, the values of any $\varphi \in 
\Hom_X(\GR,\GG)$ lie in
$\GG_X$, that is, the canonical injection map from
$\Hom_X(\GR,\GG_X)$
to $\Hom_X(\GR,\GG)$
is a homeomorphism.
Given an adjoint orbit $\mathcal O \subseteq \gg$,
we denote by $\Hom_{\mathcal O}(\GR,\GG)$
the space of homomorphisms
$\cchi$ from $\GR$ to $\GG$ 
having the property that  $\cchi(t[r])=\exp(tY)$, 
for some $Y\in \mathcal O$.
The space $\Hom_{\mathcal O}(\GR,\GG)$ is
the total space of a fiber bundle over $\mathcal O$
having compact fiber,
the fiber over
$Y \in \mathcal O$ being the compact space
$\Hom_Y(\GR,\GG_Y)$,
and so $\Hom_{\mathcal O}(\GR,\GG)$ is therefore itself compact.

We return to our principal $\GG$-bundle $\prin\colon \PP \to \Sigma$.
Given an adjoint orbit $\mathcal O$, we denote by
$\Hom_{\mathcal O}(\GR,\GG)_\prin$ the subspace, if non-empty, of 
$\Hom_{\mathcal O}(\GR,\GG)$ 
that consists of the homomorphisms $\varphi\colon \GR \to \GG$
in $\Hom_{\mathcal O}(\GR,\GG)$ which have the property that the 
associated principal $\GG$-bundle
$\prin_{\widetilde M} \times_{\varphi}\GG$
is topologically equivalent to $\prin$.
When $\mathcal O$ consists of a single point $X$, we
use the notation 
$\Hom_{X}(\GR,\GG)_\prin$.
We then denote by
$\Hom(\GR,\GG)_\prin$
 the disjoint union of the spaces of the kind 
$\Hom_{\mathcal O}(\GR,\GG)_\prin$,
where $\mathcal O$ ranges over the adjoint orbits such that
$\Hom_{\mathcal O}(\GR,\GG)_\prin$
is non-empty.
In terms of the notation thus established,
the restriction mapping 
\begin{equation}
\Hom_{X_{\prin}}(\GR,\GG)
\longrightarrow 
\Hom_{X_{\prin}}(\Gamma,\GG)
\label{rest1}
\end{equation} 
yields a homeomorphism from 
\( \Hom_{X_{\prin}}(\GR,\GG)_\prin \)
onto the space
$\Hom_{X_\prin}(\Gamma,\GG)_\prin$ used in 
\cref{singleout} 
above.

Recall 
from the proof of \cite[Theorem 6.7]{atibottw} that Yang--Mills connections 
on \( \prin \) arise from central Yang--Mills connections on certain 
subbundles. Indeed, consider a connection \( \conn \) on \( \prin \) that 
satisfies the Yang--Mills equation. 
Then the \( \GG \)-equivariant map 
\( \ast\, \curv_\conn: \PP \to \gg \) is constant along horizontal paths. 
Let $X= \ast\, \curv_\conn (Q_\PP)$ and
\( \PP_{\conn, X} = \{ p \in \PP: \ast\, \curv_\conn (p) = X \} \).
The bundle projection $\prin$ restricts to a
principal \( \GG_X \)-bundle projection 
\( \prin_X\colon \PP_{\conn, X} \to \Sigma\), and the connection $\conn$
restricts to a connection $\conn_X$ on $\prin_X$ having the property that
$\curv_{A_X}= -X \cdot\vol_\Sigma$.

For \( X \in \gg \), denote by \( \mathcal {YM}_{\prin,Q,X} \) 
the subspace, if non-empty, 
of \( \mathcal B_{\prin,Q} \) that consists of the points \( [A] \) 
characterized as follows: There exists a reduction of structure group  
$\prin_X\colon \PP_X \to \Sigma$
of $\prin$ to $\GG_X$ 
that is based in the sense that,
relative to the associated injection
of $\PP_X$ into the total space $\PP$ of $\prin$,
the base point $\QQQ$ of $\PP$ lies in $\PP_X$, such that some connection
$\conn$ representing the class $[A]$
restricts to a connection $A_X$ on $\prin_X$
 having the property that
$\curv_{A_X}= -X \cdot\vol_\Sigma$.
The residual \( \GG \)-action then corresponds to the adjoint action 
on \( \gg \), i.e., \( x \in \GG \) maps \( \mathcal {YM}_{\prin,Q,X} \) 
to \( \mathcal {YM}_{\prin,Q,\AdAction_x X} \). 
This leads us to define, for a given adjoint orbit 
$\mathcal O$, the \( \GG \)-invariant subspace 
\( \mathcal {YM}_{\prin,Q,\mathcal O}\) of \( \mathcal B_{\prin,Q} \) 
to be the union of the spaces 
\( \mathcal {YM}_{\prin,Q,X} \) for \( X \in \mathcal{O} \).
The above argument  shows that the space 
$\mathcal {YM}_{\prin,Q}\subseteq \mathcal B_{\prin,Q}$ of based gauge 
equivalence 
classes of Yang--Mills connections on \( \prin \) is the disjoint union of 
the spaces of the kind 
$\mathcal {YM}_{\prin,Q,\mathcal O}$,
where $\mathcal O$ ranges over those adjoint orbits such that
$\mathcal {YM}_{\prin,Q,\mathcal O}$ is non-empty.
The space 
$\mathcal {YM}_{\prin,Q}$ is our \emph{Fr\'echet version of the space
of based gauge equivalence classes of Yang-Mills connections on
$\prin$}.
Likewise we take the Fr\'echet version $\mathcal {YM}_{\GG,\Sigma,Q}$ 
of the space
of based gauge equivalence classes of all Yang-Mills connections over
$\Sigma$ relative to $\GG$
to be the disjoint union of the spaces of the kind 
$\mathcal {YM}_{\prin,Q}$,
where $\prin$ ranges over all topological types of $\GG$-bundles
on $\Sigma$.
By construction,
each connected component 
of $\mathcal {YM}_{\prin,Q}$
is characterized by Fr\'echet constraints, and the same is true of
$\mathcal {YM}_{\GG,\Sigma,Q}$.
In this sense, the spaces
$\mathcal {YM}_{\prin,Q}$
and
$\mathcal {YM}_{\GG,\Sigma,Q}$
are entirely characterized by Fr\'echet constraints.
We say that a space of the kind
$\mathcal {YM}_{\prin,Q,\mathcal O}$
and, likewise, one the kind
$\Hom_{\mathcal O}(\GR,\GG)_\prin$
 {\em is 
defined\/} when its defining condition is non-vacuously
satisfied.

\begin{thm}
\label{count}
Given an adjoint orbit $\mathcal O$ in $\gg$, the space
$\mathcal {YM}_{\prin,Q,\mathcal O}$ is 
defined
 if and only if
the space
$\Hom_{\mathcal O}(\GR,\GG)_\prin$ is defined
and, when this happens to be the case, 
the Wilson loop mapping
$\wilsonn \colon \mathcal B_{\prin,Q} \to \GG^{2\ell}$,
cf. \eqref{wilson2},
induces a $\GG$-equivariant homeomorphism 
\begin{equation}
\mathcal {YM}_{\prin,Q,\mathcal O}
\longrightarrow
\Hom_{\mathcal O}(\GR,\GG)_\prin.
\label{orbit}
\end{equation}
Hence
the Wilson loop mapping~$\wilsonn$
induces a $\GG$-equivariant homeomorphism 
\begin{equation}
\mathcal {YM}_{\prin,Q}
\longrightarrow
\Hom(\GR,\GG)_\prin.
\label{Sigma}
\end{equation}
Furthermore,
 as $\prin$
ranges over all topological types of principal $\GG$-bundles
on $\Sigma$, 
the Wilson loop mapping~$\wilsonn$  
induces a homeomorphism 
\begin{equation}
\mathcal {YM}_{\GG,\Sigma,Q}
\longrightarrow
\Hom(\GR,\GG).
\label{all1}
\end{equation}
Moreover, abstractly the spaces $\mathcal {CYM}_{\prin,Q}$ and
$\mathcal {CYM}_{\GG,\Sigma,Q}$ do not depend on the choices
made to carry out their construction.
Finally, each of the above homeomorphisms is compatible with the
$G$-orbit type stratifications and is, hence,
an isomorphism of stratified spaces.
\end{thm}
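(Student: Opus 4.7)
The plan is to reduce \cref{count} to repeated applications of \cref{singleout}. For fixed $X\in\gg$, every class $[\conn]\in\mathcal{YM}_{\prin,Q,X}$ is represented by a connection $\conn$ whose $\GG$-equivariant map $\ast\,\curv_\conn\colon\PP\to\gg$ is constant of value $X$ on the holonomy bundle through $\QQQ$; hence $\conn$ restricts to a connection $\conn_X$ on the based $\GG_X$-reduction $\prin_X\colon\PP_X\to\Sigma$ with $\curv_{\conn_X}=-X\cdot\vol_\Sigma$. Since $X$ lies in the center of $\mathrm{Lie}(\GG_X)$, $\conn_X$ is a central Yang--Mills connection on $\prin_X$, and \cref{singleout} applied to $\prin_X$ identifies $\mathcal{CYM}_{\prin_X,Q}$ with $\Hom_X(\Gamma,\GG_X)_{\prin_X}$. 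Composing with the inclusion $\Hom_X(\Gamma,\GG_X)\hookrightarrow\Hom_X(\Gamma,\GG)$, which the text notes is a homeomorphism onto $\Hom_X(\Gamma,\GG)$, and with the bijection $\Hom_X(\GR,\GG)\to\Hom_X(\Gamma,\GG)$ from \cref{top1}, yields the desired correspondence fiberwise over $X$.

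The existence equivalence in the first part of the theorem then follows directly: $\mathcal{YM}_{\prin,Q,X}$ is non-empty precisely when $\prin$ admits a based $\GG_X$-reduction of the correct topological type, which corresponds exactly to the existence of a homomorphism $\varphi\in\Hom_X(\GR,\GG)$ for which $\prin_{\wMS}\times_\varphi\GG$ is topologically equivalent to $\prin$. Running this argument for each $X$ in an adjoint orbit $\mathcal O$ and using that a change of base point in $\PP$ covering $Q$ acts by left translation in $\GG^{2\ell}$ (cf. \cref{adapted}) and by the adjoint action on $X$, I obtain the $\GG$\nobreakdash-equivariant homeomorphism~\eqref{orbit}. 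Taking disjoint unions over $\mathcal O$ yields \eqref{Sigma}, and a further disjoint union over topological types of $\prin$ produces~\eqref{all1}, after observing that the fiber bundle structure of $\Hom_{\mathcal O}(\GR,\GG)$ over $\mathcal O$ is respected by the Wilson loop mapping.

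For the stratification claim, I would invoke the Fr\'echet slice analysis carried out in the proof of \cref{lem2}, applied to the principal $\GG_X$-bundle $\prin_X$. That analysis produces local models in $\mathcal{CYM}_{\prin_X,Q}$ of the form $\GG_X\times_{\Stab_\conn}M_{\conn,Q}$ that map diffeomorphically, stratum by stratum, onto local models in $\Hom_X(\Gamma,\GG_X)_{\prin_X}$; pushing these forward through the inclusion $\GG_X\hookrightarrow\GG$ and through the fibration over $\mathcal O$ gives the required $\GG$\nobreakdash-equivariant stratified isomorphism that restricts to a diffeomorphism on each orbit-type stratum. The independence from choices follows as in \cref{singleout}: different choices of base point $\QQQ$, of canonical system, and of the reference connection $\omega_{\MS}$ are linked by based gauge transformations, by automorphisms of $F$ that respect the relator $r$, and by elements of $\GG$, and each of these is intertwined by a natural $\GG$-equivariant homeomorphism at the level of our Fr\'echet models.

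The main obstacle, I expect, is the continuity of the inverse of~\eqref{orbit} across the orbit $\mathcal O$: unlike the situation of \cref{singleout} where the curvature value $X_\prin$ was fixed, the value $X=\ast\,\curv_\conn(\QQQ)$ now varies, and one must check that the assignment $X\mapsto\prin_X$ of reduced bundles fits into an honest fiber bundle over $\mathcal O$ with locally constant topological type, so that the slice analysis extends uniformly. Verifying this, without recourse to Uhlenbeck compactness, will require combining the tame smoothness of the Wilson loop mapping from \cref{adapted} with a parametrized version of the Nash--Moser submersion argument of \cref{lem2}, letting $X$ vary in a neighbourhood within its orbit.
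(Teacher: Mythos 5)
Your core reduction --- representing a class in $\mathcal {YM}_{\prin,Q,X}$ by a connection that restricts to a central Yang--Mills connection on the based $\GG_X$-reduction $\prin_X$, then invoking \cref{singleout} for that reduced bundle together with the homeomorphisms $\Hom_X(\GR,\GG_X)\to\Hom_X(\GR,\GG)$ and $\Hom_X(\GR,\GG)\to\Hom_X(\Gamma,\GG)$ --- is exactly the strategy of the paper's proof, and the existence equivalence and the disjoint-union bookkeeping for \eqref{Sigma} and \eqref{all1} are as you describe.

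The step you flag as the ``main obstacle,'' however, has a far simpler resolution than the parametrized Nash--Moser argument you sketch, and this is where your proposal stops short of a proof. The paper never lets $X$ vary analytically within $\mathcal O$: it fixes a single $X\in\mathcal O$ and a single $\chi_0\in\Hom_X(\GR,\GG)$, applies \cref{singleout} once to the $\GG_X$-bundle $\prin_{\chi_0}$, and then observes that \emph{both} sides of \eqref{orbit} are $\GG$-spaces induced from $\GG_X$-spaces: extension of the structure group and the residual $\GG$-action furnish a $\GG$-equivariant homeomorphism $\GG\times_{\GG_X}\mathcal {CYM}_{\prin_{\chi_0},Q}\to\mathcal {YM}_{\prin_{\chi_0,\GG},Q,\mathcal O}$, while conjugation furnishes $\GG\times_{\GG_X}\Hom_{X}(\Gamma,\GG_X)_{\prin_{\chi_0}}\to\Hom_{\mathcal O}(\Gamma,\GG)_{\prin_{\chi_0,\GG}}$. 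The fiberwise homeomorphism of \cref{singleout} then extends automatically and $\GG$-equivariantly to the associated bundles over $\mathcal O\cong\GG/\GG_X$; no uniform slice analysis over the orbit is needed, and continuity of the inverse comes for free because the source $\GG\times_{\GG_X}\mathcal {CYM}_{\prin_{\chi_0},Q}$ is compact and the target is Hausdorff. Your proposed route --- showing that $X\mapsto\prin_X$ assembles into a fiber bundle over $\mathcal O$ and rerunning the Nash--Moser submersion argument with $X$ as a parameter --- would at best reprove this induction statement by hand; as written it is an unverified claim, so your argument is incomplete at precisely the point you identify, even though the intended conclusion is correct.
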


\begin{proof}
Consider $X \in \gg$  such that $\Hom_X(\GR,\GG)$ is non-empty and choose
$\cchiz\in \Hom_X(\GR,\GG)$. We  denote by $\Hom_{X}(\GR,\GG)_\cchiz$
the connected component of $\Hom_X(\GR,\GG)$ that contains $\cchiz$.
Since  \( [r] \) is central in \( \GR \), the values of any 
$\varphi \in \Hom_{X}(\GR,\GG)_\cchiz$ lie in $\GG_X$ whence
the canonical injection from $\Hom_{X}(\GR,\GG_X)_\cchiz$
to $\Hom_{X}(\GR,\GG)_\cchiz$ is a homeomorphism. The associated bundle
\begin{equation}
\prin_{\cchiz}= \prin_{\widetilde M} \times_{\cchiz}\GG_X 
\colon \PP_{\cchiz}= \widetilde M \times_{\cchiz}\GG_X \longrightarrow \Sigma
\label{asso2}
\end{equation}
is a principal $\GG_X$-bundle, and the canonical restriction map
\begin{equation}
\Hom_{X}(\GR,\GG_X)_\cchiz \longrightarrow
\Hom_{X}(\Gamma,\GG_X)_{\prin_\cchiz}
\end{equation}
is a homeomorphism.
We take the point $Q_{\cchiz} =[(Q_{\wMS},e)]$ 
as the base point of $\PP_{\cchiz}$. By 
\cref{singleout}, 
\begin{equation}
\wilson^\sharp \colon \mathcal{CYM}_{\prin_{\cchiz},Q}
\longrightarrow \Hom_{X}(\Gamma,\GG_X)_{\prin_\cchiz}
\end{equation}
is a \( \GG \)-equivariant homeomorphism.

Extension of the structure group $\GG_X$ to $\GG$ yields the principal 
$\GG$-bundle
\begin{equation}
\prin_{\cchiz, \GG}\colon \PP_{\cchiz}\times_{\GG_X}\GG \to \Sigma
\end{equation}
on $\Sigma$.
The action $\mathcal G_{\prin_{\cchiz,\GG}} \times \form_{\prin_{\cchiz,\GG}} \to \form_{\prin_{\cchiz,\GG}}$
of  the group $\mathcal G_{\prin_{\cchiz,\GG}}$ 
of smooth gauge transformations of
$\prin_{\cchiz,\GG}$
on the space $\form_{\prin_{\cchiz,\GG}}$
of smooth connections 
on $\prin_{\cchiz,\GG}$
induces a $\GG$-equivariant homeomorphism
\begin{align*}
\GG \times_{\GG_X} \mathcal{CYM}_{\prin_{\cchiz},Q}  &
\longrightarrow \mathcal{YM}_{\prin_{\cchiz,\GG},\mathcal O,Q} .
\end{align*}
Likewise, the conjugation action
\begin{align*}
\GG \times \Hom(\Gamma,\GG) \longrightarrow \Hom(\Gamma,\GG),
\ 
(x,\cchi) \mapsto {}^x \cchi, \ {}^x \cchi(y)=x\cchi(y) x^{-1},\ 
x\in \GG, y \in \Gamma,\cchi \in  \Hom(\Gamma,\GG),
\end{align*}
of $\GG$ on  $\Hom(\Gamma,\GG)$ 
induces a $\GG$-equivariant homeomorphism
\begin{align*}
\GG \times_{\GG_X}\Hom_{X}(\Gamma,\GG_X)_{\prin_\cchiz}
&
\longrightarrow \Hom_{\mathcal O}(\Gamma,\GG)_{\prin_{\cchiz, \GG}}.
\end{align*}
Consequently, the Wilson loop mapping induces
a $\GG$-equivariant homeomorphism 
\begin{equation*}
 \mathcal {YM}_{\prin_{\cchiz,\GG},Q,\mathcal O}
\longrightarrow
\Hom_{\mathcal O}(\GR,\GG)_{\prin_{\cchiz,\GG}}
\end{equation*}
of the kind \eqref{orbit}.

The rest of the proof is straightforward. We leave the details to the reader.
\end{proof}

\begin{rema}
\cref{count}
is a based version
of~\cite[Theorem 6.7]{atibottw}, but phrased
in our Fr\'echet setting.
We can paraphrase \cref{count} by saying that
our Fr\'echet construction precisely recovers
the Atiyah--Bott spaces of based gauge equivalence classes
of Yang-Mills connections.
\end{rema}

To spell out what \cref{singleout,count} entail for the 
ordinary moduli spaces of Yang-Mills connections, we use the notation
\begin{align}
\mathcal {CYM}_{\prin}&=\GG\backslash \mathcal {CYM}_{\prin,Q}
\\
\mathcal {YM}_{\prin}&=\GG\backslash\mathcal {YM}_{\prin,Q}
\\
\mathcal {YM}_{\GG,\Sigma}&=\GG\backslash\mathcal {YM}_{\GG,\Sigma,Q}
\\
\intertext{and}
\mathrm{Rep}_{X_{\prin}}(\GR,\GG)_\prin&=
\GG\backslash\Hom_{X_{\prin}}(\GR,\GG)_\prin\,
(\cong \GG\backslash\Hom_{X_\prin}(\Gamma,\GG)_\prin)
\\
\mathrm{Rep}(\GR,\GG)_\prin&=
\GG\backslash\Hom(\GR,\GG)_\prin
\\
\mathrm{Rep}(\GR,\GG)&=
\GG\backslash\Hom(\GR,\GG) .
\end{align}
Plainly, the homeomorphism \( \wilson^\sharp \), cf.  \eqref{singleout2},
induces homeomorphisms
\begin{align}
\mathcal {CYM}_{\prin}
&\longrightarrow
\mathrm{Rep}_{X_{\prin}}(\GR,\GG)_\prin,
\label{singleout31}
\\
\mathcal {YM}_{\prin}
&\longrightarrow
\mathrm{Rep}(\GR,\GG)_\prin,
\label{singleout4}
\\
\mathcal {YM}_{\GG,\Sigma}
&\longrightarrow
\mathrm{Rep}(\GR,\GG),
\label{singleout5}
\end{align}
and these homeomorphism preserve the stratifications on both sides.
In view of \cref{singleout,count}, the spaces
$\mathcal {CYM}_{\prin}$,
$\mathcal {YM}_{\prin}$,  and
$\mathcal {YM}_{\GG,\Sigma}$
do not depend on the choices made to construct the
Wilson loop mapping.
The orbit spaces $\mathcal {CYM}_{\prin}$,
 $\mathcal {YM}_{\prin}$, and
$\mathcal {YM}_{\GG,\Sigma}$ recover,
respectively,
the moduli space of gauge equivalence classes of
central Yang-Mills connections on $\prin$,
that of Yang-Mills connections on $\prin$,
and that of all Yang-Mills 
connections on $\Sigma$ relative to $\GG$.

\section{Dependence on the Choices}
\label{dep}
The Wilson loop mapping \( \wilson_\flat \), cf. \eqref{wilson2},
and the resulting map \( \wilson^\sharp \), cf. \eqref{singleout2},
depend on the choices
of the point $\QQQ$ of the total space $\PP$ of
$\prin$ and the canonical system
$u_1,v_1,\dots,u_\ell,v_\ell$
of curves in $\Sigma$.
Let $\overline u_1,\overline v_1,\dots,\overline u_\ell,\overline v_\ell$
be another canonical system of curves in $\Sigma$ that start at $Q$.
Their based homotopy classes 
$\overline x_1,\overline y_1,\dots,\overline x_\ell,\overline y_\ell$
constitute a system of generators of the fundamental group
$\pio=\pi_1(\Sigma,Q)$ of $\Sigma$ at $Q$.
Define the relator by $\overline r = 
\left[\overline x_1,\overline y_1\right] \cdot
\dots
\cdot
\left[\overline x_\ell,\overline y_\ell\right] 
$. The association
\[
 (u_1, v_1,\dots, u_\ell, v_\ell)
\longrightarrow
(\overline u_1,\overline v_1,\dots,\overline u_\ell,\overline v_\ell)
\]
induces an automorphism of $\Gamma$ and one of $\pio$ that make the diagram
\begin{equation}
\begin{CD}
0
@>>>
\mathbb Z\langle [r]\rangle
@>>>
\Gamma
@>>>
\pio
@>>>
1
\\
@.
@|
@VVV
@VVV
@.
\\
0
@>>>
\mathbb Z\langle [r]\rangle
@>>>
\Gamma
@>>>
\pio
@>>>
1
\end{CD}
\label{3.22}
\end{equation}
commutative.
Since the closed paths
$[u_1, v_1] \cdot \ldots\cdot [u_\ell, v_\ell]$ and
$[\overline u_1, \overline v_1] \cdot \ldots\cdot [\overline u_\ell, \overline v_\ell]$
acquire the same orientation,
the two members $[r]$ and $[\overline r]$ of $\Gamma$ coincide.
This is a consequence of the fact that, up to within isomorphism, 
the group $\Gamma$
is the uniquely determined Schur cover of $\pio=\pi_1(\Sigma,Q)$.
Hence the identity symbol in \eqref{3.22} makes sense.
That automorphism, in turn, determines a member of
the mapping class group of $\Sigma$. These observations yield a proof 
of the following.

\begin{prop}
\label{singleout33}
In the situation of 
{\rm \cref{singleout}} above,
the base point $\QQQ$ of the total space $\PP$ of $\prin$
being fixed, two isomorphisms  
$\mathcal {CYM}_{\prin,Q}
\to
\Hom_{X_\prin}(\Gamma,\GG)_\prin$
of stratified spaces that arise from
two Wilson loop mappings of the kind \eqref{singleout2}
associated to distinct choices of
canonical systems
of curves in $\Sigma$
differ by a homeomorphism
$\Hom_{X_\prin}(\Gamma,\GG)_\prin
\to
\Hom_{X_\prin}(\Gamma,\GG)_\prin$,
necessarily an automorphism of stratified spaces,
induced by an automorphism of $\Gamma$ that represents a
member of the mapping class group of $\Sigma$. \qed 
\end{prop}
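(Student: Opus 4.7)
The plan is to follow the outline indicated in the paragraph immediately preceding the statement and turn it into a proof, by constructing the relevant automorphism of $\Gamma$, verifying that it intertwines the two Wilson loop mappings, and identifying it with an element of the mapping class group.

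First, I would observe that the change of canonical system
$(u_1,v_1,\dots,u_\ell,v_\ell)\rightsquigarrow (\overline u_1,\overline v_1,\dots,\overline u_\ell,\overline v_\ell)$
gives a new generating system $(\overline x_j,\overline y_j)$ of $\pio=\pi_1(\Sigma,Q)$ and hence a unique automorphism $\phi$ of $\pio$ determined by $\phi(x_j)=\overline x_j$ and $\phi(y_j)=\overline y_j$. Since both canonical systems come from cell decompositions consistent with the chosen orientation of $\Sigma$, the boundary relators have the same orientation class, which is why diagram~\eqref{3.22} commutes with the identity on the left column; in particular $\phi(r)$ and $r$ agree in $\Gamma$, so $\phi$ lifts uniquely to an automorphism $\widetilde\phi$ of $\Gamma$ that fixes $[r]$, by the universal property of the Schur cover (maximal stem extension) recalled at the beginning of \cref{univex}. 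The fixing of $[r]$ implies that $\widetilde\phi$ extends further to an automorphism of $\GR$ that is the identity on the central $\R$-subgroup.

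Second, I would compare the two maps $\wilson^{\sharp}$ and $\overline\wilson^{\sharp}$. For $[\conn]\in\mathcal{CYM}_{\prin,Q}$, \cref{crucial}(iv) identifies $\wilson(\conn)$ with the restriction to $F\to\Gamma$ of a homomorphism $\chi_\conn\colon\GR\to\GG$. Because $\chi_\conn$ is a genuine homomorphism from $\Gamma$ (not merely a twisted cocycle on $\pi_1$), the holonomy of a loop that represents any given word in the generators equals the value of $\chi_\conn$ on the corresponding element of $\Gamma$; consequently
\[
\overline\wilson^{\sharp}([\conn])(\overline x_j)
=\Hol_{\overline u_j,\QQQ}(\conn)
=\chi_\conn(\widetilde\phi(x_j))
=\wilson^{\sharp}([\conn])\bigl(\widetilde\phi(x_j)\bigr),
\]
and similarly for $\overline y_j$. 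Hence
$\overline\wilson^{\sharp}=\widetilde\phi^{*}\circ\wilson^{\sharp}$,
where $\widetilde\phi^{*}$ denotes precomposition with $\widetilde\phi$. Precomposition with $\widetilde\phi$ is a $\GG$\nobreakdash-equivariant homeomorphism of $\Hom(\Gamma,\GG)$ that preserves the subspace $\Hom_{X_\prin}(\Gamma,\GG)_{\prin}$ (since $\widetilde\phi$ fixes $[r]$ and permutes connected components in a manner compatible with the topological type of $\prin$), and it is hence an automorphism of stratified spaces on the target side.

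Third, I would identify $\phi$ with an element of the mapping class group. By the Dehn--Nielsen--Baer theorem for a closed orientable surface of genus $\ell\geq 1$, every orientation\nobreakdash-preserving automorphism of $\pi_1(\Sigma,Q)$ is induced by an orientation\nobreakdash-preserving self\nobreakdash-homeomorphism of $\Sigma$ fixing $Q$. Since $\phi$ sends one canonical system to another compatible with the same orientation of $\Sigma$, it is orientation\nobreakdash-preserving, and thus determines a class in the mapping class group; $\widetilde\phi$ is then the unique lift of this class to $\Gamma$. Putting the three steps together yields the proposition. The main technical point is the second step: one must carefully distinguish between the topological equality of words in $\pio$ and the corresponding lifted equality in $\Gamma$, and it is precisely the centrality hypothesis on $\conn$ (equivalently, the extension property in \cref{crucial}(iv)) that ensures the holonomy descends to a well\nobreakdash-defined homomorphism on $\Gamma$, so that the word calculation in $\Gamma$ is meaningful.
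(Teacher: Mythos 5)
Your proposal follows essentially the same route as the paper, whose proof is precisely the paragraph preceding the statement: the change of canonical system induces an automorphism of $\pio$ that lifts (by uniqueness of the Schur cover and the orientation argument forcing $[\overline r]=[r]$) to an automorphism of $\Gamma$ fixing $[r]$, which represents a mapping class and intertwines the two Wilson loop mappings. You supply more detail than the paper does---notably the explicit intertwining computation via \cref{crucial}(iv) and the appeal to Dehn--Nielsen--Baer---but the underlying argument is the same.
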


\begin{prop}
\label{singleout44}
In the situation of Theorem {\rm \ref{singleout}} above,
the base point $Q \in \Sigma$ being fixed,
two isomorphisms of stratified spaces  of the kind~\eqref{singleout2}
that arise from
the Wilson loop mappings
associated to the same choice of
canonical system
of curves in $\Sigma$
but one relative to  $\QQQ$
and the other one
relative to  $\QQQ \cdot x$
for some $x \in \GG$
differ by the homeomorphism
$\Hom_{X_\prin}(\Gamma,\GG)_\prin
\to
\Hom_{X_\prin}(\Gamma,\GG)_\prin
$
induced by conjugation with $x \in \GG$,
necessarily an automorphism of stratified spaces. \qed 
\end{prop}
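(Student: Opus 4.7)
The proof will rest on the basic transformation property of horizontal lifts under the right action of the structure group on the total space of $\prin$. Given a smooth connection $\conn$ on $\prin$, a smooth path $\wuw$ in $\Sigma$ starting at $Q$, and the horizontal lift $\widehat \wuw\colon [0,1] \to \PP$ of $\wuw$ with $\widehat \wuw(0) = \QQQ$ relative to $\conn$, the path $\widehat \wuw \cdot x\colon [0,1] \to \PP$ is again horizontal (because right translation by $x \in \GG$ preserves the horizontal distribution of $\conn$) and has starting point $\QQQ \cdot x$. Thus it is the horizontal lift of $\wuw$ relative to $\conn$ and $\QQQ \cdot x$. Combining this with the defining identity $\QQQ \cdot \Hol_{\wuw,\QQQ}(\conn) = \widehat \wuw(1)$ yields
\begin{equation*}
\QQQ \cdot x \cdot \Hol_{\wuw,\QQQ \cdot x}(\conn) = \widehat \wuw(1) \cdot x = \QQQ \cdot \Hol_{\wuw,\QQQ}(\conn) \cdot x,
\end{equation*}
so that $\Hol_{\wuw,\QQQ \cdot x}(\conn) = x^{-1} \Hol_{\wuw,\QQQ}(\conn) \, x$.

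Applying this identity to each of the canonical curves $u_1, v_1, \ldots, u_\ell, v_\ell$ shows that the two Wilson loop mappings of the kind \eqref{wilson1} associated respectively with $\QQQ$ and with $\QQQ \cdot x$ (and the same canonical system) are related by componentwise conjugation by $x$ on $\GG^{2\ell}$. Passing to the quotient by the action of $\GauQ$, this componentwise conjugation descends to a homeomorphism of $\Hom(F, \GG)$ and, since it preserves the relator $r$ and sends $\mathrm{exp}(X_\prin)$ to itself (as $X_\prin \in \hh$ and conjugation fixes central elements under $\exp$), it restricts to a homeomorphism $\Hom_{X_\prin}(\Gamma, \GG)_\prin \to \Hom_{X_\prin}(\Gamma, \GG)_\prin$ which is precisely the action of $x$ in the $\GG$-action on $\Hom_{X_\prin}(\Gamma, \GG)_\prin$ by conjugation.

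To conclude, the two maps $\wilson^\sharp$ of the kind \eqref{singleout2} arising from $\QQQ$ and $\QQQ \cdot x$ differ by this conjugation homeomorphism. Because the $\GG$-action on $\Hom_{X_\prin}(\Gamma, \GG)_\prin$ permutes the orbit-type strata and acts by diffeomorphisms on each, conjugation by $x$ is automatically an isomorphism of stratified spaces in the sense of \cref{singleout}, and the proposition follows. There is no genuine obstacle here: the entire content is a direct consequence of $\GG$-equivariance of horizontal lifts together with the stratification-preserving nature of the $\GG$-action, both of which are already built into the framework established earlier in the paper.
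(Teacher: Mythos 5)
Your argument is correct and is exactly the standard computation the paper treats as immediate (the proposition is stated with a \qed and no written proof): the transformation rule $\Hol_{\wuw,\QQQ\cdot x}(\conn)=x^{-1}\Hol_{\wuw,\QQQ}(\conn)\,x$ obtained from right-translating horizontal lifts, applied to each canonical curve, together with the observation that conjugation fixes $\exp(X_\prin)$ (central) and preserves the relator condition and the orbit-type strata. You supply the details the paper leaves implicit; no gap.
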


\begin{prop}
\label{singleout55}
In the situation of Theorem {\rm \ref{singleout}} above,
with a new choice of
 base points $Q'$ of $\Sigma$ and
$Q'_{\MS}$ of $\MS$, a choice of a homotopy class of paths in
$\MS$ from $Q_{\MS}$ to $Q'_{\MS}$ induces an  isomorphism
of stratified spaces
from $\Hom_{X_\prin}(\pi_1(\MS,Q_{\MS}),\GG)_\prin$
onto
$\Hom(\pi_1(\MS,Q'_{\MS}) ,\GG)_{\prin}$.
\end{prop}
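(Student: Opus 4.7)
The plan is to exploit the classical change-of-base-point isomorphism of fundamental groups, transport it to Hom spaces by precomposition, and then verify that precomposition preserves both the constraint on the central element and the connected component singled out by the topological type of $\prin$.

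Concretely, a homotopy class of paths $\alpha$ in $\MS$ from $Q_\MS$ to $Q'_\MS$ induces the standard isomorphism
\[
\phi_\alpha\colon \pi_1(\MS, Q'_\MS) \longrightarrow \pi_1(\MS, Q_\MS), \qquad [\gamma] \longmapsto [\alpha \cdot \gamma \cdot \alpha^{-1}].
\]
Precomposition with $\phi_\alpha$ yields a map
\[
\phi_\alpha^{*}\colon \Hom(\pi_1(\MS, Q_\MS), \GG) \longrightarrow \Hom(\pi_1(\MS, Q'_\MS), \GG).
\]
Since the topology on these Hom spaces is that of evaluation at a finite system of generators and $\phi_\alpha$ is a group isomorphism, $\phi_\alpha^{*}$ is a homeomorphism of manifolds.

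First, I would verify that $\phi_\alpha^{*}$ restricts to a map between the subspaces carved out by the $X_\prin$-condition. The centers of $\pi_1(\MS, Q_\MS)$ and $\pi_1(\MS, Q'_\MS)$ are infinite cyclic, each generated by the homotopy class of a fiber loop of $\prin_{\MS}\colon \MS \to \Sigma$. Lifting $\alpha$ to a horizontal path $\widetilde{\alpha}$ in $\wMS$ with respect to $\omega_{\wMS}$ and invoking the $\GR$-equivariance of the fiber structure shows that $\phi_\alpha$ sends the fiber generator at $Q'_\MS$ precisely to the fiber generator at $Q_\MS$; hence the condition $\chi([r]) = \exp(X_\prin)$ on $\chi$ translates faithfully to the corresponding condition on $\chi \circ \phi_\alpha$.

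Next I would check that $\phi_\alpha^{*}$ preserves the topological type of the associated bundle. Using the lifted path $\widetilde{\alpha}$, one constructs a canonical isomorphism
\[
\wMS \times_{\chi} \GG \;\cong\; \wMS \times_{\chi \circ \phi_\alpha} \GG
\]
of principal $\GG$-bundles over $\Sigma$; hence the label ${(\cdot)}_\prin$ singling out the connected component is preserved. Finally, the $\GG$-action by conjugation on Hom spaces commutes with precomposition, so $\phi_\alpha^{*}$ is $\GG$-equivariant and therefore restricts to a diffeomorphism on each $\GG$-orbit type stratum, giving the required isomorphism of stratified spaces.

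The main obstacle is the verification in the first step, namely, showing that $\phi_\alpha$ carries the distinguished central fiber class at $Q'_\MS$ to the fiber class at $Q_\MS$ without a sign, and so transports the $X_\prin$-constraint faithfully. Once this geometric identification is pinned down by means of the reference connection $\omega_{\wMS}$ on $\prin_{\wMS}$, the remaining verifications are formal and parallel the proofs of \cref{singleout33} and \cref{singleout44}.
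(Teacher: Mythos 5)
Your proof is correct and follows exactly the route the paper intends: the paper states Proposition~\ref{singleout55} without an explicit argument, leaving the reader to supply the standard change-of-base-point isomorphism $\phi_\alpha$ and the verification that precomposition preserves the central fiber class $[r]$ (hence the $X_\prin$-constraint), the topological type of the associated bundle, and the $\GG$-conjugation action. Your identification of the key point --- that $\phi_\alpha$ carries the oriented fiber generator at $Q'_{\MS}$ to that at $Q_{\MS}$, so the condition $\cchi([r])=\exp(X_\prin)$ transports faithfully --- is precisely the non-formal step, and your treatment of it via the reference connection is sound.
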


The three propositions imply the independence statement in \cref{singleout}
above as well as that in \cref{count} above.

\section{Stratified Symplectic Structure}
\label{strati}
Recall that, by definition, the algebra $C^{\infty}(\Hom(\GR,\GG)_\prin)$
of \emph{Whitney smooth functions} on $\Hom(\GR,\GG)_\prin$
(relative to the embedding into $\Hom(F,\GG)$)
is  the algebra of continuous functions
on $\Hom(\GR,\GG)_\prin$
that are restrictions of smooth functions
on the ambient space $\Hom(F,\GG) \cong \GG^{2\ell}$;
cf.~\cite{MR1501735},~\cite{MR0095844}. 
Let 
$C^{\infty}(\mathrm{Rep}(\GR,\GG)_\prin)=
C^{\infty}(\Hom(\GR,\GG)_\prin)^\GG$
 be the algebra of $\GG$-invariant functions in 
$C^{\infty}(\Hom(\GR,\GG)_\prin)$,
viewed as an algebra of continuous functions on the orbit space
$\mathrm{Rep}(\GR,\GG)_\prin$
in an obvious manner.
The algebras 
$C^{\infty}(\Hom(\GR,\GG)_\prin)$
and
$C^{\infty}(\mathrm{Rep}(\GR,\GG)_\prin)$
are what has come to be known as a {\em smooth structure\/}
on
$\Hom(\GR,\GG)_\prin$ and
$\mathrm{Rep}(\GR,\GG)_\prin$,
respectively.

Recall that a {\em differential space\/}
is a topological space $T$ together with a subalgebra $C$ of the algebra
of continuous functions on $T$ subject to certain conditions \cite{MR0467544}.
A {\em smooth structure\/}
on a topological space $T$ is an algebra of functions
that turns $T$ into a differential space,
subject to suitable additional conditions
depending on context.

As \( \mathcal B_{\prin, Q} \) is a Fr\'echet manifold,
it acquires a
natural algebra $C^{\infty}(\mathcal B_{\prin,Q})$ of smooth 
functions on $\mathcal B_{\prin,Q}$.
Furthermore, since the Wilson loop mapping 
\( \wilson\colon \mathcal B_{\prin, Q} \to \Hom(F, \GG) \), 
cf. \eqref{wilson2},
is smooth,
the image 
$\wilson^*(C^{\infty}(\Hom(F,G))) \subseteq C^{\infty}(\mathcal B_{\prin,Q})$
yields a smooth structure on $\mathcal B_{\prin,Q}$ as well,
but not every smooth function on \( B_{\prin, Q} \) arise as
the pull back, under the Wilson loop mapping,  of a smooth function 
on $\Hom(F,G)$.
When we consider the space $\mathcal {CYM}_{\prin,Q}$, the situation changes:

\begin{thm}
The  smooth structure on $\mathcal {CYM}_{\prin,Q}$ that arises via
restriction of functions in $C^{\infty}(\mathcal B_{\prin,Q})$
coincides with the smooth structure that arises via
restriction of functions in $\wilson^*(C^{\infty}(\Hom(F,\GG))) $.
In other words, the Wilson loop mapping \( \wilson^{\sharp}\colon 
\mathcal {CYM}_{\prin,Q} 
\to
\Hom_{X_\prin}(\Gamma,\GG)_\prin \),
cf.~\eqref{singleout2}, is an isomorphism of smooth  spaces.
\end{thm}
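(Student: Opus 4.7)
The plan is to prove the two inclusions of function algebras on $\mathcal{CYM}_{\prin,Q}$. The easy inclusion $\wilson^*(C^\infty(\Hom(F,\GG)))\vert_{\mathcal{CYM}_{\prin,Q}} \subseteq C^\infty(\mathcal{B}_{\prin,Q})\vert_{\mathcal{CYM}_{\prin,Q}}$ is immediate from \cref{adapted}, which guarantees that the Wilson loop mapping $\wilson$ descends to a smooth map $\wilson_{\flat} \colon \mathcal{B}_{\prin,Q} \to \Hom(F,\GG)$; hence $\wilson_{\flat}^{*}g \in C^\infty(\mathcal{B}_{\prin,Q})$ for every $g\in C^\infty(\Hom(F,\GG))$. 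The substance of the theorem lies in the reverse inclusion.

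For the reverse inclusion I would exploit the finite-dimensional slice analysis established in the proof of \cref{lem2}. Given $f \in C^\infty(\mathcal{B}_{\prin,Q})$ and a point $[\conn]\in \mathcal{CYM}_{\prin,Q}$, that analysis produces a finite-dimensional smooth $\Stab_{\conn}$-invariant submanifold $M_{\conn,Q}$ of $\mathcal{B}_{\prin,Q}$ whose $\GG$-saturation $\GG \times_{\Stab_{\conn}} M_{\conn,Q}$ forms a neighborhood of the orbit $\GG\cdot [\conn]$ in $\mathcal{CYM}_{\prin,Q}$, together with a diffeomorphism
\begin{equation*}
\wilson_{\natural}\colon \GG \times_{\Stab_{\conn}} M_{\conn,Q} \longrightarrow \GG \times_{\Stab_{\cchi_{\conn}}} M_{\wilson(\conn),Q}
\end{equation*}
onto an embedded finite-dimensional submanifold of $\Hom(F,\GG) \cong \GG^{2\ell}$. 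The restriction of $f$ to the embedded finite-dimensional submanifold $\GG \cdot M_{\conn,Q}$ of $\mathcal{B}_{\prin,Q}$ is smooth in the ordinary finite-dimensional sense; transferring via $\wilson_{\natural}$ yields a smooth function on $\GG \cdot M_{\wilson(\conn),Q}$, and a standard tubular neighborhood extension (or the Whitney extension theorem) promotes it to a smooth function $g_{[\conn]}$ on an open neighborhood $\mathcal{V}_{[\conn]}$ of $\GG \cdot M_{\wilson(\conn),Q}$ in $\Hom(F,\GG)$ whose pullback under $(\wilson^{\sharp})^{-1}$ agrees with $f$ on $\mathcal{V}_{[\conn]} \cap \Hom_{X_\prin}(\Gamma,\GG)_\prin$.

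To globalize I would invoke the compactness of $\Hom_{X_\prin}(\Gamma,\GG)_\prin$, noted in \cref{ubc}, to extract a finite subcover $\set{\mathcal{V}_{i}}_{i \geq 1}$ with associated extensions $g_{i}$; adjoin an open set $\mathcal{V}_{0} \subseteq \Hom(F,\GG)$ whose closure is disjoint from $\Hom_{X_\prin}(\Gamma,\GG)_\prin$, choose a smooth partition of unity $\set{\rho_{i}}_{i \geq 0}$ on $\Hom(F,\GG)$ subordinate to this cover, and set $g = \sum_{i \geq 1}\rho_{i}\, g_{i}$. Then $g \in C^\infty(\Hom(F,\GG))$, and on $\Hom_{X_\prin}(\Gamma,\GG)_\prin$ the identity $\sum_{i \geq 1}\rho_{i} = 1$ together with the local agreement $g_{i} \circ \wilson^{\sharp} = f$ forces $(\wilson^{\sharp})^{*}g = f\vert_{\mathcal{CYM}_{\prin,Q}}$. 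The main obstacle I anticipate is the fourth step above, namely verifying that the local model $\GG \cdot M_{\wilson(\conn),Q}$ is genuinely an embedded (not merely immersed) finite-dimensional submanifold of $\Hom(F,\GG)$ on which a tubular neighborhood exists; this is already implicit in the diffeomorphism claim of \cref{lem2}, but one must shrink each $\mathcal{V}_{i}$ to avoid self-intersections caused by the residual global $\GG$-action before carrying out the extension.
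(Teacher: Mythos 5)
Your proposal is correct and follows the same route as the paper: the paper's entire proof is the single sentence that the statement is a consequence of the Fr\'echet slice analysis in the proof of \cref{lem2}, and your argument --- local transfer of $f$ through the slice diffeomorphism $\wilson_\natural$, extension off the finite-dimensional local model in $\Hom(F,\GG)$, and globalization by a partition of unity using the compactness of $\Hom_{X_\prin}(\Gamma,\GG)_\prin$ --- is precisely the unpacking of that sentence. The two technical points you flag (shrinking each $\mathcal V_i$ so that $\mathcal V_i\cap\Hom_{X_\prin}(\Gamma,\GG)_\prin$ lies inside the image of the corresponding local model, and embeddedness of the saturated slice needed for the tubular-neighbourhood extension) are indeed the only places where care is required, and both are supplied by the slice analysis of \cref{lem2}.
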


\begin{proof} This is a consequence of the Fr\'echet slice analysis
in the proof of \cref{lem2}. 
\end{proof}

Accordingly, we take the  smooth structure
$C^{\infty}(\mathcal {CYM}_{\prin,Q})$
on $\mathcal {CYM}_{\prin,Q}$
to be the algebra of continuous functions on $\mathcal {CYM}_{\prin,Q}$
that arise from ordinary smooth functions
on $\Hom(F,\GG)$ via pull back under the Wilson loop mapping.
Moreover, we take the smooth structure
$C^{\infty}(\mathcal {CYM}_{\prin})$ on
$\mathcal {CYM}_{\prin}$ to be the algebra
\[
C^{\infty}(\mathcal {CYM}_{\prin})=
 C^{\infty}(\mathcal {CYM}_{\prin,Q})^\GG
\]
of $\GG$-invariant functions,
viewed as an algebra of continuous functions on 
$\mathcal {CYM}_{\prin}=\GG \backslash\mathcal {CYM}_{\prin,Q}$
in the obvious way. 
By construction, the homeomorphism \( \wilson_\sharp \), 
cf.~\eqref{singleout2},
induces isomorphisms
\begin{align}
C^{\infty}(\mathcal {CYM}_{\prin,Q})
&\longrightarrow
C^{\infty}(\Hom(\GR,\GG)_\prin)
\\
C^{\infty}(\mathcal {CYM}_{\prin})
&\longrightarrow
C^{\infty}(\mathrm{Rep}(\GR,\GG)_\prin)
\end{align}
of real algebras.

The {\em extended moduli space\/} construction developed in
\cite{MR1370113}, 
\cite{MR1470732},
\cite{MR1277051},
see
\cite{MR1938554}
for a leisurely introduction and more references,
yields the stratified symplectic structure 
\[
(C^{\infty}(\mathrm{Rep}(\GR,\GG)_\prin),\pois)
\]
on
$\mathrm{Rep}(\GR,\GG)_\prin$ determined by the data.
On each stratum of
$\mathrm{Rep}_{\prin}(\Gamma,\GG)$,
that stratified symplectic structure comes down to 
a symplectic structure. 
On the other hand,
the symplectic structure that was
constructed
in~\cite{atibottw} from the familiar expression
\[
\omega(\alpha,\beta)=\int_{\Sigma}\alpha \wedge \beta,
\ \alpha,\beta \in \form^1(\Sigma,\adx)=\mathrm T_{\conn}(\Conn),
\ A \in \Conn,
\]
by infinite dimensional symplectic reduction
involving Sobolev space techniques
induces a 
symplectic structure on the top stratum of
\( \mathcal {CYM}_{\prin} \);
here 
the exterior product $\alpha \wedge \beta$ of the two
$\adx$-valued $1$-forms
$\alpha$ and $\beta$ on $\Sigma$ is evaluated in terms of the
inner product on $\gg$.
With respect to these symplectic structures,
the Wilson loop mapping induces a symplectomorphism from 
the top stratum of $\mathrm{Rep}_{\prin}(\Gamma,\GG)$
onto the top stratum of \( \mathcal {CYM}_{\prin} \).
Suitably interpreted,
the Wilson loop mapping also induces a symplectomorphism from 
each stratum of $\mathrm{Rep}_{\prin}(\Gamma,\GG)$
onto the corresponding stratum of \( \mathcal {CYM}_{\prin} \).

\begin{rema}\label{further}
We can even push a bit further: 
In \cite{MR1638045},
the authors reworked the extended moduli space construction 
and introduced the terminology {\em quasi-hamiltonian $\GG$-space\/}.
Via the Wilson loop mapping \eqref{wilson2},
the \( 2 \)-form on $\Hom(F,\GG)$ constructed in
\cite{MR1277051},
\cite{MR1370113}, 
\cite{MR1470732},
pulls back to a \( 2 \)-form 
on $\mathcal B_{\prin,Q}$ but,
beware, this \( 2 \)-form is not closed.
We can then view the composite
\[
r \circ 
\wilsonn \colon \mathcal B_{\prin,Q} \longrightarrow \GG
\]
as a Lie group valued momentum mapping with respect to the
pulled back 2-form on  $\mathcal B_{\prin,Q}$.
Then $\mathcal B_{\prin,Q}$
together with the $\GG$-action on $\mathcal B_{\prin,Q}$
and the map $r \circ \wilsonn$ is a quasi-hamiltonian $\GG$-space
except that on  $\mathcal B_{\prin,Q}$ 
(as opposed to $\Hom(F,\GG)$) 
the non-degeneracy condition is to be interpreted appropriately.
Reduction with respect to the structure group $\GG$ then yields 
the moduli space of central Yang-Mills connections
as a stratified symplectic space.
Suitably interpreted, we can view  $\mathcal B_{\prin,Q}$
as the reduced space relative to the group of based gauge transformations.
This is, perhaps, not strictly correct -- we did not make
precise the momentum mapping for the  group of based gauge 
transformations -- and  
the purported space that arises by reduction 
relative to the group of based gauge transformations
would have to be a proper subspace
of  $\mathcal B_{\prin,Q}$.
However, to use some physics terminology,
we are interested only in what happens {\em on shell\/}, that is,
on the subspace $\mathcal {CYM}_{\prin,Q}$,
and we can interpret \cref{singleout} by saying that,
on shell, the Fr\'echet manifold $\mathcal B_{\prin,Q}$
has the appropriate behavior as if it did arise by reduction 
relative to the group of based gauge transformations.
\end{rema}

\section{Fr\'echet Slices}
\label{frechetslices}
Our references for terminology
and notation in the framework of infinite-dimensional
differential geometry
are \cite{MR656198} for the Fr\'echet case
and  \cite{Neeb2006}
for the
locally convex setting.
Thus we  freely use the terms
{\em smooth manifold\/},
{\em Lie group\/},
{\em Fr\'echet structure\/} etc.
without further explanation.

\subsection{Principal Lie subgroups}
In finite dimensions, for any Lie subgroup \( H \subseteq G \), 
the space of left cosets \( G \slash H \) carries 
the structure of a
smooth manifold that turns the natural projection 
\( \pi\colon G \to G \slash H  \) into a right principal \( H \)-bundle. 
Recall that the local diffeomorphism 
\begin{equation}
  \mu\colon \liea{g} 
=  \liea{k} \oplus \liea{h} \to G, \qquad (X,Y) \mapsto \exp(X) \exp(Y),
\ X \in \liea{k},\ Y \in \liea{h},
\end{equation}
plays an important role in the construction of charts on \( G \slash H \). This 
concept needs some refinement for general 
locally convex manifolds since in general
\( \mu \) fails to be a local diffeomorphism 
(and the exponential map  need not 
exist in the first place).

\begin{prop}[{\cite[Proposition 7.1.21]{Gloecknernneeb2013}}] \label{prop::subliegroup:principalLieSubgroup}
  Let \( G \) be a 
Lie group and \( H \subseteq G \) a subgroup. Given a Lie group structure on \( H \), the statements {\rm(1)} and {\rm(2)} below  are equivalent:
 
{\rm(1)}     The group \( H \) is a splitting Lie subgroup, and there exists a unique 
smooth structure on the space of left cosets \( G \slash H \) such that the canonical 
projection \( \pi\colon G \to G \slash H \) defines a smooth principal \( H \)-bundle 
structure. In this case, a map \( f\colon G \slash H  \to \GGGG \) to some 
manifold \( \GGGG \) 
is smooth if and only if the induced map 
\( \hat f = f \circ \pi\colon G \to \GGGG \) is smooth.
    
{\rm(2)} The inclusion \( \iota\colon H \to G \) is a morphism of Lie groups, and 
there exist an open subset \( V \) containing \( 0 \) in some locally convex 
space 
\( \liea{k} \) and a smooth map \( \sigma\colon V \to G \) 
with \( \sigma(0)=e \) such that
      \begin{equation}
        \mu\colon V \times H \to G, \qquad (X, y) \mapsto \sigma(X) y,
\ 
X \in V,\ y \in H,
      \end{equation} 
      is a diffeomorphism onto an open subset of \( G \). In this case, 
\( \mu( V \times H) \) is a tube around \( H \) in \( G \). 

\end{prop}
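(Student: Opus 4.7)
The plan is to prove the equivalence directly by constructing the relevant data in each direction, and then to observe that every step respects the tame Fr\'echet category.

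For \textrm{(1)} $\Rightarrow$ \textrm{(2)}: suppose $\pi\colon G\to G/H$ is a smooth principal $H$-bundle. Pick a chart $\varphi\colon V\to G/H$ at $[e]$, with $V$ open around $0$ in some locally convex space $\liea k$ and $\varphi(0)=[e]$, and choose a smooth local section $\sigma\colon V\to G$ of $\pi$ with $\sigma(0)=e$, whose existence is guaranteed by the local triviality of the bundle. Define $\mu(X,y)=\sigma(X)\,y$ for $(X,y)\in V\times H$. Smoothness of $\mu$ is immediate from smoothness of $\sigma$ and of multiplication in $G$. The inverse is given explicitly by
\begin{equation*}
\mu^{-1}(g)=\bigl(\varphi^{-1}(\pi(g)),\,\sigma(\varphi^{-1}(\pi(g)))^{-1}g\bigr),
\end{equation*}
which is well defined and smooth on the open set $\pi^{-1}(\varphi(V))$ and takes values in $V\times H$ by construction.

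For \textrm{(2)} $\Rightarrow$ \textrm{(1)}: assume the tube $\mu$. The composite $\pi\circ\sigma\colon V\to G/H$ is injective onto an open set of the quotient (by the tube property) and is taken to be the inverse of a chart on $G/H$ at $[e]$. Translating by $g\in G$ produces an atlas; the transition maps are compositions of $\mu$, $\mu^{-1}$, left translations and projections, hence smooth. One then verifies that this atlas endows $G/H$ with a smooth manifold structure, that $\pi$ becomes a smooth surjective submersion with $\mu^{-1}$ serving as the local trivialization, and that the cocycle condition is satisfied so that a principal $H$-bundle structure results. Uniqueness of the smooth structure follows from the universal property: any chart compatible with $\pi$ being smooth is equivalent to one of the above, and the criterion for smoothness of $f\colon G/H\to \GGGG$ reduces, via the local sections $\sigma$, to smoothness of $f\circ\pi$.

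The hard part, and the reason one must invoke \cite{GloecknerNeeb2013} rather than a finite-dimensional textbook, is twofold: first, that in the absence of a well-behaved exponential map one cannot produce $\sigma$ from $\mu\colon\liea k\oplus\liea h\to G$, so the tube hypothesis must be added as data rather than derived; second, that verifying smoothness of transition maps in the \textrm{(2)} $\Rightarrow$ \textrm{(1)} direction requires the Michal--Bastiani calculus and the fact that left translation on $G$ is smooth. For the tame Fr\'echet refinement, one observes that every map appearing in the two constructions---$\sigma$, $\mu$, $\mu^{-1}$, left translation, multiplication, inversion on $H$, and the chart $\varphi$---is tame smooth by hypothesis, and that the explicit formula for $\mu^{-1}$ above involves no inversion of differential operators. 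Consequently no appeal to the Nash--Moser theorem (\cref{prop::locallyConvexSpace:NashMoserInverseTheorem}) is required to lift the equivalence to the tame category; the same proof works verbatim with every occurrence of \emph{smooth} replaced by \emph{tame smooth}.
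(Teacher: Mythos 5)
The paper offers no proof of this proposition at all: it is quoted verbatim from Gl\"ockner--Neeb (their Proposition~7.1.21) and used as a black box, so there is no in-paper argument to measure yours against. Your sketch is the standard proof of the quoted result and is essentially sound. In the direction (1)~$\Rightarrow$~(2), one point deserves to be explicit: the second component of your formula for $\mu^{-1}$ must be smooth as a map into $H$ \emph{with its given Lie group structure}, not merely into $G$, and this is precisely what the $H$-equivariant local trivialization $\pi^{-1}(U)\cong U\times H$ of the principal bundle supplies; the bare formula $g\mapsto \sigma(\varphi^{-1}(\pi(g)))^{-1}g$ only visibly lands smoothly in $G$. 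In the direction (2)~$\Rightarrow$~(1), two items buried in your ``one then verifies'' matter: (a) the tube exhibits $H$ as locally closed, hence closed, in $G$, which is what makes the quotient topology on $G/H$ Hausdorff, and one must check injectivity of $\pi\circ\sigma$ from injectivity of $\mu$; and (b) statement (1) also asserts that $H$ is a \emph{splitting Lie subgroup}, which follows because the translates of $\mu$ furnish adapted submanifold charts for $H$ at every one of its points --- this should be said rather than left implicit. Your closing observation is the structurally important one and is correct: because the tube is hypothesis rather than something to be manufactured, the equivalence is pure bookkeeping in either category and requires no inverse function theorem; Nash--Moser enters the paper only in \cref{prop::lieGroup:finiteDimSubgroupIsStrongSplittingLieSubgroup}, where the tube has to be produced from the exponential map.
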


Direct inspection of the proof of that
proposition in \cite{Gloecknernneeb2013} shows 
the following.

\begin{comp}
The equivalence of {\rm(1)} and {\rm(2)} in \cref{prop::subliegroup:principalLieSubgroup}
is 
in particular valid
in the category of  tame smooth Fr\'echet manifolds
and tame smooth maps.
\end{comp}

Given a Lie group $G$ and a subgroup $H$,
we refer to \( H \) as a 
{\em principal Lie subgroup\/}
when  one (and hence both) of the conditions 
in \cref{prop::subliegroup:principalLieSubgroup} are met.
In~\cite{Gloecknernneeb2013} such a subgroup $H$ is termed a
{\em split Lie subgroup\/} 
but, as the requirement of being a splitting 
submanifold is not enough to ensure the properties
in the above proposition 
we prefer the terminology 
principal Lie subgroup.
When the Lie group $G$ admits an exponential map, 
a local product structure around 
the identity already suffices since the local product structure 
can then be extended to the whole subgroup.

The subsequent proposition is a consequence of
\cite[Proposition 7.1.24]{Gloecknernneeb2013}.
Since the final version of this book is not yet available, 
for ease of exposition,
we spell out a complete statement.

\begin{prop} 
\label{prop::lcliegroup:principalLieSubgroupByExpLocalDiffeomorphism}
  Let \( G \) be a Lie group with smooth exponential map and 
\( H \subseteq G \) a splitting Lie subgroup. Denote a complement of 
\( \liea{h} \) in \( \liea{g} \) by \( \liea{k} \), and 
let \( V \subseteq \liea{k} \) be an open neighborhood of $0$ in 
\( \liea{k} \). If the 
map 
  \begin{equation}
    \mu\colon V \times H \to G, \qquad (X, y) 
\mapsto \exp(X) y,\ X\in V,\ y \in H,
  \end{equation} 
  is a local diffeomorphism at \( (0, e) \), then \( H \) is a 
principal Lie subgroup.
\end{prop}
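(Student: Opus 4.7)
The plan is to verify condition~(2) of \cref{prop::subliegroup:principalLieSubgroup} with the natural section $\sigma = \exp_G|_V$. Thus it suffices to show that, after possibly shrinking $V$, the map
\[
\mu\colon V\times H \to G,\qquad (X,y)\mapsto \exp(X)\,y,
\]
is a diffeomorphism onto an open subset of $G$ (a tube around $H$). The hypothesis provides open neighbourhoods $V_0\subseteq V$ of $0$ in $\liea{k}$ and $U_0\subseteq H$ of $e$ such that $\mu|_{V_0\times U_0}$ is a diffeomorphism onto an open set $W\subseteq G$.

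First, I would globalise the local-diffeomorphism property along~$H$. The right-equivariance identity $\mu(X,y y_0)=\mu(X,y)\,y_0$, combined with the fact that right translation $R_{y_0}\colon G\to G$ is a diffeomorphism, yields that $\mu|_{V_0\times U_0 y_0}$ is a diffeomorphism onto $W y_0$ for every $y_0\in H$. Hence $\mu$ is a local diffeomorphism at every point of $V_0\times H$, and its image $\bigcup_{y_0\in H} W y_0$ is open in~$G$.

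Next, I would establish global injectivity on $V_2\times H$ for a sufficiently small open $V_2\subseteq V_0$. Using continuity of $\exp_G$ and of multiplication, choose $V_2$ so that $\exp(-X_2)\exp(X_1)\in W$ whenever $X_1,X_2\in V_2$. Suppose $\mu(X_1,y_1)=\mu(X_2,y_2)$ with $X_i\in V_2$; then
\[
h \defeq \exp(-X_2)\exp(X_1)=y_2 y_1^{-1}\in H\cap W.
\]
The chart $\mu^{-1}\colon W\to V_0\times U_0$ furnishes a continuous $\liea{k}$-component $\pi_{\liea{k}}\colon W\to V_0$ that vanishes on $\iota(U_0)$, because $\mu(0,y)=\iota(y)$. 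The key claim, which I expect to be the hard part, is that $\pi_{\liea{k}}$ vanishes on the whole of $H\cap W$, equivalently, that after shrinking, $H\cap W=\iota(U_0)$. Granting this, $h\in U_0$ and $h=\mu(0,h)$; comparing with $\exp(X_1)=\mu(X_1,e)$ and $\exp(X_2)h=\mu(X_2,h)$, injectivity of $\mu|_{V_0\times U_0}$ forces $X_1=X_2$ and $h=e$, so $y_1=y_2$.

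Combining the two steps, $\mu\colon V_2\times H\to G$ is a bijective local diffeomorphism onto an open subset of~$G$, hence a diffeomorphism; \cref{prop::subliegroup:principalLieSubgroup} then identifies $H$ as a principal Lie subgroup. Concerning the hard step: the hypothesis that $\mu$ is a local diffeomorphism at $(0,e)$ is equivalent, at the infinitesimal level, to the topological splitting $\liea{g}=\liea{k}\oplus\liea{h}$, and it simultaneously implies that $\iota\colon H\to G$ restricts to a topological embedding near~$e$ (because $\mu|_{\{0\}\times U_0}=\iota|_{U_0}$ is a homeomorphism onto its image with the subspace topology from~$W$). In the locally convex setting neither of these is automatic; they fail, for example, when $H$ is a dense one-parameter subgroup of a torus. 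Once the embedding near $e$ is in hand, $\iota(U_0)$ is open in $H$ relative to the subspace topology, so one may shrink $W$ (and correspondingly $V_0$, $U_0$) to arrange $H\cap W=\iota(U_0)$, completing the argument.
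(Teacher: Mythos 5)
Your skeleton is the same as the paper's: globalise the local diffeomorphism along \(H\) by right translation, then reduce global injectivity of \(\mu\) to showing that \(h=\exp(-X_2)\exp(X_1)=y_2y_1^{-1}\), which lies in \(H\) and (after shrinking) in a prescribed neighbourhood \(W\) of \(e\) in \(G\), must actually lie in the small neighbourhood \(U_0\) of \(e\) in \(H\) on which \(\mu\) is already known to be injective. The gap is in how you justify that last containment. You try to deduce it from the local-diffeomorphism hypothesis alone: from the fact that \(\mu|_{\{0\}\times U_0}=\iota|_{U_0}\) is a topological embedding you infer that ``\(\iota(U_0)\) is open in \(H\) relative to the subspace topology'', whence \(H\cap W=\iota(U_0)\) after shrinking \(W\). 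That implication is false. For the dense one-parameter subgroup \(H\cong\R\) of the torus \(G=T^2\), the map \(\mu\) \emph{is} a local diffeomorphism at \((0,e)\) and \(\iota\) \emph{is} a topological embedding on a short interval \(U_0\); nevertheless \(\iota(U_0)\) is not relatively open in \(\iota(H)\), no neighbourhood \(W\) of \(e\) in \(G\) satisfies \(H\cap W=\iota(U_0)\), and \(H\) is not a principal Lie subgroup. So the local-diffeomorphism hypothesis by itself cannot yield the conclusion, and your own diagnosis of where the irrational winding breaks the argument is misplaced: the infinitesimal splitting and the local embedding both hold there; it is exactly the relative openness you assert that fails.

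What rescues the step is the hypothesis you never invoke: \(H\) is a \emph{splitting Lie subgroup}, i.e.\ a splitting submanifold of \(G\), and hence carries the subspace topology. Then \(U_0\), being open in \(H\), is of the form \(H\cap W_G\) for some open \(W_G\subseteq G\); one shrinks \(V\) so that \(\exp(-V)\exp(V)\subseteq W_G\), after which \(h\in H\cap W_G=U_0\) and the injectivity of \(\mu\) on \(V\times U_0\) finishes the proof exactly as you indicate. This is precisely how the paper argues. With that correction your proof is complete, and the detour through the continuous \(\liea{k}\)-component \(\pi_{\liea{k}}\) becomes unnecessary.
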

\begin{proof}
  Since \( \mu \) is a local diffeomorphism at \( (0, e) \) there exists a 
neighborhood \( U_H \subseteq H \) 
of $e$ in $H$ 
such that the restriction \( \mu_{\restriction V \times U_H} \) is a 
diffeomorphism onto an open neighborhood \( U_G \subseteq G \) 
of $e$ in $G$
(after shrinking \( V \) if need be). Because of the identity 
\[
\mu(X, q y) = \mu(X, q) y,\ X\in V,\ q,y \in H,
\]
the map \( \mu \) is a local diffeomorphism at every point 
of  $V \times H$. 

Thus, to apply \cref{prop::subliegroup:principalLieSubgroup}, 
it suffices to show that \( \mu \) is injective. 
Since \( U_H \) is open in \( H \), there exists an open 
neighborhood $W_G$ of $e$  in $G$  such that \( U_H = H \cap W_G \). 
By  shrinking \( V \) further, we can 
arrange for the set \( \exp(-V) \exp(V) \) to lie completely in \( W_G \). 
Now let \( (X,a) \) and \( (Y,b) \) be two points of $V\times H$ 
having the same image 
under \( \mu \). Then 
  \begin{equation}
    \exp(-Y) \exp(X) = b a^{-1}.
  \end{equation}
  The left-hand side lies in \( W_G \) by assumption and the 
right-hand side is an element of \( H \)
whence both expressions are contained in \( U_H \). On the other hand, 
\( \mu \) is bijective on \( V \times U_H \), 
and hence the calculation 
\[ 
\mu(Y, b a^{-1}) = \mu(Y,b) a^{-1} = \mu(X,a) a^{-1} = \mu(X,e)  
\]
establishes the desired result \( X=Y \) and \( a=b \). 
\end{proof}

Since the derivative 
\( \mu'_{0,e}\colon \liea{k} \times \liea{h} \to \liea{g} \) 
of $\mu$ at the point $(0,e)$ of $V \times H$
is just the direct sum isomorphism 
\( \liea{k} \oplus \liea{h} \to \liea{g} \), the inverse function theorem 
yields the necessary local diffeomorphism 
so that \cref{prop::lcliegroup:principalLieSubgroupByExpLocalDiffeomorphism} 
applies
to Banach Lie groups; 
thus all splitting Lie subgroups of a Banach Lie group are principal.

Exploiting the Nash--Moser theorem, 
we obtain the following important 
consequence 
for tame Fr\'echet Lie groups. 

\begin{thm} 
\label{prop::lieGroup:finiteDimSubgroupIsStrongSplittingLieSubgroup} 
  Let \( G \) be a tame Fr\'echet Lie group with tame smooth exponential map 
which is a local diffeomorphism close to  \( 0 \). Every splitting Lie 
subgroup \( H \subseteq G \) of finite dimension or finite codimension is a 
principal Lie subgroup. 
\end{thm}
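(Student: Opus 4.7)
The plan is to invoke \cref{prop::lcliegroup:principalLieSubgroupByExpLocalDiffeomorphism} and reduce the claim to showing that the map
\[ \mu\colon V \times H \to G, \qquad (X, y) \mapsto \exp(X)\, y, \]
is a local diffeomorphism at $(0, e)$, where $V$ is an open neighborhood of $0$ in some closed complement $\mathfrak{k}$ of $\mathfrak{h}$ in $\mathfrak{g}$ (the existence of $\mathfrak{k}$ is guaranteed by the splitting hypothesis on $H$). The derivative $d\mu_{(0, e)}$ is the direct sum isomorphism $\mathfrak{k} \oplus \mathfrak{h} \to \mathfrak{g}$. To upgrade this pointwise isomorphism to a local diffeomorphism in the tame Fr\'echet category, I would invoke the Nash--Moser inverse function theorem (\cref{prop::locallyConvexSpace:NashMoserInverseTheorem}); the hypothesis to verify is that $d\mu$ admits a tame smooth family of right inverses on a whole neighborhood of $(0, e)$.

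Pulling $d\mu_{(X, y)}$ back to the identity by the tame smooth left translation by $\mu(X, y)^{-1}$, and absorbing the residual $y$-dependence into the tame automorphism $\mathrm{Ad}_y$ of $\mathfrak{g}$ (which preserves $\mathfrak{h}$ since $y \in H$), the task reduces to constructing a tame smooth family of inverses for the linear operator
\[ T_X\colon \mathfrak{k} \oplus \mathfrak{h} \to \mathfrak{g}, \qquad (X', Y') \mapsto A_X X' + Y', \qquad A_X \defeq \frac{\mathrm{id} - e^{-\mathrm{ad}_X}}{\mathrm{ad}_X}, \]
parameterized by $X$ in a neighborhood of $0$ in $\mathfrak{k}$. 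Here $A_X$ is the standard formula for $d\exp_X$ up to left translation. Since $\exp$ is tame smooth and a local diffeomorphism close to $0$, its tame local inverse furnishes a tame smooth family $\{A_X^{-1}\}$ of inverses on $\mathfrak{g}$ for $X$ small. Given $Z \in \mathfrak{g}$, solving $T_X(X', Y') = Z$ by substituting $X' = A_X^{-1}(Z - Y')$ and enforcing $X' \in \mathfrak{k}$ yields the scalar-direction equation
\[ (P_\mathfrak{h}\, A_X^{-1})\big|_{\mathfrak{h}}\, Y' \;=\; P_\mathfrak{h}\, A_X^{-1}\, Z \qquad \text{on } \mathfrak{h}; \]
symmetrically, substituting $Y' = Z - A_X X'$ and enforcing $Y' \in \mathfrak{h}$ yields $(P_\mathfrak{k}\, A_X)|_{\mathfrak{k}}\, X' = P_\mathfrak{k}\, Z$ on $\mathfrak{k}$.

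The hypothesis on $H$ enters precisely at this last step. In the finite-codimension case, $\mathfrak{k}$ is finite-dimensional and the endomorphism $(P_\mathfrak{k}\, A_X)|_{\mathfrak{k}}$ specialises to $I_\mathfrak{k}$ at $X = 0$; continuity of the determinant makes it invertible for $X$ small, with inverse given smoothly by Cramer's rule, and tameness is automatic on a finite-dimensional space. In the finite-dimension case, one argues symmetrically on the finite-dimensional space $\mathfrak{h}$ with $(P_\mathfrak{h}\, A_X^{-1})|_{\mathfrak{h}}$, which likewise equals $I_\mathfrak{h}$ at $X = 0$. In either case, assembling these finite-dimensional inverses together with $A_X$ and $A_X^{-1}$ produces the required tame smooth family $\{T_X^{-1}\}$, and Nash--Moser then delivers the local diffeomorphism property of $\mu$; \cref{prop::lcliegroup:principalLieSubgroupByExpLocalDiffeomorphism} concludes.

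The hard part is the rigorous verification of tame smoothness of $\{A_X^{-1}\}$ in $X$ near $0$, which rests on the standing assumption that $\exp$ is a local diffeomorphism close to $0$ combined with tame smoothness of the Nash--Moser local inverse of $\exp$ (whose derivative at $\exp(X)$ is exactly $A_X^{-1}$ up to a tame smooth translation); one also needs tame smoothness of $\mathrm{Ad}$ and $\mathrm{ad}$, both of which are automatic in a tame Fr\'echet Lie group. The finite-dimensional step itself is by contrast elementary, since any continuous operation landing in a finite-dimensional space is trivially tame.
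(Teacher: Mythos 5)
Your proof is correct and follows essentially the same route as the paper's: reduce via \cref{prop::lcliegroup:principalLieSubgroupByExpLocalDiffeomorphism} to showing that \( \mu \) is a local diffeomorphism at \( (0,e) \), and verify the Nash--Moser hypothesis by producing a tame smooth family of inverses for the derivative, exploiting the finite (co)dimension of one summand. The one point of divergence is that where you invert \( T_X \) by hand --- projecting onto the finite-dimensional factor, inverting there by continuity of the determinant and Cramer's rule, and back-substituting --- the paper instead puts the derivative into the normal form \( (u,v)\mapsto \varphi_a(u)+v \) (by composing \( \mu \) with \( \exp^{-1} \) resp.\ \( \LeftTrans_{\exp(-X)} \)) and cites \cref{prop::tameFrechet:extendFiniteDimSplitting}, which is precisely the statement that such a family with \( E \) finite-dimensional and \( \Phi_{a_0} \) an isomorphism admits a tame smooth family of inverses; your computation amounts to a self-contained proof of the special case of that lemma needed here. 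One cosmetic caveat: the closed-form expression \( (\mathrm{id}-e^{-\adAction_X})/\adAction_X \) for \( d\exp_X \) need not be available (as a convergent series, or at all) in an arbitrary tame Fr\'echet Lie group satisfying only the stated hypotheses, but your argument never actually uses it --- it only uses that \( A_X \defeq (dL_{\exp X})^{-1}\circ d\exp_X \) is a tame smooth family with \( A_0=\id \) admitting tame smooth inverses for small \( X \), which does follow from the standing assumption that \( \exp \) is a tame local diffeomorphism near \( 0 \); it would be cleaner to take that as the definition of \( A_X \) and drop the series.
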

\begin{proof}
  Denote the complement of 
\( \liea{h} \) in \( \liea{g} \) by \( \liea{k} \), 
and let \( V \subseteq \liea{k} \) be an open neighborhood
of $0$. In view of \cref{prop::lcliegroup:principalLieSubgroupByExpLocalDiffeomorphism} 
above, we have to show that
  \begin{equation}
    \mu\colon V \times H \to G, \qquad (X, y) \mapsto \exp(X) y,
\ X \in V, y \in H,
  \end{equation} 
  is a local diffeomorphism at \( (0, e) \).

The derivative of \( \mu \) at the point 
\( (X,e) \) of $V \times H$ is given by:
  \begin{equation}
    \mu'_{(X,e)}\colon \liea{k} \times \liea{h} \to \TBundle_{\exp(X)} G, 
\quad (Y, Z) \mapsto \exp'_X Y + (\LeftTrans_{\exp(X)})'_e Z,
\ Y \in \liea{k},\ Z\in \liea{h}.
  \end{equation}
  In the case of a finite-dimensional subgroup \( H \), apply 
\cref{prop::tameFrechet:extendFiniteDimSplitting} below
to 
  \begin{equation}
    (\exp^{-1} \circ \mu)'_{(X,e)}\colon \liea{k} \times \liea{h} 
\to \liea{g}, \quad (Y, Z) \mapsto 
Y + (\exp^{-1} \circ \LeftTrans_{\exp(X)})'_e Z,
\ Y \in \liea{k},\ Z\in \liea{h},
  \end{equation}
and conclude that 
\( \mu'_{(Y,e)} \) is invertible with tame inverse for every 
\( Y \in V \) near \( X \). By \( H \)-translation the same statement holds 
true at every point \( (Y, h) \) near \( (X, e) \). 
By the version of the
Nash--Moser inverse function 
given in~\cite[Theorem~3.4.2]{diez2013}, 
we conclude that \( \mu \) is a local diffeomorphism at \( (0, e) \).

A similar reasoning, 
applied to the map
  \begin{equation}
    (\LeftTrans_{\exp(-X)} \circ \mu)'_{(X,e)}\colon 
\liea{k} \times \liea{h} \to \liea{g}, 
\qquad (Y, Z) \mapsto (\LeftTrans_{\exp(-X)} \circ \exp)'_X Y + Z,
\ Y \in \liea{k},\ Z\in \liea{h}, 
  \end{equation}
establishes the case of finite codimension.
\end{proof}

For intelligibility, we now recall the tame Fr\'echet setting and
the Nash--Moser inverse function theorem,
cf.~\cite{MR656198, diez2013}.
A Fr\'echet space is called graded if it carries a distinguished increasing fundamental system of seminorms \( \normDot_k \).
If the growth of the sequence \( k \mapsto \normDot_k \) is controlled by the exponential map, then the graded Fr\'echet space is called \emph{tame} (see \cite[Definition~II.1.3.2]{MR656198} for the exact statement).
A smooth map \( f \) between graded Fr\'echet spaces is \emph{\( r \)-tame smooth} if \( f \) and all its derivatives satisfy a local estimate of the form
\begin{equation}
	\norm{f(x)}_k \lesssim 1 + \norm{x}_{k+r}.
\end{equation}
Roughly speaking, this means that \( f \) has a \lq maximal loss of \( r \) derivatives\rq.

\begin{thm}[{\cite[section III.1]{MR656198}}] \label{prop::locallyConvexSpace:NashMoserInverseTheorem} 
	Let $X$ and $Y$ be tame Fr\'echet 
spaces,
let $U$ be an open subset of $X$, and
let $f: U \to Y$ be tame smooth. 
	Suppose that the derivative $f'$ has a tame smooth family 
$\Psi^f$ of inverses, that is, $\Psi^f: U \times Y \to X$ is a 
tame smooth map, and the family $\Psi^f_x: Y \to X$ is inverse to 
$f'_x$ for every $x \in U$. Then the map $f$ is 
locally bijective and the inverse is a tame smooth map.
\end{thm}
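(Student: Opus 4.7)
The statement is Hamilton's Nash--Moser inverse function theorem, so the plan is essentially to follow his modified Newton iteration argument. The fundamental obstruction is the loss of derivatives: in the tame Fr\'echet setting the family $\Psi^f$ is only tame smooth, meaning that for each seminorm $\|\cdot\|_n$ one has estimates of the form $\|\Psi^f_x h\|_n \leq C(\|h\|_{n+r} + \|x\|_{n+r}\|h\|_r)$ with a fixed positive loss $r$. Naive Newton iteration $x_{k+1} = x_k + \Psi^f_{x_k}(y - f(x_k))$ therefore loses $r$ derivatives per step and cannot converge in any seminorm.

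The standard remedy is to pick a family of smoothing operators $(S_\theta)_{\theta \geq 1}$ on the tame Fr\'echet space $X$ which interpolate between the seminorms: $S_\theta$ maps $X$ to its smooth elements and satisfies the dual tame estimates $\|S_\theta x\|_n \leq C\,\theta^{n-m}\|x\|_m$ for $n \geq m$ and $\|(I - S_\theta) x\|_m \leq C\,\theta^{m-n}\|x\|_n$ for $n \geq m$. One then iterates
\begin{equation*}
x_{k+1} = x_k + S_{\theta_k}\,\Psi^f_{x_k}\bigl(y - f(x_k)\bigr),
\end{equation*}
with $\theta_k = \theta_0 (1+\delta)^k$ for appropriate constants. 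The point is that $S_{\theta_k}$ buys back the lost $r$ derivatives at the cost of a polynomial factor in $\theta_k$, while $(I - S_{\theta_k})$ absorbs the Taylor remainder of $f$, which is quadratic in the residual.

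The heart of the argument, and the main obstacle, is the convergence analysis. One sets up a simultaneous induction, tracking the residual $\|y - f(x_k)\|_a$ in a moderate seminorm (which should decay super-geometrically) and the iterates $\|x_k - x_0\|_b$ in a higher seminorm (which may only grow polynomially in $\theta_k$); one then propagates these bounds using the tame estimates for $f$, for $f'$ applied to the Taylor remainder, and for $\Psi^f$, combined with the smoothing estimates. The indices $a,b$ and the exponents must be balanced against $r$ and $\delta$ very carefully; this bookkeeping is the delicate part. Interpolation between the decaying and the controlled growing seminorms then yields geometric decay of the residual in every seminorm, so $x_k$ converges in $X$ to a solution $g(y)$ of $f(g(y)) = y$.

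It remains to promote the local right inverse $g$ to a tame smooth two-sided inverse. Local injectivity of $f$ follows from the same iteration applied to the difference of two putative preimages (or directly from the openness of the image and a uniqueness statement in the iteration). For smoothness, differentiating the identity $f \circ g = \id$ forces $g'_y = \Psi^f_{g(y)}$; this expression is tame smooth in $y$ by composition with the already tame smooth $\Psi^f$ and $g$, and an induction on the order of the derivative, together with a parametrized version of the modified Newton scheme, upgrades continuity of $g$ at each stage to tame smoothness. The full argument is carried out in~\cite[\S III.1]{MR656198}, and we simply invoke it as stated.
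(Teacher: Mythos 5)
The paper does not prove this statement at all: it is quoted verbatim as Hamilton's Nash--Moser inverse function theorem and simply cited from \cite[\S III.1]{MR656198}, which is exactly what you do after your (accurate) outline of the smoothing-operator Newton iteration. Your sketch correctly identifies the loss-of-derivatives obstruction, the role of the smoothing operators, and the deferral to Hamilton for the delicate convergence bookkeeping, so it is consistent with the paper's treatment.
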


\begin{prop}[{\cite[pp.~47ff]{MR2634197}}]  
\label{prop::tameFrechet:extendFiniteDimSplitting}
  Let \( A \) and \( X \) be tame Fr\'echet 
spaces, and let
\( E \subseteq X \) and \( F \subseteq X \) be closed subspaces. 
Suppose that \( E \) is finite-dimensional. Moreover, let 
\( \Phi\colon A \times (E \times F) \to X \) 
be a tame smooth family of linear maps which decomposes into
  \begin{equation}
    \Phi_a (u, v) = \varphi_a(u) + v, \quad a\in A,\ u\in E,\ v \in F,
  \end{equation}
  where \( \varphi\colon A \times E \to X \) is a tame smooth family of 
injective, linear maps. If,
for some \( a_0 \in A \), the partial map
 \( \Phi_{a_0}\colon E \times F \to X \) 
is a linear and topological isomorphism 
\(  E \oplus F \to X\)
then there is an open neighborhood 
\( U \subseteq A \) of \( a_0 \) in $\conn$
such that \( \Phi_a \) is a bijection for every \( a \in U \) 
in such a way that the inverses constitute 
a tame smooth map \( U \times X \to E \times F \).  
\end{prop}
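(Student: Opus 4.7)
The plan is to reduce, via the ambient isomorphism $\Phi_{a_0}$, the family $\{\Phi_a\}$ to a family of linear self-maps on $E \times F$ that is triangular with respect to the decomposition $E \oplus F$. In that triangular form only a finite-dimensional diagonal block requires inversion, so classical linear algebra on $E$ takes over and no Nash--Moser argument is needed at this stage.

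Set $T_a \defeq \Phi_{a_0}^{-1} \circ \Phi_a$; this is a tame smooth family of continuous linear self-maps of $E \times F$ with $T_{a_0} = \mathrm{id}$. Write $\Phi_{a_0}^{-1} = (\pi_E, \pi_F)$ with respect to the splitting of $X$ induced by $\Phi_{a_0}$. From $\Phi_{a_0}(u, 0) = \varphi_{a_0}(u)$ and $\Phi_{a_0}(0, v) = v$ one reads off $\pi_E \circ \varphi_{a_0} = \mathrm{id}_E$, $\pi_F \circ \varphi_{a_0} = 0$, $\pi_E|_F = 0$ and $\pi_F|_F = \mathrm{id}_F$, and therefore
\[
T_a(u, v) = \bigl(\alpha_a(u),\, \beta_a(u) + v\bigr), \qquad \alpha_a \defeq \pi_E \circ \varphi_a, \quad \beta_a \defeq \pi_F \circ \varphi_a,
\]
with $\alpha_{a_0} = \mathrm{id}_E$ and $\beta_{a_0} = 0$. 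Solving this lower-triangular system gives the unique candidate inverse
\[
T_a^{-1}(u', v') = \bigl(\alpha_a^{-1}(u'),\; v' - \beta_a\bigl(\alpha_a^{-1}(u')\bigr)\bigr).
\]

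Because $E$ is finite-dimensional, $a \mapsto \alpha_a$ is a tame smooth map from $A$ into the finite-dimensional vector space $\mathrm{End}(E)$, and matrix inversion is a smooth operation on the open set $\mathrm{GL}(E) \subseteq \mathrm{End}(E)$. Hence $U \defeq \alpha^{-1}(\mathrm{GL}(E))$ is an open neighborhood of $a_0$ in $A$ on which $a \mapsto \alpha_a^{-1}$ is tame smooth. Composing the displayed formula with the continuous linear map $\Phi_{a_0}^{-1}$ then produces the explicit candidate
\[
\Psi(a, x) \defeq T_a^{-1}\bigl(\Phi_{a_0}^{-1}(x)\bigr), \qquad (a, x) \in U \times X,
\]
which by construction is a two-sided inverse of $\Phi_a$ for every $a \in U$.

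The one point deserving care is the tame smoothness of the map $(a, u) \mapsto \beta_a(u)$ when composed with a tame smooth $E$-valued input. Fixing a basis $e_1, \dots, e_n$ of $E$ reduces $\beta_a(u)$ to the finite sum $\sum_i \lambda_i(u)\,\beta_a(e_i)$, with $\lambda_i$ the dual coordinate functionals on $E$, and tame smoothness of the scalar coefficients $\lambda_i\bigl(\alpha_a^{-1}(\pi_E \Phi_{a_0}^{-1}(x))\bigr)$ together with that of the $F$-valued functions $a \mapsto \beta_a(e_i)$ is then routine. The genuine obstacle one might have feared --- a perturbative inversion on the infinite-dimensional space $X$ --- is thus side-stepped entirely by the hypothesis that $E$ is finite-dimensional, and the Nash--Moser theorem invoked in the preceding proposition does not re-enter this proof.
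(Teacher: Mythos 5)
Your argument is correct. The paper itself supplies no proof of this proposition --- it is quoted from Subramaniam's thesis --- so there is no internal argument to compare against; but your reduction to the triangular family \( T_a = \Phi_{a_0}^{-1}\circ\Phi_a \), \( T_a(u,v)=(\alpha_a(u),\,\beta_a(u)+v) \), is exactly the right way to exploit the finite-dimensionality of \( E \): the explicit inverse \( T_a^{-1}(u',v')=(\alpha_a^{-1}(u'),\,v'-\beta_a(\alpha_a^{-1}(u'))) \), the openness of \( \mathrm{GL}(E) \) in \( \mathrm{End}(E) \), and the smoothness of matrix inversion give both the bijectivity on a neighbourhood \( U \) of \( a_0 \) and the candidate inverse family, with no perturbative inversion on \( X \) and hence no Nash--Moser input --- which is as it should be, since this proposition is itself an ingredient of the Nash--Moser argument in \cref{prop::lieGroup:finiteDimSubgroupIsStrongSplittingLieSubgroup}. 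The one point you pass over a little quickly is the tameness (not just continuity) of \( \Phi_{a_0}^{-1} \), i.e.\ of \( \pi_E \) and \( \pi_F \): in the tame category a continuous linear map need not be tame. Here this is harmless: \( \pi_E \) has finite rank, so \( \pi_E(x)=\sum_i \mu_i(x)\,e_i \) for continuous linear functionals \( \mu_i \) on \( X \), which are automatically tame because each is bounded by a single seminorm; and \( \pi_F=\mathrm{id}_X-\varphi_{a_0}\circ\pi_E \) is then tame as well. With that remark added, the tame smoothness of \( \Psi(a,x)=\bigl(\alpha_a^{-1}\pi_E(x),\ \pi_F(x)-\beta_a(\alpha_a^{-1}\pi_E(x))\bigr) \) follows exactly as you indicate, by expanding \( \beta_a \) over a basis of \( E \) and using that products of tame smooth scalar functions with tame smooth \( F \)-valued curves \( a\mapsto\beta_a(e_i) \) are tame smooth.
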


\subsection{Definition of a slice}
In finite  dimensions, a smooth retraction and a tubular 
neighborhood are equivalent to the existence of a slice, 
see~\cite[Theorem~2.3.26]{MR2021152}.
However, the proof relies on the inverse 
function theorem; indeed, this theorem entails that
an infinitesimal splitting generates a local product decomposition. 
This argument 
does not carry over to arbitrary 
infinite-dimensional manifolds. 
For this reason and for later reference, we  now make precise 
the notion of slice we subsequently use; 
 this is the strongest concept 
of a slice
we
 could possibly think of. 

\begin{defn}
  Let \( \Upsilon\colon G \times \GGGG \to \GGGG \) be a smooth action of a 
Lie group \( G \) on a manifold \( \GGGG \). 
Let $q$ be a point of $\GGGG$ and
suppose that the stabilizer 
\( G_q \subseteq G \) at \( q \in \GGGG \) is a principal Lie subgroup. 
A \emph{slice} at \( q \in \GGGG \) is a submanifold \( S \) 
of \( \GGGG \) that contains \( q \) in such a way that the following
conditions are met:
  \begin{enumerate}[label=(S\arabic*),ref=(S\arabic*)]
    \item \label{i::slice:SliceDefSliceSweepIsOpen} 
      The $G$-closure 
\( G \cdot S \) of $S$ is an open neighborhood of the orbit \( G \cdot q \) 
in $\GGGG$ and \( S \) is closed in \( G \cdot S \). 

    \item \label{i::slice:SliceDefSliceInvariantUnderStab}
      The submanifold 
\( S \) is closed under the induced action of \( G_q \) in the sense that
$G_q \cdot S \subseteq S$. 

    \item \label{i::slice:SliceDefOnlyStabNotMoveSlice}
      Any $x \in G$ such that 
\( (x\cdot S) \cap S \neq \emptyset \) necessarily lies in \( G_q \). 

    \item \label{i::slice:SliceDefLocallyProduct}
     The smooth submersion $G \to G/G_q$ admits
      a local section \( \chi\colon  U \to G \) defined on an open 
neighborhood \( U \subseteq G/G_q\) of the identity coset in such a way 
     that the map
      \begin{equation*}
        \chi^S\colon U \times S \to \GGGG, \qquad (\equivClass{x}, y) \mapsto \chi(\equivClass{x}) \cdot y,\ x\in U,\ y\in S,
      \end{equation*}
      is a diffeomorphism onto an open neighborhood \( V  \) of  
\( q \) in $\GGGG$. 
  \end{enumerate}
A Lie group action \( \Upsilon\colon G \times \GGGG \to \GGGG \) 
is said to \emph{admit a slice at} \( q \in \GGGG \) if the stabilizer \( G_q \) 
of $q$ in $G$ 
is a principal Lie subgroup of \( G \) and there is a slice at \( q \in \GGGG \).
\end{defn}

\begin{prop}
\label{ps}
  Let \( \Upsilon\colon G \times \GGGG \to \GGGG \) be a smooth Lie group action 
admitting a slice \( S \) at the point \( q \) of $\GGGG$. The following hold: 
  \begin{enumerate}
    \item[{\rm(1)}] \label{prop::slice:mHasMaximalStabilizerOfWholeSlice} 
      Given \( y \in S \), the stabilizer \( G_y\) of $y$ is a subgroup
of the stabilizer \( G_q \) of $q$. That is, the point 
\( q \) has the maximal 
stabilizer of the whole slice.
    
    \item [{\rm(2)}]
      The point $q$ has a neighborhood \( V \) in \( \GGGG \) such that any 
   point of \( V \) has a stabilizer that is conjugate to a subgroup of 
$G_q$. \qedhere
  \end{enumerate}
\end{prop}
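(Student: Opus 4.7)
The plan is to deduce both claims directly from the defining properties (S1)--(S4) of a slice, without any further analysis.

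For part (1), I would argue as follows. Let $y \in S$ and let $x \in G_y$, so that $x \cdot y = y$. Then $y$ lies in both $x \cdot S$ and $S$, whence $(x \cdot S) \cap S \neq \emptyset$. Property (S3) then forces $x \in G_q$, establishing $G_y \subseteq G_q$.

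For part (2), I would take $V$ to be the open neighborhood of $q$ in $\GGGG$ given by (S4), that is, $V = \chi^S(U \times S)$. Any point of $V$ has the form $z = \chi(\overline x) \cdot y$ for some uniquely determined $\overline x \in U$ and $y \in S$. A straightforward calculation shows that the stabilizer of $z$ equals the conjugate $\chi(\overline x)\, G_y \,\chi(\overline x)^{-1}$. By part (1), $G_y \subseteq G_q$, hence $G_z$ is conjugate in $G$ to a subgroup of $G_q$, as required.

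Neither step poses any real obstacle: part (1) is a direct application of the transversality axiom (S3), and part (2) only uses the local product structure of (S4) together with part (1). The only mild point to verify carefully in (2) is that, in the slice chart, the stabilizer of $\chi(\overline x) \cdot y$ really is $\chi(\overline x)\, G_y\, \chi(\overline x)^{-1}$; this is immediate from the fact that $\Upsilon$ is a group action and requires no infinite-dimensional input.
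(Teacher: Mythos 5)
Your proof is correct and follows essentially the same route as the paper's: part (1) is the direct application of property (S3) to an element fixing a point of $S$, and part (2) uses the neighborhood from (S4), the fact that every point of it is a $G$-translate of a point of $S$, and the equivariance of stabilizers combined with part (1). You merely spell out the slice chart a bit more explicitly than the paper does.
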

\begin{proof}
  The first claim follows directly from property \iref{i::slice:SliceDefOnlyStabNotMoveSlice}. 

  Let \( V \) be a neighborhood of the kind characterized in 
\iref{i::slice:SliceDefLocallyProduct}. Since every \( v \in V \) is 
obtained by translation of the slice \( S \) along the orbit, 
for some \( x \in G \), the point \( x\cdot v\) lies in $S$. Applying the first 
statement
 together with the equivariance of stabilizer subgroups 
we  conclude  that
 \( x G_v x^{-1} \subseteq G_q \) for some $x\in G$.
\end{proof}

\subsection{Slice for the action of a subgroup}
Let \( (\GGGG, G) \) be a \( G \)-manifold,
and let $q$ be a point of $\GGGG$ 
having the property that the action 
admits a slice at \( q  \). In this section we discuss how one can
 construct a slice at \( q \) for the induced action of a 
subgroup \( H \) of \( G \).

\begin{prop} \label{slice::actionOfSubgroup}
  Let \( \GGGG \) be a manifold with a smooth \( G \)-action \( \Upsilon \) 
admitting a slice at \( q \in \GGGG \). Let \( H \subseteq G \) be a normal 
Lie subgroup. If the induced action of \( H \) on \( \GGGG \) is free and the 
product \( G_q H \) is a principal Lie subgroup of \( G \), 
then there exists  a slice at \( q \) for the \( H \)-action as well.
\end{prop}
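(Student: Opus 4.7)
The plan is to build a slice for the $H$-action at $q$ by sweeping the given $G$-slice $S$ along a local section of $G \to G/(G_qH)$. Freeness of the $H$-action forces $H_q = G_q \cap H = \{e\}$, which is trivially a principal Lie subgroup of $H$; moreover \iref{i::slice:SliceDefSliceInvariantUnderStab} becomes vacuous, so only \iref{i::slice:SliceDefSliceSweepIsOpen}, \iref{i::slice:SliceDefOnlyStabNotMoveSlice}, and \iref{i::slice:SliceDefLocallyProduct} need to be verified.

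Since $G_qH$ is a principal Lie subgroup of $G$ by hypothesis, \cref{prop::subliegroup:principalLieSubgroup} furnishes a local section $\chi_H\colon U_H \to G$ of $G \to G/(G_qH)$ at the identity coset such that $(\bar x, g) \mapsto \chi_H(\bar x)\,g$ is a tame smooth diffeomorphism from $U_H \times G_qH$ onto a neighborhood of $e$ in $G$. Combining this with $G_q \cap H = \{e\}$ (which infinitesimally forces $\liea{g}_q \cap \liea{h} = 0$) and \iref{i::slice:SliceDefLocallyProduct} for $S$ yields the splitting $\liea{g} = \TBundle_e \chi_H(U_H) \oplus \liea{g}_q \oplus \liea{h}$; consequently, for a sufficiently small neighborhood $U_H' \subseteq H$ of $e$,
\begin{equation*}
  \Phi\colon U_H \times U_H' \times S \longrightarrow V, \qquad (\bar x_1, h, y) \longmapsto \chi_H(\bar x_1) \cdot h \cdot y,
\end{equation*}
is a tame smooth local diffeomorphism at $(e, e, q)$ onto an open neighborhood $V \subseteq \GGGG$ of $q$. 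The candidate slice is
\begin{equation*}
  S_H \defeq \chi_H(U_H) \cdot S,
\end{equation*}
which corresponds under $\Phi$ to $U_H \times \{e\} \times S \subseteq V$.

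The remaining verifications then proceed as follows. For \iref{i::slice:SliceDefLocallyProduct}, the identity $h' \chi_H(\bar x_1) = \chi_H(\bar x_1)(\chi_H(\bar x_1)^{-1} h' \chi_H(\bar x_1))$—in which the inner conjugate lies in $H$ by normality—shows that $H$-translation of $S_H$ corresponds to varying the $h$-coordinate of $\Phi$, so that $(h', s) \mapsto h' \cdot s$ is a diffeomorphism from $U_H'' \times S_H$ onto $V$ for an appropriate $U_H'' \subseteq H$. Property \iref{i::slice:SliceDefSliceSweepIsOpen} follows at once from $H \cdot S_H \supseteq V$. For \iref{i::slice:SliceDefOnlyStabNotMoveSlice}, suppose $h' \cdot \chi_H(\bar x_1) y = \chi_H(\bar x_1') y'$ with $h' \in H$, $\bar x_1, \bar x_1' \in U_H$, $y, y' \in S$; then $g \defeq \chi_H(\bar x_1')^{-1} h' \chi_H(\bar x_1) \in G$ sends $y \in S$ to $y' \in S$ and therefore lies in $G_q$ by \iref{i::slice:SliceDefOnlyStabNotMoveSlice} for $S$. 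Writing $g = \chi_H(\bar x_1')^{-1} \chi_H(\bar x_1) \cdot \chi_H(\bar x_1)^{-1} h' \chi_H(\bar x_1)$, where the rightmost factor lies in $H$ by normality, we deduce $\chi_H(\bar x_1')^{-1} \chi_H(\bar x_1) \in G_qH$, and the local section property of $\chi_H$ forces $\bar x_1 = \bar x_1'$; hence $g \in G_q \cap H = \{e\}$, whence $h' = e$. The main obstacle is producing the tame smooth parametrization $\Phi$—that is, meshing the product decomposition from \iref{i::slice:SliceDefLocallyProduct} for $S$ with the one from $G_qH$ being principal—in the Fr\'echet setting; the principal Lie subgroup hypotheses supply tame smoothness automatically, while a minor residual subtlety concerning the closedness of $S_H$ in $H \cdot S_H$ is handled by shrinking $S_H$ in the standard fashion.
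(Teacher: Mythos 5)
Your overall strategy is the same as the paper's: the candidate slice is identical (the paper's $\hat S=\sigma(V)\cdot S$ is your $S_H=\chi_H(U_H)\cdot S$, with $\sigma$ the local section coming from $G_qH$ being a principal Lie subgroup), freeness trivializes \iref{i::slice:SliceDefSliceInvariantUnderStab}, and your verification of \iref{i::slice:SliceDefOnlyStabNotMoveSlice} --- reduce to an element of $G_q\cap H=\set{e}$ using \iref{i::slice:SliceDefOnlyStabNotMoveSlice} for $S$, normality of $H$, and injectivity of the tube map for $G_qH$ --- is the paper's computation.

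The genuine gap is in \iref{i::slice:SliceDefLocallyProduct}. You pass from the Lie-algebra splitting $\liea{g}=\liea{k}\oplus\mathrm{Lie}(G_q)\oplus\liea{h}$ to the assertion that $\Phi(\bar x_1,h,y)=\chi_H(\bar x_1)h\cdot y$ is a local diffeomorphism at $(e,e,q)$ with a single ``consequently''. In the tame Fr\'echet category an isomorphism of tangent spaces does not yield a local diffeomorphism: the inverse function theorem is precisely what is unavailable, and avoiding it is the raison d'\^etre of this whole section (note also that $S$ is infinite-dimensional in the intended application, so no finite-dimensional transversality argument applies, and a Nash--Moser argument would require exhibiting a tame family of inverses, which you do not do). The paper instead \emph{constructs} the local product structure as a composite of maps already known to be diffeomorphisms: it shrinks $V$ and chooses a conjugation-invariant neighbourhood $V_H\subseteq H$ of $e$ with $\mu(V,V_H)\subseteq\nu(U,e)$, where $\nu(X,w)=\chi(X)w$ is the $G_q$-tube furnished by \iref{i::slice:SliceDefLocallyProduct} for $S$; then $\lambda=\nu^{-1}\circ\mu$ lands in $U\times\set{e}$, and the map $(v,\sigma(X)\cdot y)\mapsto v\sigma(X)\cdot y$ factors through $\mu^{-1}$, $\lambda$, and $\chi^S$. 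The point of the condition $\mu(V,V_H)\subseteq\nu(U,e)$ is that when you rewrite $\chi_H(\bar x_1)h$ as $\chi(\lambda)w$ with $w\in G_q$, you are guaranteed $w=e$, so no uncontrolled $G_q$-translation of the $S$-variable enters and injectivity and smoothness of the inverse come for free from the constituent maps. Your $\Phi$ can be legitimized exactly this way, but as written the key step is asserted, not proved; everything downstream of it (the conjugation trick identifying $H$-translation with motion in the $h$-coordinate, openness of $H\cdot S_H$, and \iref{i::slice:SliceDefOnlyStabNotMoveSlice}) is sound.
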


In the gauge theory context in \cref{frechet} above, 
we apply this proposition with
$G$ being the group  $\Gau$  of all gauge transformations and 
\( H \) the subgroup  \( \GauQ \) of based gauge transformations
with respect to the chosen base point $Q$ of the manifold written
in \cref{frechet} as
$\GGGG$ (but presently the notation $\GGGG$ refers to a $\GG$-manifold). 
It is a standard fact that \( \GauQ \) acts freely on the space of 
connections and that it is a normal
subgroup of \( \Gau \). 
By \cref{prop::lieGroup:finiteDimSubgroupIsStrongSplittingLieSubgroup},
$\GauQ$ is a principal Lie subgroup of $\Gau$.
Furthermore, 
given a smooth connection $\conn$ on $\prin$,
the stabilizer  $\GauA$
of $\conn$ in $\Gau$
is finite-dimensional, and
the group \( \GauA \GauQ \) 
has finite codimension in $\Gau$ (indeed, \( \dim(G) - \dim(\GauA) \)) 
and hence, by 
\cref{prop::lieGroup:finiteDimSubgroupIsStrongSplittingLieSubgroup},
the group \( \GauA \GauQ \) 
is a principal Lie subgroup of \( \Gau \).

\begin{proof}
  Let \( S \) be a slice at \( q \in \GGGG \) for the \( G \)-action. 
The group
\( G_q H \) is a principal Lie subgroup of \( G \) by assumption. Hence there 
is a closed subspace \( \liea{t}_q \) of \( \liea{g} \), an open neighborhood 
$V$ of $0$ in \(\liea{t}_q \),  and a smooth map 
\( \sigma\colon V \to G \) with \( \sigma(0) = e \) such that the map
  \begin{equation*}
    \mu\colon V \times G_q H \to G, \qquad (X, xy) \mapsto \sigma(X) \, xy,
\ x\in G_q,\ y \in H, 
  \end{equation*} 
  is a diffeomorphism onto an open subset of \( G \). 
Since $H$ is a normal subgroup of $G$,
\[
 H \sigma(V) = \sigma(V) H = \mu(V, H).
\]
  We claim that \( \hat{S} = \sigma(V) \cdot S \) is a slice for the 
\( H \)-action. Indeed, \( \hat{S} \) is a submanifold of 
\( \GGGG \) containing \( q \) since,
in view of 
\cref{ps} (1),
for any point 
\( y \) of \( S \), 
the stabilizer \( G_y \) of $y$ is a subgroup of \( G_q\).
Consequently the restriction of the action 
\( \Upsilon \) to \( \sigma(V) \times S \) is a diffeomorphism onto 
\( \hat{S} \); 
we note that \( \sigma(X) \) lies in \( G_q \) 
only for the trivial element \( X = 0\). Furthermore, all 
defining properties of a slice are 
met:

\noindent
(\^S1) The $H$-closure 
   \( H \cdot \hat{S} = H \sigma(V) \cdot S = \mu(V, H) \cdot S \) 
     is open in \( \GGGG \) since \( G \cdot S \subseteq \GGGG \) is open and 
     since \( V \) is an open subset of \( \liea{t}_q \). 
      Furthermore, since
       \( V \times \set{e} \) is closed in \( V \times H \), 
      the space \( \hat{S} = \mu(V, e) \cdot S \) 
       is closed in \( H \cdot \hat{S} = \mu(V, H) \cdot S \).

\noindent
(\^S2) The manifold 
      \( \hat{S} \) is clearly invariant under the trivial stabilizer \( H_q = \set{e} \).

\noindent
(\^S3) Let  \( x \in H \) and \( y \in \hat{S} \) such that \( x \cdot y \) 
lies in \( \hat{S} \) again. We have to show \( x = e \). By the construction 
  of \( \hat{S} \) there are \( X, X' \in V \) 
   and \( s, s' \in S \) with \( x \sigma(X) \cdot s = \sigma(X') \cdot s' \).
 Now property \iref{i::slice:SliceDefOnlyStabNotMoveSlice} for the 
\( G \)-slice \( S \) implies that, for some \( w \in G_q \),  
      \begin{equation*}
        x \sigma(X) = \sigma(X) \sigma(X)^{-1} h \sigma(X) = \sigma(X') w.
      \end{equation*}
      Since \( H \) is a normal subgroup of \( G \), the element 
\( \tilde{x} \defeq \sigma(X)^{-1} x \sigma(X) \) lies in 
\( H \) again and hence 
      \begin{equation*}
        \mu(X, e) = \sigma(X) = \sigma(X') w \tilde{y}^{-1} 
= \mu(X', w \tilde{y}^{-1}).
      \end{equation*}
      The map \( \mu\colon V \times G_q H \to G \) is injective and 
thus yields \( X = X' \) and \( w = e\), \(\tilde{x} = e \). Consequently \( x = e \).

\noindent
(\^S4) Since \( S \) is a slice, there is a smooth map 
    \( \chi\colon  U \to G \) 
defined on an open neighborhood \( U \) of \( 0 \) 
in $\liea{t} \times \liea{h}$
such that the maps
      \begin{equation*}\begin{split}
        &\chi^S\colon U \times S \to \GGGG, \qquad (X, y) 
          \mapsto \chi(X) \cdot y,\ X\in U,\ y \in S, \\
        &\nu\colon U \times G_q \to G, \qquad (X, y) \mapsto \chi(X) y,
\  X\in U,\ y \in G_q,
      \end{split}\end{equation*}
      are diffeomorphisms onto open neighborhoods \( W_\GGGG \) 
of \( q \) in $\GGGG$ and \( W_G\) of \( e \) in $G$, respectively. 
Shrink \( V \) (and hence the slice \( \hat{S} \)) and choose an open 
and conjugation invariant  subset \( V_H \) of \( H \) such that 
\( \mu(V, V_H) \subseteq \nu(U, e) \). Then the map
\( \lambda\colon V \times V_H \mapsto U \) defined by 
\( {\lambda = \nu^{-1} \circ \mu} \) 
is a diffeomorphism onto an open subset of \( U \).

      Finally, the map 
      \begin{equation*}
        \eta\colon V_H \times \hat{S} \to \GGGG, 
\qquad (v, \sigma(X) \cdot y) \mapsto v \sigma(X) \cdot y,
\ 
v \in  V_H,\ 
X \in V,\ 
y \in S,
      \end{equation*}
      is a diffeomorphism onto an open neighborhood of \( q \) in \( \GGGG \).
 Indeed, this map can be written as the composite 
      \begin{equation*}\begin{tikzcd}
          V_H \times \hat{S} 
            \arrow{r}{\id \times \mu^{-1}} 
        & V_H \times V \times S
            \arrow{r}{\lambda \times \id}
        & U \times S
            \arrow{r}{\chi^S}
        & W_\GGGG
      \end{tikzcd}\end{equation*}
made explicit by the association
      \begin{equation*}\begin{tikzcd} 
         (v, \sigma(X) \cdot y) 
          \arrow[mapsto]{r}{}
        & (v, X, y) \arrow[mapsto]{r}{}
        & (\nu^{-1} \circ \mu(v, X), y) \arrow[mapsto]{r}{}
        & \mu(v, X) \cdot y,
      \end{tikzcd}\end{equation*}
where 
$v \in  V_H$, $X \in V$, $y \in S$.
\end{proof}

\section*{Acknowledgements}

We are indebted to B. Tumpach and A. Waldron for discussion.
We gratefully acknowledge support by the CNRS, by the
Labex CEMPI (ANR-11-LABX-0007-01), 
by the German National Academic Foundation
(DAAD), and by the Max Planck Institute for Mathematics in the Sciences
(Leipzig).

\bibliographystyle{alpha}

\begin{thebibliography}{ACMM86}

\bibitem[AB82]{atibottw}
Michael~F. Atiyah and Raoul Bott.
\newblock The {Y}ang-{M}ills equations over {R}iemann surfaces.
\newblock {\em Phil. Trans. R. Soc. London A}, 308:523--615, 1982.

\bibitem[ACM89]{AbbatiCirelliEtAl1989}
Maria~C. Abbati, Renzo Cirelli, and Alessandro Manià.
\newblock The orbit space of the action of gauge transformation group on
  connections.
\newblock {\em J. Geom. Phys.}, 6(4):537--557, 1989.

\bibitem[ACMM86]{AbbatiCirelliEtAl1986}
Maria~C. Abbati, Renzo Cirelli, Alessandro Manià, and Peter Michor.
\newblock Smoothness of the action of the gauge transformation group on
  connections.
\newblock {\em J. Math. Phys.}, 27(10):2469--2474, 1986.

\bibitem[AMM98]{MR1638045}
Anton Alekseev, Anton Malkin, and Eckhard Meinrenken.
\newblock Lie group valued moment maps.
\newblock {\em J. Differential Geom.}, 48(3):445--495, 1998.

\bibitem[Ati57]{MR0086359}
Michael~F. Atiyah.
\newblock Complex analytic connections in fibre bundles.
\newblock {\em Trans. Amer. Math. Soc.}, 85:181--207, 1957.

\bibitem[Bry95]{MR1338391}
Robert~L. Bryant.
\newblock An introduction to {L}ie groups and symplectic geometry.
\newblock In {\em Geometry and quantum field theory ({P}ark {C}ity, {UT},
  1991)}, volume~1 of {\em IAS/Park City Math. Ser.}, pages 5--181. Amer. Math.
  Soc., Providence, RI, 1995.

\bibitem[Che95]{MR2244174}
Shiing-Shen Chern.
\newblock {\em Complex manifolds without potential theory (with an appendix on
  the geometry of characteristic classes)}.
\newblock Universitext. Springer-Verlag, New York, second edition, 1995.

\bibitem[CM85]{MR814813}
Renzo Cirelli and Alessandro Mani{\`a}.
\newblock The group of gauge transformations as a {S}chwartz-{L}ie group.
\newblock {\em J. Math. Phys.}, 26(12):3036--3041, 1985.

\bibitem[Die69]{MR0349288}
Jean A.~E. Dieudonn\'e.
\newblock {\em Foundations of modern analysis}.
\newblock Academic Press, New York-London, 1969.
\newblock Enlarged and corrected printing, Pure and Applied Mathematics, Vol.
  10-I.

\bibitem[Die13]{diez2013}
Tobias Diez.
\newblock {\em Slice theorem for {F}r\'echet group actions and covariant
  symplectic field theory, Master Thesis--{L}eipzig {U}niversity}.
\newblock {L}eipzig {U}niversity, 2013.
\newblock {\tt ARXIV:1405.2249}.

\bibitem[GN]{Gloecknernneeb2013}
Helge Gl\"ockner and Karl-Hermann Neeb.
\newblock {\em Infinite-dimensional Lie Groups: Geometry and Topology}.
\newblock Springer-Verlag New York Inc.
\newblock In preparation.

\bibitem[Gru70]{MR0279200}
Karl~W. Gruenberg.
\newblock {\em Cohomological topics in group theory}.
\newblock Lecture Notes in Mathematics, Vol. 143. Springer-Verlag, Berlin-New
  York, 1970.

\bibitem[GS84]{MR770935}
Victor Guillemin and Shlomo Sternberg.
\newblock {\em Symplectic techniques in physics}.
\newblock Cambridge University Press, Cambridge, 1984.

\bibitem[H\"07]{MR2304165}
Lars H\"ormander.
\newblock {\em The analysis of linear partial differential operators. {III}:
  Pseudo-differential operators}.
\newblock Classics in Mathematics. Springer-Verlag, Berlin, 2007.
\newblock Reprint of the 1994 edition.

\bibitem[Ham82]{MR656198}
Richard~S. Hamilton.
\newblock The inverse function theorem of {N}ash and {M}oser.
\newblock {\em Bull. Amer. Math. Soc. (N.S.)}, 7(1):65--222, 1982.

\bibitem[Hir95]{hirzeboo}
Friedrich Hirzebruch.
\newblock {\em Topological methods in algebraic geometry}.
\newblock Classics in Mathematics. Springer-Verlag, Berlin, 1995.
\newblock Translated from the German and Appendix One by R. L. E.
  Schwarzenberger; with a preface to the third English edition by the author
  and R. L. E. Schwarzenberger; Appendix Two by A. Borel. Reprint of the 1978
  edition.

\bibitem[HJ94]{MR1277051}
Johannes Huebschmann and Lisa~C. Jeffrey.
\newblock Group cohomology construction of symplectic forms on certain moduli
  spaces.
\newblock {\em Internat. Math. Res. Notices}, (6):245--249, 1994.

\bibitem[Hue95a]{MR1363857}
Johannes Huebschmann.
\newblock The singularities of {Y}ang-{M}ills connections for bundles on a
  surface. {I}. {T}he local model.
\newblock {\em Math. Z.}, 220(4):595--609, 1995.
\newblock {\tt DG-GA/9411006}.

\bibitem[Hue95b]{MR1370113}
Johannes Huebschmann.
\newblock Symplectic and {P}oisson structures of certain moduli spaces. {I}.
\newblock {\em Duke Math. J.}, 80(3):737--756, 1995.
\newblock {\tt HEP-TH/9311212}.

\bibitem[Hue96]{MR1369463}
Johannes Huebschmann.
\newblock The singularities of {Y}ang-{M}ills connections for bundles on a
  surface. {II}. {T}he stratification.
\newblock {\em Math. Z.}, 221(1):83--92, 1996.
\newblock {\tt DG-GA/9411007}.

\bibitem[Hue98]{MR1600534}
Johannes Huebschmann.
\newblock Smooth structures on certain moduli spaces for bundles on a surface.
\newblock {\em J. Pure Appl. Algebra}, 126(1-3):183--221, 1998.
\newblock {\tt DG-GA/9411008}.

\bibitem[Hue99]{MR1670408}
Johannes Huebschmann.
\newblock Extended moduli spaces, the {K}an construction, and lattice gauge
  theory.
\newblock {\em Topology}, 38(3):555--596, 1999.
\newblock {\tt DG-GA/9505005, DG-GA/9505006}.

\bibitem[Hue01]{MR1938554}
Johannes Huebschmann.
\newblock Singularities and {P}oisson geometry of certain representation
  spaces.
\newblock In {\em Quantization of singular symplectic quotients}, volume 198 of
  {\em Progr. Math.}, pages 119--135. Birkh\"auser, Basel, 2001.
\newblock {\tt MATH.DG/0012184}.

\bibitem[Jef97]{MR1470732}
Lisa~C. Jeffrey.
\newblock Symplectic forms on moduli spaces of flat connections on
  {$2$}-manifolds.
\newblock In {\em Geometric topology ({A}thens, {GA}, 1993)}, volume~2 of {\em
  AMS/IP Stud. Adv. Math.}, pages 268--281. Amer. Math. Soc., Providence, RI,
  1997.

\bibitem[KN63]{MR0152974}
Shoshichi Kobayashi and Katsumi Nomizu.
\newblock {\em Foundations of differential geometry. {V}ol {I}}.
\newblock Interscience Publishers, a division of John Wiley \& Sons, New
  York-London, 1963.

\bibitem[Mil84]{Milnor1984}
John Milnor.
\newblock Remarks on infinite dimensional {L}ie groups.
\newblock In Bryce~S. DeWitt and R.~Stora, editors, {\em Relativity, Groups and
  Topology II: Les Houches Summer School Proceedings}, pages 1007--1059.
  Elsevier Science Ltd, September 1984.

\bibitem[Nee06]{Neeb2006}
Karl-Hermann Neeb.
\newblock Towards a {L}ie theory of locally convex groups.
\newblock {\em Japanese Journal of Mathematics}, 1(2):291--468, 2006.

\bibitem[OR04]{MR2021152}
Juan-Pablo Ortega and Tudor~S. Ratiu.
\newblock {\em Momentum maps and {H}amiltonian reduction}, volume 222 of {\em
  Progress in Mathematics}.
\newblock Birkh\"auser Boston, Inc., Boston, MA, 2004.

\bibitem[Sik72]{MR0467544}
Roman Sikorski.
\newblock {\em Wst\polhk ep do geometrii r\'o\.zniczkowej}.
\newblock Pa\'nstwowe Wydawnictwo Naukowe, Warsaw, 1972.
\newblock Biblioteka Matematyczna, Tom 42. [Mathematics Library, Vol. 42].

\bibitem[SL91]{MR1127479}
Reyer Sjamaar and Eugene Lerman.
\newblock Stratified symplectic spaces and reduction.
\newblock {\em Ann. of Math. (2)}, 134(2):375--422, 1991.

\bibitem[Sub85]{MR2634197}
T.~N. Subramaniam.
\newblock {\em S{lices} {for} {the} {actions} {of} {smooth} {tame} {L}ie
  {groups} ({F}r{\'e}chet, {gauge}, {diffeomorphism}, {N}ash-{M}oser {inverse}
  {function} {theorem})}.
\newblock ProQuest LLC, Ann Arbor, MI, 1985.
\newblock Thesis (Ph.D.)--Brandeis University.

\bibitem[Sub86]{MR849796}
T.~N. Subramaniam.
\newblock Slices for actions of infinite-dimensional groups.
\newblock In {\em Differential analysis in infinite-dimensional spaces
  ({A}lbany, {N}.{Y}., 1983)}, volume~54 of {\em Contemp. Math.}, pages 65--77.
  Amer. Math. Soc., Providence, RI, 1986.

\bibitem[Uhl82]{MR648356}
Karen~K. Uhlenbeck.
\newblock Connections with {$L^{p}$} bounds on curvature.
\newblock {\em Comm. Math. Phys.}, 83(1):31--42, 1982.

\bibitem[Whi34]{MR1501735}
Hassler Whitney.
\newblock Analytic extensions of differentiable functions defined in closed
  sets.
\newblock {\em Trans. Amer. Math. Soc.}, 36(1):63--89, 1934.

\bibitem[Whi57]{MR0095844}
Hassler Whitney.
\newblock Elementary structure of real algebraic varieties.
\newblock {\em Ann. of Math. (2)}, 66:545--556, 1957.

\bibitem[ZVC80]{MR606743}
Heiner Zieschang, Elmar Vogt, and Hans-Dieter Coldewey.
\newblock {\em Surfaces and planar discontinuous groups}, volume 835 of {\em
  Lecture Notes in Mathematics}.
\newblock Springer-Verlag, Berlin, 1980.
\newblock Translated from the German by John Stillwell.

\end{thebibliography}

\def\cprime{$'$} \def\cprime{$'$} \def\cprime{$'$} \def\cprime{$'$}
  \def\cprime{$'$} \def\cprime{$'$} \def\cprime{$'$} \def\cprime{$'$}
  \def\dbar{\leavevmode\hbox to 0pt{\hskip.2ex \accent"16\hss}d}
  \def\cprime{$'$} \def\cprime{$'$} \def\cprime{$'$} \def\cprime{$'$}
  \def\cprime{$'$} \def\Dbar{\leavevmode\lower.6ex\hbox to 0pt{\hskip-.23ex
  \accent"16\hss}D} \def\cftil#1{\ifmmode\setbox7\hbox{$\accent"5E#1$}\else
  \setbox7\hbox{\accent"5E#1}\penalty 10000\relax\fi\raise 1\ht7
  \hbox{\lower1.15ex\hbox to 1\wd7{\hss\accent"7E\hss}}\penalty 10000
  \hskip-1\wd7\penalty 10000\box7}
  \def\cfudot#1{\ifmmode\setbox7\hbox{$\accent"5E#1$}\else
  \setbox7\hbox{\accent"5E#1}\penalty 10000\relax\fi\raise 1\ht7
  \hbox{\raise.1ex\hbox to 1\wd7{\hss.\hss}}\penalty 10000 \hskip-1\wd7\penalty
  10000\box7} \def\polhk#1{\setbox0=\hbox{#1}{\ooalign{\hidewidth
  \lower1.5ex\hbox{`}\hidewidth\crcr\unhbox0}}}
  \def\polhk#1{\setbox0=\hbox{#1}{\ooalign{\hidewidth
  \lower1.5ex\hbox{`}\hidewidth\crcr\unhbox0}}}
  \def\polhk#1{\setbox0=\hbox{#1}{\ooalign{\hidewidth
  \lower1.5ex\hbox{`}\hidewidth\crcr\unhbox0}}}

\end{document}